\documentclass[11pt]{article}

\usepackage[margin=1in]{geometry}
\usepackage{amsmath,amssymb,amsthm,mathtools}
\usepackage{array,booktabs}
\usepackage{microtype}
\usepackage[colorlinks=true,linkcolor=blue,citecolor=blue,urlcolor=blue]{hyperref}

\newtheorem{theorem}{Theorem}[section]
\newtheorem{proposition}[theorem]{Proposition}
\newtheorem{lemma}[theorem]{Lemma}
\newtheorem{corollary}[theorem]{Corollary}
\theoremstyle{definition}

\theoremstyle{remark}
\newtheorem{remark}[theorem]{Remark}

\newcommand{\E}{\mathbb E}
\newcommand{\Pp}{\mathbb P}
\newcommand{\Qq}{\mathbb Q}
\newcommand{\Var}{\operatorname{Var}}

\newcommand{\Gap}{\operatorname{Gap}}
\newcommand{\Om}{\Omega}
\newcommand{\one}{\mathbf 1}
\newcommand{\cD}{\mathcal D}
\newcommand{\cE}{\mathcal E}
\newcommand{\cA}{\mathcal A}
\newcommand{\cB}{\mathcal B}

\newcommand{\ip}[2]{\langle #1,#2\rangle}

\numberwithin{equation}{section}

\title{Optimal State-Space Order for Spectral Gaps\\
of Sliding-Window Occupation Counts}
\author{%
  Yanjin Xiang and Zhihua Zhang\\
  School of Mathematical Sciences, Peking University\\
  {\small\texttt{yjxiang@stu.pku.edu.cn; zhzhang@math.pku.edu.cn}}
}
\date{\today}
\hypersetup{
  pdftitle={Optimal State-Space Order for Spectral Gaps of Sliding-Window Occupation Counts},
  pdfsubject={Optimal state-space order for projected occupation-count kernels},
  pdfkeywords={reversible Markov chain, spectral gap, sliding window,
    occupation count, projected kernel, Poincare inequality}
}

\begin{document}
\maketitle

\begin{abstract}
Let $P$ be an irreducible reversible Markov kernel on a $m$-state space $\Omega$, and denote its right spectral gap $\gamma=1-\lambda_2(P)$.  From a stationary trajectory,
let $K_t$ be the occupation-count vector of the length-$n$ window beginning
at time $t$.  The stationary pair $(K_0,K_1)$ defines a reversible projected
count kernel $\widetilde P_n$.  For every  $m\ge2$, let
\[
 c_m^\star=
 \inf_{\substack{ P,\; n \ge 2}}
 \frac{n\Gap(\widetilde P_n)}{\Gap(P)}.
\]
We prove
\[
 \frac1{1080m}\le c_m^\star\le q_{m-2},
 \qquad
 q_0=\frac14,\quad q_{r+1}=q_r(1-q_r).
\]
We also show $q_{m-2}=(m+\log m+O(1))^{-1}$, which implies that
$c_m^\star=\Theta(m^{-1})$.  Thus, the optimal comparison coefficient has order
$m$, although its exact value remains open.  The lower bound also holds for
$n=1$ and is uniform in $P$, including sparse and periodic kernels.  Its
proof combines short-window decorrelation with an averaged anchor-excursion
decomposition, a Green-kernel hitting estimate, and a stopped
Carleson--Hardy inequality.  A nested rare-state construction produces
finite $m$-state witnesses whose normalized Rayleigh quotients approach
$q_{m-2}$ through an ordered sequence of limits.  For every fixed finite
irreducible reversible aperiodic kernel on at least two states,
$\Gap(\widetilde P_n)=\Theta_P(n^{-1})$.
\end{abstract}

\noindent\textbf{Keywords.}
Reversible Markov chain; spectral gap; sliding window; occupation count;
projected kernel; Poincar\'e inequality.

\medskip
\noindent\textbf{2020 Mathematics Subject Classification.}
60J10, 60J55, 60G10.

\section{Introduction}
\label{sec:introduction}

Let $P$ be an irreducible reversible Markov kernel on a finite $m$-state space
$\Om$, and let $\pi$ be its stationary law.  We work with a
stationary two-sided realization $(X_t)_{t\in\mathbb Z}$. The corresponding length-$n$
occupation-count vector is defined as
\[
 K_t :=\sum_{j=0}^{n-1}e_{X_{t+j}},
\]
where $e_x$ is the coordinate vector indexed by $x\in\Om$.  The ordered
window $(X_t,\ldots,X_{t+n-1})$ contains its transition history, while
$K_t$ remembers only how often each state occurs.  Sliding the window one
step removes $e_{X_t}$ and adds $e_{X_{t+n}}$.  This produces a stationary
process on the finite count simplex
\[
 \mathsf S_{m,n} :=\left\{k\in\mathbb Z_+^m:
                  \sum_{x\in\Om}k_x=n\right\}.
\]

The count process is generally not Markov: the present histogram does not
determine the order of the hidden path and hence does not determine the law
of the departing state.  We study its stationary one-step projection
instead. Letting $\mu_n$ be the law of $K_0$, we define $\widetilde P_n$ by
\[
 \mu_n(k)\widetilde P_n(k,\ell)
 =\Pp(K_0=k,K_1=\ell).
\]
It is a Markov kernel on the support of $\mu_n$.  Reversal of a stationary
path interchanges $K_0$ and $K_1$, so reversibility of the original chain
implies reversibility of $\widetilde P_n$.  The distinction
between a one-step projected kernel and a strongly lumpable projection is
classical; see Kemeny and Snell~\cite{KemenySnell}.

In this paper we would address the question: how much relaxation is lost
when an ordered Markov path is compressed to its occupation counts?  The
natural comparison scale is $\Gap(P)/n$, but the compression from ordered
paths to counts is both severe and nonlocal.  When $m=2$ the
count space is one-dimensional and $\widetilde P_n$ is a birth--death
kernel; its stationary law is a Markov-binomial distribution, whose shape
can already be nontrivial~\cite{DekkingKong2011}, and discrete Hardy criteria
are available; compare Miclo~\cite{Miclo1999}.  For $m\ge3$ the support lies in a higher-dimensional
simplex, and neither the birth--death order nor its one-dimensional
conductance formulas survive.  Estimates based on strict positivity or a
one-step contraction coefficient also deteriorate as transition
probabilities approach zero, even when the spectral gap of $P$ remains
bounded away from zero.

Our main result is uniform over the entire class of irreducible reversible
kernels on a fixed number of states.  In particular, it allows zero
transition probabilities, eigenvalues close to $-1$, and stationary masses
that approach the boundary of the probability simplex.

\subsection{Notation and conventions}

Throughout this paper, let $m=|\Om|$ and $\mathbb Z_+=\{0,1,2,\ldots\}$.  We write
$\Pp$ and $\E$ for probability and expectation under the stationary
two-sided chain, and $\Pp_x$ and $\E_x$ for the conditional law and
expectation given $X_0=x$; on nonnegative times this is the law of the chain
started from $x$.  Subscripts on variances have the analogous meaning.  For
functions $f,g:\Om\to\mathbb R$, define
\[
 \ip{f}{g}_\pi=\sum_{x\in\Om}\pi_xf(x)g(x),
 \qquad
 \Var_\pi(f)=\ip{f-\pi(f)}{f-\pi(f)}_\pi.
\]
Denote the eigenvalues of $P$ in $L^2(\pi)$ as
\[
 1=\lambda_1>\lambda_2\ge\cdots\ge\lambda_m\ge-1,
 \qquad \gamma=1-\lambda_2.
\]
Here and later, ``$\Gap$'' means this right spectral gap; no absolute-gap or
laziness assumption is imposed.

For a fixed window length $n$, let $\mu_n=\operatorname{Law}(K_0)$ and
$\mathsf S_n=\operatorname{supp}(\mu_n)$.  Count functions are understood to
be defined on $\mathsf S_n$.  For such a function $F$, we write
\[
 H_t=F(K_t),\qquad
 \Delta_t=H_{t+1}-H_t,\qquad
 \cD_r(F)=\frac12\E(H_r-H_0)^2\quad(r\ge1).
\]
Thus $\cD_1(F)$ is the Dirichlet form of $\widetilde P_n$.  All state spaces,
variances, Dirichlet forms, and spectral gaps associated with
$\widetilde P_n$ are restricted to $\mathsf S_n$; a singleton support is
assigned gap $1$.  Finally,
\[
 T_v=\inf\{t\ge0:X_t=v\},
 \qquad
 \tau_v^+=\inf\{t\ge1:X_t=v\},
\]
denote respectively the entrance and strict return times, and $\one_E$
denotes the indicator of an event $E$.

\subsection{Main results}

To quantify the dependence on the number of states, define the optimal
normalized gap constant
\begin{equation}
 c_m^\star=
 \inf_{\substack{|\Om|=m,\ P\ \mathrm{irreducible\ and\ reversible}\\
                  n\ge2}}
 \frac{n\Gap(\widetilde P_n)}{\Gap(P)}.
 \label{eq:best-cm}
\end{equation}
It is often convenient to use the reciprocal optimal comparison cost
\begin{equation}
 C_m^{\mathrm{opt}}=(c_m^\star)^{-1}.
 \label{eq:best-Cm}
\end{equation}

\begin{theorem}[Optimal state-space order]
\label{thm:intro-main}
For every integer $m\ge2$,
\begin{equation}
 \boxed{
 \frac1{1080m}\le c_m^\star\le q_{m-2},
 }
 \label{eq:intro-main-order}
\end{equation}
where
\begin{equation}
 q_0=\frac14,
 \qquad
 q_{r+1}=q_r(1-q_r)\quad(r\ge0).
 \label{eq:intro-q-recurrence}
\end{equation}
Moreover,
\begin{equation}
 q_{m-2}\le\frac1{m+2},
 \qquad
 q_{m-2}=\frac1{m+\log m+O(1)}\quad(m\to\infty).
 \label{eq:intro-q-asymptotic}
\end{equation}
Consequently,
\begin{equation}
 c_m^\star=\Theta(m^{-1}),
 \qquad
 C_m^{\mathrm{opt}}=\Theta(m).
 \label{eq:intro-sharp-order}
\end{equation}
\end{theorem}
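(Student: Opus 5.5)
This theorem is really three separate statements: a universal lower bound, an explicit family of witnesses for the upper bound, and elementary asymptotics of the recursion \eqref{eq:intro-q-recurrence}. I would prove them in reverse order of difficulty. The asymptotics in \eqref{eq:intro-q-asymptotic} follow from the reciprocal recursion
\[
 \frac1{q_{r+1}}=\frac1{q_r}+1+\frac{q_r}{1-q_r}.
\]
Since $1/q_0=4$ and $q_r\le 1/4$, this gives $1/q_r\ge r+4$, i.e.\ $q_{m-2}\le 1/(m+2)$. Iterating, and using $q_r\le 1/(r+4)$, gives
\[
 \frac1{q_r}=r+4+\sum_{s<r}\frac{q_s}{1-q_s}.
\]
The sum is $\sum_{s<r}(s+O(\log s))^{-1}=\log r+O(1)$, so $1/q_{m-2}=m+\log m+O(1)$. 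Together with the two-sided bound, $c_m^\star=\Theta(m^{-1})$ and $C_m^{\mathrm{opt}}=\Theta(m)$ follow immediately.

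For the upper bound $c_m^\star\le q_{m-2}$, I would argue by induction on $m$ using a nested rare-state construction.
\begin{itemize}
\item \emph{Base case $m=2$.} Take a two-state chain. The count kernel is then a birth--death chain on $\{0,\dots,n\}$. Using a test function $F(k)=k_x$ (or a suitably tilted version) and letting the parameters degenerate, the normalized Rayleigh quotient $n\cD_1(F)/(\Gap(P)\Var(F))$ should tend to $1/4$. A natural candidate is a chain with a rare state and $n\to\infty$ after the rarity parameter goes to zero, so that the count is essentially a Bernoulli-mixture with holding times.
\item \emph{Inductive step $m\to m+1$.} Adjoin a new state that is even rarer and attached to the existing witness, and take the limits in a fixed order. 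The new test function should be a combination $F_{\rm new}=F_{\rm old}+a\,G$, where $G$ detects the new state. Optimizing over $a$ should produce the multiplicative loss factor $1-q$, which yields $q_{r+1}=q_r(1-q_r)$. Here I would only compute Rayleigh quotients, not exact gaps.
\end{itemize}

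The lower bound $n\Gap(\widetilde P_n)\ge\Gap(P)/(1080m)$ is the main obstacle. For an arbitrary count function $F$, I need to show
\[
 \Var(H_0)\le 1080\,m\,n\,\gamma^{-1}\,\cD_1(F).
\]
I would proceed in three steps.
\begin{itemize}
\item \emph{Short-window decorrelation.} Comparing $K_0$ with $K_n$ (disjoint windows) bounds $\Var(H_0)$ by $\cD_n(F)$ plus a term controlled by the covariance decay $\lambda^n$. Together with $\cD_r\le r^2\cD_1$ naively (too lossy) or, better, a telescoping with weights, this reduces to controlling $\Var$ via increments along $r\lesssim 1/\gamma$.
\item \emph{Anchor-excursion decomposition.} To avoid losing $n$, I would fix an anchor state $v$ and decompose the path into excursions from $v$. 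I would then average over the choice of anchor weighted by $\pi$, which is where a factor $m$ should appear. Each increment $\Delta_t$ corresponds to swapping $e_{X_t}$ for $e_{X_{t+n}}$. Within an excursion, a Green-kernel bound $\E_x T_v\le\gamma^{-1}/\pi_v$-type estimate should control how long one must wait.
\item \emph{Stopped Carleson--Hardy inequality.} This weighted discrete Hardy inequality converts the sums of increments along excursions into $\cD_1$. It is applied along stopped paths and must be uniform in sparse and periodic kernels. I expect it to be the hardest part, since constants must not depend on $\min P(x,y)$ or $\lambda_m$.
\end{itemize}
Tracking constants through these three steps should give $1080$.
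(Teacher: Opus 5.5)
Your treatment of the recursion matches the paper's, but neither inequality in \eqref{eq:intro-main-order} is actually proved.

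\textbf{Upper bound.} The one concrete test you name cannot work. When $m=2$, every function on $\Om$ is affine in the nonconstant eigenfunction, so $F(k)=k_x$ is an affine image of the linear statistic of Lemma~\ref{lem:linear-upper}. Its normalized quotient is therefore exactly $n(1-\lambda^n)/\{(1-\lambda)D_n(\lambda)\}$, which never drops below $\tfrac12$ (it tends to $1/(1+\lambda)$ as $n\to\infty$). The constant $\tfrac14$ is a Hardy-type constant. It appears only for a rare state with holding probability $s=\rho^2$, taking $\eta\downarrow0$ and then $n\to\infty$, with the tilted test $f(k)=x^k$ for $k\ge1$, $f(0)=0$, at the critical tilt $x\uparrow1/\rho$. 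With $a=1-s$, the normalized leading quotient is $(1+s-2sx)/a^2\to(1+\rho)^{-2}$.

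Your inductive step is only an ansatz. In the paper, the factor $1-q$ comes from three ingredients: nested levels linked by upward rates $\varepsilon_i\to0$; a multiplicative test $\prod_i x_i^{k_i}$ that vanishes when the deepest count is zero; and shallow tilts $x_j=1+ay_j$. Together these give $\Phi_r=y_1^2+(1-2y_1)\Phi_{r-1}(y_2,\ldots,y_r)$. So the new coordinate is the \emph{shallowest} level, and the previous optimizer is pushed one level deeper. Nothing in your proposal shows that an additive correction $F_{\mathrm{old}}+aG$ at a rarer state yields this recursion. In addition, the ordered singular limits, including the non-square-summable critical product, must still be converted into admissible finite kernels, windows, and finitely supported tests.

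\textbf{Lower bound.} You list the paper's tools but none of the mechanisms that make them fit together, so there is nothing for ``tracking constants'' to track.

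(a) A regime split at $n\gamma=m$ is essential. The anchor estimates carry terms $\mathsf H_*^2\cD_1(F)$ with $\mathsf H_*=3m/\gamma$, and these are $O(mn/\gamma)\,\cD_1(F)$ only when $n\gamma>m$. For $n\gamma\le m$, the telescoping $\cD_n\le n^2\cD_1$ that you call too lossy is exactly what is used, together with $\cD_n(F)\ge c_0\min\{1,n\gamma\}\Var(H_0)$. That bound does not come from covariance decay. The adjacent blocks are coupled through a single edge, and for centered $F$ one has $\mathrm{Cov}(H_0,H_n)=\ip{h}{Ph}_\pi$. The factor $1-\lambda_2^n$ comes instead from the two-endpoint identity $\E[H_0\mid X_0=x]=\E[H_0\mid X_{n-1}=x]$ and Cauchy--Schwarz applied to $h_+(X_0)+h_+(X_{n-1})$.

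(b) The factor $m$ does not arise from $\pi$-averaging over anchors. It arises from restricting to heavy anchors $\pi_v\ge1/(2m)$ and from the uniform bound $\max_x\E_xT_v\le(\pi_v^{-1}+m-2)/\gamma$, obtained via the Green trace and interlacing. Averaging over all states fails because $\pi_v\mathsf H_v^2$ is unbounded as $\pi_v\to0$. The averaging over heavy anchors serves only to cancel the Palm factors $1/\pi_v$.

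(c) Most importantly, once the window is split according to whether it contains $v$, what remains is the endpoint-rooted variance $V_v^{\mathrm{end}}$, and your sketch has no way to bound it. The paper uses three steps here:
\begin{enumerate}
\item a common-deletion Efron--Stein inequality over the i.i.d.\ excursions;
\item a cyclic permutation that converts each complete-excursion deletion into $(H_L-H_0)^2$ weighted by the renewal multiplicity $M^{(1)}_{n-L}$; because this multiplicity is correlated with the deletion difference, the Carleson--Hardy step must allow $\mathcal F_0$-measurable decreasing weights;
\item a separate last-visit argument for the terminal partial excursion.
\end{enumerate}
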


The elementary bound in \eqref{eq:intro-q-asymptotic} follows from
\[
 \frac1{q_{r+1}}
 =\frac1{q_r}+\frac1{1-q_r}
 \ge\frac1{q_r}+1,
\]
which gives $q_r^{-1}\ge r+4$ by induction.

The lower half of Theorem~\ref{thm:intro-main} gives, for every irreducible
reversible kernel $P$ on $m$ states and every $n\ge2$,
\begin{equation}
 \Gap(\widetilde P_n)
 \ge\frac1{1080m}\frac{\Gap(P)}n.
 \label{eq:intro-main}
\end{equation}
The same estimate holds for $n=1$: the map $x\mapsto e_x$ is a bijective
relabeling, so $\widetilde P_1$ is isomorphic to $P$.  The case $n=1$ is not
included in the infimum defining $c_m^\star$.

\begin{remark}[Degenerate and periodic windows]
\label{rem:degenerate-periodic}
If the support of $\mu_n$ is a singleton, we use the convention
$\Gap(\widetilde P_n)=1$, as in Section~\ref{sec:preliminaries}.  For the
deterministic two-state alternating chain, an even window has singleton
count support, whereas an odd window alternates between two count vectors
and its projected kernel has right gap $2$.  Thus periodic parity
degeneracies are compatible with the uniform lower bound but need not have
the generic $n^{-1}$ fixed-kernel relaxation scale.
\end{remark}

The bound is uniform in both $P$ and $n$, including sparse and periodic
kernels.  The nested rare-state construction produces finite $m$-state
kernels and windows whose normalized Rayleigh quotients approach
$q_{m-2}$, showing that the optimal comparison cost cannot be $o(m)$.

For a fixed aperiodic kernel, the $n^{-1}$ order in
\eqref{eq:intro-main} is sharp.  Indeed, applying the count kernel to
a linear statistic associated with a nonconstant eigenfunction gives an
$O(n^{-1})$ upper bound with a constant depending on $P$; see
Section~\ref{sec:upper}.  Periodic boundary
cases can relax faster, so the matching upper statement is kept separate
from the uniform lower theorem.

\subsection{Proof sketch of Theorem~\ref{thm:intro-main}}

We first prove the uniform lower bound.  Recall that
$\cD_r(F)=\frac12\E(H_r-H_0)^2$, where $H_t=F(K_t)$.  If
$n\gamma\le m$, the adjacent-block estimate of
Lemma~\ref{lem:block-separation}, followed by the telescoping inequality of
Lemma~\ref{lem:window-telescoping}, gives
\[
 \cD_1(F)
 \ge \frac{c_0}{m}\frac{\gamma}{n}\Var_{\mu_n}(F),
 \qquad c_0=\frac{1-e^{-1}}2.
\]
This is stronger than the claimed lower bound.

Suppose now that $n\gamma>m$.  Let
\[
 \mathsf A=\left\{v:\pi_v\ge\frac1{2m}\right\},
 \qquad \alpha=\pi(\mathsf A)>\frac12,
 \qquad w_v=\frac{\pi_v}{\alpha}.
\]
For every $v\in\mathsf A$, the Green-kernel and interlacing argument of
Lemma~\ref{lem:hitting} gives
\[
 \mathsf H_v:=\max_x\E_xT_v
 <\mathsf H_*:=\frac{3m}{\gamma}.
\]
Split the window law according to whether the window avoids $v$ or contains
$v$.  Corollary~\ref{cor:no-anchor} controls the first sector, while
Lemma~\ref{lem:first-anchor} transports the second to a length-$n$ window
started at $v$.  Both estimates use the same center $c_v$ and yield
\[
 \Var_{\mu_n}(F)
 \le\{64\mathsf H_v^2+(8n-4)\mathsf H_v\}\cD_1(F)
     +2p_vV_v^{\mathrm{end}}.
\]
Averaging this inequality with the weights $w_v$ before estimating the
endpoint term is what prevents a loss of order $\pi_v^{-1}$.

To control the endpoint variances, decompose the rooted window into its
i.i.d.\ return excursions from $v$.  Common-deletion Efron--Stein splits
the deletion cost into complete excursions and the unique terminal partial
excursion.  For a complete excursion, cyclically moving the deleted block
to the front produces an exact renewal multiplicity.  After reversal, this
becomes an initially known decreasing weight, and a stopped weighted Hardy
inequality converts the deletion difference into physical window
increments.  Palm inversion and averaging over $v$ allocate these
increments to future visits.  For the terminal partial excursion, an
independent reversed Palm past supplies the deleted tail, while a last-visit
decomposition removes the remaining multiplicity.
Proposition~\ref{prop:endpoint-closure} consequently gives
\[
 \sum_{v\in\mathsf A}w_vV_v^{\mathrm{end}}
 \le(64n\mathsf H_*+48n)\cD_1(F).
\]
Combining the estimates and using $n\gamma>m$ gives
\[
 \Var_{\mu_n}(F)
 \le\{64\mathsf H_*^2+136n\mathsf H_*+96n\}\cD_1(F)
 \le1080\,\frac{mn}{\gamma}\cD_1(F).
\]
The three contributions to the last constant are respectively
$576$, $408$, and $96$.  The variational formula therefore proves
$c_m^\star\ge(1080m)^{-1}$.

For the complementary upper bound,
Theorem~\ref{thm:exact-cardinality-upper} attaches $m-1$ nested rare levels
to a
dominant root.  In the successive singular limits, product tests reduce the
normalized Rayleigh quotient to
\[
 \Phi_r(y_1,\ldots,y_r)
 =y_1^2+(1-2y_1)\Phi_{r-1}(y_2,\ldots,y_r).
\]
Optimization gives $q_r=q_{r-1}(1-q_{r-1})$.  A reverse-order diagonal
choice then produces finite kernels and finite windows whose quotients
approach $q_{m-2}$, proving $c_m^\star\le q_{m-2}$.  Finally,
$q_r^{-1}=r+\log r+O(1)$, which completes the claimed state-space order.

\subsection{Related work and context}

In earlier work~\cite{XiangXinZhang2026}, the same stationary one-step
count kernel was studied under strict positivity.  That work proved a
$P$-dependent lower bound of order $n^{-1}$, established the
linear-statistic upper bound recalled in Section~\ref{sec:upper}, and
conjectured a comparison with an absolute constant independent of the
state-space size.  The present theorem resolves that question in the
negative: since $c_m^\star\le q_{m-2}\to0$, no state-space-independent
constant is possible.  It also gives the correct replacement, namely a
uniform comparison cost of optimal order $m$.

The projected count kernel is related to Markov-chain aggregation.  Strong
lumpability makes a projected process Markov for every initial law, while
ordinary and exact lumpability preserve weaker stationary or transient
information; see Kemeny and Snell~\cite{KemenySnell} and
Buchholz~\cite{Buchholz1994}.  Occupation counts do not generally satisfy
these hypotheses.  The stationary-edge definition used here instead retains
the one-step edge measure and produces a reversible kernel even when the full
projected process has memory.  Classical lumpability criteria therefore
clarify the object but do not provide its Poincar\'e constant.

Occupation counts are also finite-time empirical measures.  Long-time
deviations of empirical measures go back to the
Donsker--Varadhan theory~\cite{DonskerVaradhan1975}; finite-state
multivariate empirical measures are treated, for example, by
Ellis~\cite{Ellis1988}.  Spectral and coupling methods also yield
concentration and variance bounds for empirical averages along the original
trajectory~\cite{Paulin2015}.  Our question is different in two respects:
$n$ is finite, and the quantity being estimated is the spectral gap of the
projected count kernel rather than a tail probability under the original
path law.

Functional inequalities for local dynamics provide another comparison.
Reversible-chain comparison theorems~\cite{DiaconisSaloffCoste1993}
and martingale decompositions for Glauber or Kawasaki dynamics, such as
those of Lu and Yau~\cite{LuYau1993}, control chains whose local update is
specified directly.  Here an update removes one hidden endpoint, adds the
other, and then averages over all path orders compatible with the observed
count.  The resulting kernel is neither a path-space heat bath nor a
standard conservative spin dynamics.  In particular, comparisons based on
uniformly positive local conditional probabilities degenerate at sparse
kernels, whereas Theorem~\ref{thm:intro-main} remains uniform for fixed
$m$.

When $m=2$, the projected count kernel is birth--death and discrete Hardy
criteria are available~\cite{Miclo1999}.  The multidimensional count
simplex has no analogous total order.  The contribution here is to replace
that order by an averaged heavy-anchor excursion decomposition and to obtain
the optimal linear order of the comparison cost for arbitrary state-space
size.  The complementary nested rare-state construction shows that this
linear loss is intrinsic rather than an artifact of the lower-bound method.

Our spectral and hitting-time conventions follow Levin, Peres, and
Wilmer~\cite{LPW}.  Section~\ref{sec:preliminaries} states the unnormalized
Kac identity and the Palm formulas used below.

\subsection{Organization}

Section~\ref{sec:preliminaries} defines the projected kernel and records the
Palm identities used throughout.  Sections~\ref{sec:short}
and~\ref{sec:hitting} handle the short regime and the anchor hitting time.
Sections~\ref{sec:no-anchor}--\ref{sec:reverse-bridge} prove the no-anchor,
first-anchor, and reverse-excursion estimates.  Section~\ref{sec:endpoint}
develops the deletion sectors used in the averaged endpoint estimate, and
Section~\ref{sec:assembly} assembles the linear-in-$m$ lower bound.
Section~\ref{sec:upper} records sharpness in the window length for each fixed
aperiodic kernel.  Section~\ref{sec:obstruction} constructs the nested
rare-state witnesses and proves the matching state-space order.  The final
section discusses the scope of the theorem and the remaining exact-constant
questions.

\section{Projected kernels, energies, and Palm inversion}
\label{sec:preliminaries}

Let $P$ be an irreducible reversible kernel on a finite state space $\Om$,
with stationary law $\pi$.  We use a stationary two-sided realization
$(X_t)_{t\in\mathbb Z}$.  For a fixed window length $n\ge1$, set
\begin{equation}
 K_t=\sum_{j=0}^{n-1}e_{X_{t+j}},
 \qquad H_t=F(K_t),
 \label{eq:window-def}
\end{equation}
where $F$ is a real function on the support of $K_0$.  Let $\mu_n$ denote
the law of $K_0$.

\subsection{The projected count kernel}

Define the projected count kernel by its stationary edge measure:
\begin{equation}
 \mu_n(k)\widetilde P_n(k,\ell)
 =\Pp(K_0=k,K_1=\ell).
 \label{eq:count-kernel}
\end{equation}
The state space of $\widetilde P_n$ is always
$\mathsf S_n=\operatorname{supp}(\mu_n)$; no rows outside this support are
adjoined.  Reversal of $(X_0,\ldots,X_n)$ preserves its probability and
interchanges the count pair $(K_0,K_1)$.  Hence the right side of
\eqref{eq:count-kernel} is symmetric in $(k,\ell)$, and $\widetilde P_n$ is
reversible with stationary law $\mu_n$.

The kernel $\widetilde P_n$ is irreducible on $\mathsf S_n$.  Indeed, let
$k,\ell\in\mathsf S_n$ and choose admissible length-$n$ words with these
counts.  Irreducibility of $P$ supplies an admissible positive-probability
path from the last state of the first word to the first state of the
second.  Concatenate the first word, this connecting path, and the second
word, and slide a length-$n$ window along the concatenation.  The resulting
sequence of count vectors is a path of positive $\widetilde P_n$-edges
from $k$ to $\ell$.

For $r\ge1$, define the stationary window energy
\begin{equation}
 \cD_r(F)=\frac12\E(H_r-H_0)^2.
 \label{eq:Dr}
\end{equation}
In particular,
\begin{equation}
 \cD_1(F)=\cE_{\widetilde P_n}(F,F)
 =\frac12\sum_{k,\ell}\mu_n(k)\widetilde P_n(k,\ell)
                    (F(\ell)-F(k))^2.
 \label{eq:D1-dirichlet}
\end{equation}
If the count support contains at least two points, the variational formula is
\begin{equation}
 \Gap(\widetilde P_n)
 =\inf_{\Var_{\mu_n}(F)>0}
   \frac{\cD_1(F)}{\Var_{\mu_n}(F)}.
 \label{eq:gap-variational}
\end{equation}
We assign gap $1$ to a one-point kernel.  This convention only removes
vacuous cases.

We write
\[
 \Delta_t=H_{t+1}-H_t.
\]
Throughout, $\Delta_t$ denotes an actual one-step shift of the length-$n$
window.  Although later arguments rearrange or delete return excursions,
every energy estimate is expressed in these increments.

\subsection{Return cycles and the unnormalized Kac identity}

Let $A\subseteq\Om$ be nonempty, and put
\[
 \tau_A^+=\inf\{t\ge1:X_t\in A\}.
\]
For a nonnegative measurable path observable $g$, the finite-state Kac
identity in the normalization used here is
\begin{equation}
 \sum_{a\in A}\pi_a\E_a
   \sum_{t=0}^{\tau_A^+-1}g\circ\theta_t
 =\E_\pi g,
 \label{eq:kac-general}
\end{equation}
where $\theta_t$ is the path shift.  No factor $1/\pi(A)$ appears because
the left side is not normalized by $\pi(A)$.

To verify \eqref{eq:kac-general}, consider a two-sided stationary path and shift each term
on the left of \eqref{eq:kac-general} to time zero.  Its indicator says that
time $-t$ is the last visit to $A$ at or before time zero.  Exactly one
$t\ge0$ has this property almost surely, by positive recurrence of the
finite irreducible chain.  Summing the shifted terms gives
$g$ at time zero and proves \eqref{eq:kac-general}.  This verification also
shows that $g$ may depend on a finite future word; later we take
$g=\Delta_0^2$.

For a singleton $A=\{v\}$, write
\[
 L=\tau_v^+,
 \qquad \Pp_v=\Pp(\,\cdot\mid X_0=v).
\]
Kac's return formula gives $\E_vL=1/\pi_v$.  Thus
\begin{equation}
 \frac1{\E_vL}\E_v\sum_{t=0}^{L-1}g\circ\theta_t
 =\E_\pi g.
 \label{eq:palm-singleton}
\end{equation}
We refer to \eqref{eq:palm-singleton} as Palm inversion.  We will also use
it after retaining only a subset of the phases; nonnegativity then gives an
inequality.

\subsection{Reversal and conditional independence at an anchor}

Detailed balance implies the word identity
\begin{equation}
 \pi_{x_0}\prod_{i=0}^{r-1}P(x_i,x_{i+1})
 =\pi_{x_r}\prod_{i=0}^{r-1}P(x_{i+1},x_i).
 \label{eq:word-reversal-prelim}
\end{equation}
Consequently the past and future of a stationary two-sided chain are
conditionally independent given $X_0$, and the reversed transition kernel
is again $P$.  Let
$\mathcal F_t^X=\sigma(X_s:s\le t)$ be the natural filtration of the
two-sided path.  More precisely, if $T$ is a stopping time for this
filtration, then for every bounded measurable future-path functional $G$,
\[
 \E\left[G(X_T,X_{T+1},\ldots)\mid\mathcal F_T^X\right]
 =\E_{X_T}G(X_0,X_1,\ldots)
 \quad\text{on }\{T<\infty\}.
\]
Thus, on $\{X_T=v\}$, the post-$T$ path has law $\Pp_v$ and is
conditionally independent of $\mathcal F_T^X$.  In later applications,
independence is always conditional on the displayed stopping-time
information.

\subsection{The endpoint-rooted count law}

For $v\in\Om$, define
\begin{equation}
 \zeta_v^{\mathrm{end}}
 =\operatorname{Law}_v\left(\sum_{j=0}^{n-1}e_{X_j}\right),
 \qquad
 c_v=\E_{\zeta_v^{\mathrm{end}}}F,
 \qquad
 V_v^{\mathrm{end}}=\Var_{\zeta_v^{\mathrm{end}}}(F).
 \label{eq:endpoint-law}
\end{equation}
The superscript emphasizes that the anchor is at an endpoint, not a
uniformly chosen occurrence of $v$ inside the window.  These two Palm laws
are generally different.  The strong Markov property yields
$\zeta_v^{\mathrm{end}}$, so no inverse occurrence multiplicity is needed.

\section{A uniform estimate in the short and intermediate regimes}
\label{sec:short}

This section proves the estimate used throughout the range
$n\gamma\le m$.  The underlying two-block estimate is dimension free and
treats the negative spectrum directly; the factor $m$ appears only when the
one-step and $n$-step window energies are compared.

\begin{lemma}[Separation of two adjacent blocks]
\label{lem:block-separation}
For every $n\ge2$ and every count function $F$,
\begin{equation}
 \cD_n(F)\ge c_0\min\{1,n\gamma\}\Var(H_0),
 \qquad c_0=\frac{1-e^{-1}}2.
 \label{eq:block-separation}
\end{equation}
\end{lemma}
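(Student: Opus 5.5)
The plan is to reduce $\cD_n(F)$ to a covariance estimate between two disjoint adjacent blocks, and to convert that covariance into a scalar spectral inequality for $P$.

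\textbf{Step 1 (reduce to functions of the endpoints).} We have
\[
 \cD_n(F)=\Var(H_0)-\operatorname{Cov}(H_0,H_n),
\]
so it suffices to show $\operatorname{Cov}(H_0,H_n)\le(1-c_0\min\{1,n\gamma\})\Var(H_0)$. Since $H_0$ is a function of $X_0,\ldots,X_{n-1}$ and $H_n$ is a function of $X_n,\ldots,X_{2n-1}$, the Markov property and conditional independence of past and future at a fixed time give
\[
 \operatorname{Cov}(H_0,H_n)=\ip{a}{Pb}_\pi,
 \qquad a(x)=\E[H_0\mid X_{n-1}=x],\quad b(y)=\E[H_n\mid X_n=y].
\]
By the word-reversal identity \eqref{eq:word-reversal-prelim}, reversing the block $(X_0,\ldots,X_{n-1})$ preserves its counts. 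Hence $a=b=:g$, and also $g(x)=\E[H_0\mid X_0=x]$. After centering $F$ we may take $\pi(g)=0$, so $\operatorname{Cov}(H_0,H_n)=\ip{g}{Pg}_\pi$.

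\textbf{Step 2 (lower bound on $\Var(H_0)$).} The bound $\Var(H_0)\ge\Var_\pi(g)$ only yields a factor $\gamma$, not $n\gamma$. Instead, project $H_0$ onto the random variable $Z=g(X_0)+g(X_{n-1})$. We have $\operatorname{Cov}(H_0,Z)=2\|g\|_\pi^2$ and $\Var(Z)=2\|g\|^2_\pi+2\ip{g}{P^{n-1}g}_\pi$, so Cauchy--Schwarz gives
\[
 \Var(H_0)\ge\frac{2\|g\|_\pi^4}{\|g\|_\pi^2+\ip{g}{P^{n-1}g}_\pi}.
\]
Let $\nu$ be the normalized spectral measure of $g$ for $P$. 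It lives on $[-1,1-\gamma]$, because $g\perp\one$. Set $M_j=\int\lambda^j\,d\nu$. The lemma then reduces to the scalar inequality
\[
 M_1(1+M_{n-1})\le 2\bigl(1-c_0\min\{1,n\gamma\}\bigr)
 \qquad\text{whenever }M_1>0,
\]
since when $M_1\le0$ the covariance is nonpositive and there is nothing to prove.

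\textbf{Step 3 (the scalar inequality, expected main obstacle).} This is where negative spectrum must be handled directly, with no laziness assumption. When $n-1$ is even, eigenvalues near $-1$ make $M_{n-1}$ large, but they also pull $M_1$ down. To capture this tradeoff I would first try the bound $M_1(1+M_{n-1})\le\frac12\int(1+\lambda)^2d\nu\cdot\ldots$, or more simply $M_1(1+M_{n-1})\le\frac{(M_1+1+M_{n-1})^2}{4}$ combined with the pointwise inequality $\lambda+\lambda^{n-1}\le 2-(1-e^{-x})$ at $\lambda$ with $x=(n-1)\gamma$. The latter splits into two cases.
\begin{itemize}
 \item For $\lambda\in[0,1-\gamma]$, use $\lambda^{n-1}\le e^{-(n-1)\gamma}$ and $\lambda\le 1-\gamma$.
 \item For $\lambda<0$, note that $\lambda+\lambda^{n-1}\le 1$.
\end{itemize}
Taking the $\nu$-average of $\lambda+\lambda^{n-1}$ then gives $1+M_1+M_{n-1}\le 2(2-c)$ for a suitable $c$. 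Finally, $1-e^{-x}\ge(1-e^{-1})\min\{1,x\}$ produces the constant $c_0=\frac{1-e^{-1}}2$; the factor $\frac12$ reflects that the two endpoints share the deficit. The cases $n=2$ and the mismatch between $(n-1)\gamma$ and $n\gamma$ should be absorbed by using the extra factor $1-\gamma$ in $M_1$. If the AM--GM route loses too much, the fallback is to optimize the projection coefficients of $g(X_0)$ and $g(X_{n-1})$ separately, or to project onto $\E[H_0\mid X_0,X_{n-1}]$, and redo the same spectral bookkeeping.
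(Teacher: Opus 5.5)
Your Steps 1 and 2 are correct and match the paper's setup. You use the identity $\cD_n(F)=\Var(H_0)-\ip{g}{Pg}_\pi$ obtained from the two-endpoint function, and a Cauchy--Schwarz projection of $H_0$ onto the sum of its two endpoint values. The scalar inequality $M_1(1+M_{n-1})\le2(1-c_0\min\{1,n\gamma\})$ that you reduce to is also true.

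The gap is in Step 3: the route you propose cannot prove it. The AM--GM bound $M_1(1+M_{n-1})\le\frac14(1+M_1+M_{n-1})^2$ loses too much, because the two factors are unbalanced. Take $\nu=\delta_{1-\gamma}$ with $\gamma\to0$ and $n\gamma\to0$. Then $M_1\to1$ and $1+M_{n-1}\to2$, so the left side tends to $2$ while the AM--GM bound tends to $9/4$. No pointwise estimate on $\lambda+\lambda^{n-1}$ can push the bound below the required $2(1-c_0n\gamma)<2$. Your stated consequence $1+M_1+M_{n-1}\le2(2-c)$ only yields $M_1(1+M_{n-1})\le(2-c)^2$, which always exceeds $2(1-c)$. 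Your first alternative is left unfinished and the fallbacks are not carried out, so the proof breaks exactly at the step you flagged as the main obstacle.

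The missing idea is to keep the product structure and remove the negative spectrum before estimating.
\begin{itemize}
\item Write $\nu=p\nu_++(1-p)\nu_-$ with $\nu_+$ on $[0,1-\gamma]$ and $\nu_-$ on $[-1,0)$. Set $A=\int\lambda\,d\nu_+$, $C=\int\lambda^{n-1}\,d\nu_+$, and $x=(1-p)\int|\lambda|\,d\nu_-$.
\item Since $|\lambda|^{n-1}\le|\lambda|$, you have $M_1=pA-x$ and $1+M_{n-1}\le1+pC+x$.
\item The map $x\mapsto(pA-x)(1+pC+x)$ is nonincreasing on $x\ge0$, because $pA\le1$.
\item Hence $M_1(1+M_{n-1})\le A(1+C)\le(1-\gamma)+(1-\gamma)^n$.
\item The bound $1-(1-\gamma)^n\ge1-e^{-n\gamma}\ge(1-e^{-1})\min\{1,n\gamma\}$ finishes the argument.
\end{itemize}
The product $\lambda\cdot\lambda^{n-1}=\lambda^n$ is also what resolves your $(n-1)\gamma$ versus $n\gamma$ mismatch; an additive bound cannot see this.

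The paper avoids this bookkeeping. It projects only onto $h_+(X_0)+h_+(X_{n-1})$, where $h_+$ is the component of $g$ on the nonconstant eigenvalues in $[0,\lambda_2]$. This gives $\|h_+\|_{L^2(\pi)}^2\le\frac{1+\lambda_2^{n-1}}2\Var(H_0)$. Since negative spectral components contribute nonpositively, $\ip{g}{Pg}_\pi\le\lambda_2\|h_+\|_{L^2(\pi)}^2\le\frac{\lambda_2+\lambda_2^n}2\Var(H_0)$. Negative eigenvalues therefore never enter the Cauchy--Schwarz step.
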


\begin{proof}
Subtract a constant so that $\E H_0=0$, and set
\[
 h(x)=\E[H_0\mid X_{n-1}=x].
\]
Because $H_0$ depends on the ordered block only through its occupation
count, reversing $(X_0,\ldots,X_{n-1})$ leaves $H_0$ unchanged.  Reversibility
therefore gives the second endpoint identity
\begin{equation}
 h(x)=\E[H_0\mid X_0=x].
 \label{eq:two-endpoint-h}
\end{equation}

Conditionally on the boundary edge $(X_{n-1},X_n)$, the past portion ending
at time $n-1$ and the future portion starting at time $n$ are independent.
The corresponding block conditional means are $h(X_{n-1})$ and $h(X_n)$,
respectively.  It follows that
\begin{equation}
 \E(H_0H_n)=\ip{h}{Ph}_\pi,
 \qquad
 \cD_n(F)=\Var(H_0)-\ip{h}{Ph}_\pi.
 \label{eq:endpoint-identity}
\end{equation}

Suppose first that $\lambda_2\ge0$.  Let $h_+$ be the orthogonal projection
of $h$ onto the direct sum of the nonconstant eigenspaces with eigenvalues
in $[0,\lambda_2]$, and write
\[
 a=\|h_+\|_{L^2(\pi)}^2,
 \qquad U=h_+(X_0),\qquad V=h_+(X_{n-1}).
\]
The two endpoint identities imply
\[
 \E(H_0U)=\E(H_0V)=a.
\]
Moreover,
\[
 \E(U+V)^2
 =2a+2\ip{h_+}{P^{n-1}h_+}_\pi
 \le2(1+\lambda_2^{n-1})a.
\]
Since $\E[H_0(U+V)]=2a$, Cauchy--Schwarz yields
\begin{equation}
 a\le\frac{1+\lambda_2^{n-1}}2\Var(H_0).
 \label{eq:positive-projection}
\end{equation}
Every negative spectral component of $h$ contributes a nonpositive amount
to $\ip{h}{Ph}_\pi$.  Hence
\[
 \ip{h}{Ph}_\pi
 \le\lambda_2a
 \le\frac{\lambda_2+\lambda_2^n}{2}\Var(H_0).
\]
Inserting this in \eqref{eq:endpoint-identity} gives the more precise bound
\begin{equation}
 \cD_n(F)
 \ge\frac{(1-\lambda_2)+(1-\lambda_2^n)}2\Var(H_0).
 \label{eq:block-positive-exact}
\end{equation}

If $\lambda_2<0$, all nonconstant spectral components of $h$ make a
nonpositive contribution to $\ip{h}{Ph}_\pi$, and
\eqref{eq:endpoint-identity} directly gives
$\cD_n(F)\ge\Var(H_0)$.  Finally, when $\lambda_2\ge0$, one has
$0<\gamma\le1$; writing $\lambda_2=1-\gamma$, we use
\[
 1-(1-\gamma)^n\ge1-e^{-n\gamma}
 \ge(1-e^{-1})\min\{1,n\gamma\}.
\]
Together with \eqref{eq:block-positive-exact}, this proves
\eqref{eq:block-separation}.  In the case $\lambda_2<0$, one has
$n\gamma>1$, and the already obtained bound is stronger.
\end{proof}

\begin{lemma}[Window telescoping]
\label{lem:window-telescoping}
For every $n\ge1$,
\begin{equation}
 \cD_n(F)\le n^2\cD_1(F).
 \label{eq:window-telescoping}
\end{equation}
\end{lemma}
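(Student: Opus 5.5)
The estimate follows from telescoping and Cauchy--Schwarz, combined with stationarity of the count process $(K_t)_{t\in\mathbb Z}$. I would write the $n$-step difference as a sum of one-step increments,
\[
 H_n-H_0=\sum_{t=0}^{n-1}\Delta_t,
\]
and apply Cauchy--Schwarz to this sum of $n$ terms, which gives the pointwise bound
\[
 (H_n-H_0)^2\le n\sum_{t=0}^{n-1}\Delta_t^2 .
\]

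Next I would take expectations under the stationary two-sided chain. Because $(X_t)$ is stationary, the shifted window $K_t$ has the same joint law with $K_{t+1}$ as $K_0$ has with $K_1$. Hence $\E\Delta_t^2=\E\Delta_0^2=2\cD_1(F)$ for every $t$. Summing the $n$ equal terms and multiplying by $\tfrac12$ gives
\[
 \cD_n(F)=\tfrac12\E(H_n-H_0)^2\le \tfrac12\, n\cdot n\cdot 2\cD_1(F)=n^2\cD_1(F).
\]

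The only point to check is that every $H_t$ is well defined. Each $K_t$ lies in $\mathsf S_n$ almost surely, since it has law $\mu_n$, and $F$ is defined on $\mathsf S_n$. For $n=1$ the inequality is an equality, so no separate case is needed. I do not expect any real obstacle here. The lemma is included only to convert the two-block separation of Lemma~\ref{lem:block-separation} into a one-step Dirichlet bound $\cD_1(F)\ge c_0 n^{-2}\min\{1,n\gamma\}\Var(H_0)$, which in the regime $n\gamma\le m$ gives at least $\frac{c_0}{m}\frac{\gamma}{n}\Var_{\mu_n}(F)$.
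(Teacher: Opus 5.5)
Your proposal is correct and follows the paper's proof essentially verbatim: you telescope $H_n-H_0=\sum_{t=0}^{n-1}\Delta_t$, apply Cauchy--Schwarz to obtain $(H_n-H_0)^2\le n\sum_{t=0}^{n-1}\Delta_t^2$, and then use stationarity to get $\E\Delta_t^2=2\cD_1(F)$ for each $t$. Your well-definedness check and the $n=1$ remark are harmless additions.
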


\begin{proof}
The identity
\[
 H_n-H_0=\sum_{t=0}^{n-1}\Delta_t
\]
and Cauchy--Schwarz give
$(H_n-H_0)^2\le n\sum_{t=0}^{n-1}\Delta_t^2$.
After expectation, stationarity makes all $n$ increment energies equal.
Dividing by $2$ gives \eqref{eq:window-telescoping}.
\end{proof}

Combining Lemmas~\ref{lem:block-separation}
and~\ref{lem:window-telescoping},
\begin{equation}
 \cD_1(F)\ge
 c_0\frac{\min\{1,n\gamma\}}{n^2}\Var(H_0).
 \label{eq:short-two-regime}
\end{equation}
If $n\ge2$ and $n\gamma\le m$, then
\begin{equation}
 \cD_1(F)\ge \frac{c_0}{m}\frac\gamma n\Var(H_0).
 \label{eq:short-final}
\end{equation}
Indeed, when $n\gamma\le1$, \eqref{eq:short-two-regime} gives the
stronger coefficient $c_0\gamma/n$.  When $1\le n\gamma\le m$, it gives
$c_0/n^2=c_0(n\gamma)^{-1}(\gamma/n)$, which is at least the right side
of \eqref{eq:short-final}.

\section{Heavy anchors and hitting times}
\label{sec:hitting}

In the long regime, we first obtain an estimate for each anchor and then
average over a set of heavy anchors.  Define
\begin{equation}
 \mathsf A=\left\{v\in\Om:\pi_v\ge\frac1{2m}\right\},
 \qquad
 \alpha=\pi(\mathsf A),
 \qquad
 w_v=\frac{\pi_v}{\alpha}\quad(v\in\mathsf A).
 \label{eq:heavy-anchor}
\end{equation}
Every state outside $\mathsf A$ has mass strictly smaller than $1/(2m)$,
and hence
\begin{equation}
 \alpha>\frac12,
 \qquad
 \sum_{v\in\mathsf A}w_v=1.
 \label{eq:heavy-anchor-mass}
\end{equation}

For any $v\in\Om$, put $B=\Om\setminus\{v\}$ and define
\begin{equation}
 T_v=\inf\{t\ge0:X_t=v\},
 \qquad
 \mathsf H_v=\max_{x\ne v}\E_xT_v,
 \qquad
 \mathsf H_*=\frac{3m}{\gamma}.
 \label{eq:hitting-def}
\end{equation}
Here $T_v$ is an entrance time and equals zero when the chain starts at
$v$; strict return times are denoted by $\tau_v^+$ or $L$.  The displayed maximum equals the
maximum over all $x\in\Om$, since $\E_vT_v=0$.  Because $m\ge2$ and
$T_v\ge1$ when started from $x\ne v$, one also has $\mathsf H_v\ge1$.

\begin{lemma}[Green--interlacing hitting bound]
\label{lem:hitting}
For every $v\in\Om$,
\begin{equation}
 \mathsf H_v\le\frac{\pi_v^{-1}+m-2}{\gamma}.
 \label{eq:hitting-green}
\end{equation}
In particular, every $v\in\mathsf A$ satisfies
\begin{equation}
 \mathsf H_v<\mathsf H_*=\frac{3m}{\gamma}.
 \label{eq:hitting-heavy}
\end{equation}
Moreover, for every $x\in\Om$,
\begin{equation}
 \E_xT_v^2\le2\mathsf H_v^2.
 \label{eq:hitting-bound}
\end{equation}
\end{lemma}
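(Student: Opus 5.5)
We will study the killed kernel $Q=P|_B$ on $B=\Om\setminus\{v\}$, i.e.\ $P$ with the row and column of $v$ deleted. It is substochastic and self-adjoint in $L^2(\pi|_B)$. For $x\in B$ the entrance time has the Green-kernel representation
\[
 \E_xT_v=\sum_{t\ge0}(Q^t\one_B)(x)=\bigl((I-Q)^{-1}\one_B\bigr)(x).
\]

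\emph{Interlacing.} Write the eigenvalues of $Q$ as $\beta_1\ge\beta_2\ge\cdots\ge\beta_{m-1}$. Since $Q$ is a compression of $P$ to a codimension-one subspace of $L^2(\pi)$, Cauchy interlacing gives $\beta_j\le\lambda_j$ for $j=1,\ldots,m-1$. In particular
\[
 1-\beta_j\ge1-\lambda_j\ge\gamma\quad\text{for }j\ge2,
\]
because $\lambda_j\le\lambda_2$. The bottom eigenvalue $\beta_1$ is not controlled by interlacing, so it is handled through the trace.

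\emph{Trace identity.} Let $G=(I-Q)^{-1}$ with entries $G(x,y)$, the expected number of visits to $y$ before hitting $v$ starting from $x$. Then
\[
 \operatorname{tr}G=\sum_{x\ne v}G(x,x)=\sum_j\frac1{1-\beta_j}.
\]
We use the excursion identity $G(x,x)=1/\Pp_x(T_v<\tau_x^+)$ together with the commute-time relation
\[
 \pi_x\Pp_x(T_v<\tau_x^+)=\bigl(\E_xT_v+\E_vT_x\bigr)^{-1}
\]
to express $G(x,x)$ through hitting times. The diagonal entries alone are not what we need, however. The cleaner route is the symmetric form $\pi_xG(x,y)=\pi_yG(y,x)$, which lets us bound $\E_xT_v=\sum_yG(x,y)$ spectrally:
\[
 \E_xT_v=\sum_j\frac{\phi_j(x)\ip{\phi_j}{\one_B}_\pi}{1-\beta_j},
\]
where the $\phi_j$ are orthonormal eigenvectors in $L^2(\pi|_B)$. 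The terms with $j\ge2$ are at most $\gamma^{-1}$ times a Cauchy--Schwarz sum. For $j=1$, Perron--Frobenius gives a positive $\phi_1$; we bound $(1-\beta_1)^{-1}$ by $\E_{\pi|_B}$-type quantities via the variational bound
\[
 1-\beta_1\ge\gamma\,\pi_v/(1-\pi_v)\cdot(\ldots),
\]
i.e.\ the standard estimate that a function vanishing at $v$ has Dirichlet form at least $\gamma$ times its variance, and its variance is at least $\pi_v$ times its squared $L^2$ norm. That yields $1-\beta_1\ge\gamma\pi_v$.

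\emph{Combining.} The main obstacle is turning these eigenvalue bounds into the pointwise maximum $\max_x\E_xT_v$ with exactly the form $(\pi_v^{-1}+m-2)/\gamma$. The natural plan is to bound $\E_xT_v\le\sum_y G(x,y)$ by something like the trace of a suitable symmetrized kernel. The $m-1$ eigenvalues contribute $(\gamma\pi_v)^{-1}$ from $j=1$ and $\gamma^{-1}$ from each of the other $m-2$ indices. I would first try the identity $\E_xT_v\le\E_xT_v+\E_vT_x=G(x,x)/\pi_x$ combined with $G(x,x)/\pi_x$ equal to the $(x,x)$ entry of $G$ in the orthonormal frame, which is at most $\sum_j\phi_j(x)^2/(1-\beta_j)$ divided by $\pi_x$. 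This is problematic when $\pi_x$ is small, so some care (possibly a maximum-principle argument showing the maximizer of $\E_\cdot T_v$ can be handled through a $\pi$-averaged sum) will be needed. For heavy $v$, $\pi_v^{-1}\le2m$ gives $\mathsf H_v\le(3m-2)/\gamma<3m/\gamma$.

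\emph{Second moment.} The bound \eqref{eq:hitting-bound} follows from the standard identity
\[
 \E_xT_v^2=2\E_x\sum_{t<T_v}\E_{X_t}T_v-\E_xT_v,
\]
obtained from the Markov property applied to $T_v^2=\sum_{t<T_v}(2(T_v-t)-1)$. Bounding $\E_{X_t}T_v\le\mathsf H_v$ gives
\[
 \E_xT_v^2\le2\mathsf H_v\E_xT_v\le2\mathsf H_v^2.
\]
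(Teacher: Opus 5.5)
\textbf{Genuine gap: the central step is missing.}

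Several parts of your proposal are correct and match the paper. You get $1-\beta_1\ge\gamma\pi_v$ by applying the Poincar\'e inequality to functions vanishing at $v$. You get $1-\beta_j\ge\gamma$ for $j\ge2$ from Cauchy interlacing. Together these give $\operatorname{tr}G=\sum_j(1-\beta_j)^{-1}\le(\gamma\pi_v)^{-1}+(m-2)/\gamma$. The heavy-anchor consequence $(3m-2)/\gamma<3m/\gamma$ and your second-moment argument via $T_v^2=\sum_{t<T_v}\{2(T_v-t)-1\}$ are also correct.

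However, the step you yourself flag as the main obstacle is the heart of the lemma, and you leave it open. You never connect the pointwise quantity $\max_x\E_xT_v$ to the trace. The routes you sketch cannot close this step:
\begin{itemize}
\item In the spectral expansion $\E_xT_v=\sum_j\phi_j(x)\ip{\phi_j}{\one_B}_\pi/(1-\beta_j)$, Cauchy--Schwarz only uses $\sum_j\phi_j(x)^2=\pi_x^{-1}$. The resulting bound therefore carries a factor $\pi_x^{-1/2}$.
\item The commute-time bound $\E_xT_v\le G(x,x)/\pi_x$ carries a factor $\pi_x^{-1}$.
\end{itemize}
The lemma must hold uniformly over sparse kernels with arbitrarily small stationary masses at the starting point $x$. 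Any bound of this type is therefore useless here, and a vague maximum-principle repair does not supply the missing estimate.

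\textbf{The missing idea.} You need a one-line domination of each column of the Green kernel by its diagonal entry. By the strong Markov property at the first visit to $y$, for $x,y\in B$ one has $G(x,y)=\Pp_x(T_y<T_v)\,G(y,y)\le G(y,y)$. Hence, for every starting point $x$,
\[
\E_xT_v=\sum_{y\in B}G(x,y)\le\sum_{y\in B}G(y,y)=\operatorname{tr}G .
\]
The trace is a similarity invariant: $Q$ is similar to the symmetric principal minor $D_{\pi,B}^{1/2}QD_{\pi,B}^{-1/2}$ of $D_\pi^{1/2}PD_\pi^{-1/2}$. So $\operatorname{tr}G=\sum_j(1-\beta_j)^{-1}$, with no dependence on $\pi_x$.

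Insert this inequality between your trace identity and your eigenvalue bounds. The proof of $\mathsf H_v\le(\pi_v^{-1}+m-2)/\gamma$ is then complete and coincides with the paper's argument. Without it, your proposal does not establish the first (and main) assertion of the lemma.
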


\begin{proof}
Let $Q=P|_{B\times B}$ and $G=(I-Q)^{-1}$ be the killed Green kernel.
For $x,y\in B$, the strong Markov property at the first visit to $y$
gives
\begin{equation}
 G(x,y)=\Pp_x(T_y<T_v)G(y,y).
 \label{eq:green-factorization}
\end{equation}
Consequently,
\begin{equation}
 \E_xT_v=\sum_{y\in B}G(x,y)
 \le\sum_{y\in B}G(y,y)=\operatorname{tr}G.
 \label{eq:green-trace}
\end{equation}

Let $D_\pi=\operatorname{diag}(\pi)$ and
$S=D_\pi^{1/2}PD_\pi^{-1/2}$.  Reversibility makes $S$ symmetric, and the
principal minor obtained by deleting $v$ is
\[
 S_B=D_{\pi,B}^{1/2}QD_{\pi,B}^{-1/2}.
\]
Thus $S_B$ is similar to $Q$, and $(I-S_B)^{-1}$ is similar to
$G=(I-Q)^{-1}$; in particular their traces agree.  Write the eigenvalues
of $S_B$ as $\theta_1\ge\cdots\ge\theta_{m-1}$.

If $f(v)=0$, then Cauchy--Schwarz gives
\[
 \left(\sum_{x\ne v}\pi_xf(x)\right)^2
 \le(1-\pi_v)\sum_{x\ne v}\pi_xf(x)^2,
\]
and hence
\[
 \Var_\pi(f)\ge\pi_v\|f\|_{L^2(\pi)}^2.
\]
The Poincar\'e inequality and the Rayleigh--Ritz principle therefore give
$\theta_1\le1-\gamma\pi_v$.  Cauchy interlacing, in the convention of
decreasing eigenvalues, reads
\[
 \lambda_i(P)\ge\theta_i\ge\lambda_{i+1}(P),
 \qquad 1\le i\le m-1.
\]
Consequently
$\theta_i\le\lambda_i(P)\le\lambda_2(P)=1-\gamma$ for
$2\le i\le m-1$.
It follows that
\begin{equation}
 \operatorname{tr}G
 =\sum_{i=1}^{m-1}\frac1{1-\theta_i}
 \le\frac1{\gamma\pi_v}+\frac{m-2}{\gamma}.
 \label{eq:green-interlacing}
\end{equation}
Together with \eqref{eq:green-trace}, this proves
\eqref{eq:hitting-green}.  If $v\in\mathsf A$, then
$\pi_v^{-1}\le2m$, so \eqref{eq:hitting-heavy} follows.

For the second moment, the pathwise identity and the strong Markov
property give, on $\{s<T_v\}$,
\[
 \E_x[T_v-s\mid\mathcal F_s^X]
 =\E_{X_s}T_v\le\mathsf H_v.
\]
Consequently,
\begin{align*}
 \E_xT_v^2
 &=\E_x\sum_{s\ge0}\one_{\{s<T_v\}}\{2(T_v-s)-1\}\\
 &\le(2\mathsf H_v-1)\sum_{s\ge0}\Pp_x(s<T_v)
 =(2\mathsf H_v-1)\E_xT_v
 \le2\mathsf H_v^2.
\end{align*}
\end{proof}

\section{Windows avoiding an anchor}
\label{sec:no-anchor}

Fix any anchor $v\in\mathsf A$ from Section~\ref{sec:hitting}, and write
$B=\Om\setminus\{v\}$.  The anchor-free face of the count simplex is
\[
 \cA_0=\{k:k_v=0\}.
\]
In this section we suppress the dependence of $\cA_0,\xi,\kappa$, and
$\eta$ on the anchor $v$ and the window length $n$.
For $k\in\cA_0$ and $\ell\notin\cA_0$, define the oriented stationary
exit flow
\begin{equation}
 \xi(k,\ell)=\Pp(K_0=k,K_1=\ell),
 \qquad
 \kappa=\sum_{k\in\cA_0,\,\ell\notin\cA_0}\xi(k,\ell).
 \label{eq:exit-flow}
\end{equation}
When $\kappa>0$, let $\eta$ be the normalized second marginal,
\begin{equation}
 \eta(\ell)=\frac1\kappa\sum_{k\in\cA_0}\xi(k,\ell).
 \label{eq:exit-law}
\end{equation}
Thus $\eta$ is the count law immediately after an oriented exit from the
face.  If $\kappa=0$, every expression containing $\kappa\eta$ is
interpreted as zero.

The goal of this section is the centered estimate
\begin{equation}
 \E\left[\one_{\{K_{0,v}=0\}}(F(K_0)-c_v)^2\right]
 \le\{60\mathsf H_v^2+4(n-1)\mathsf H_v\}\cD_1(F),
 \label{eq:no-anchor-goal}
\end{equation}
where $c_v$ is the endpoint-rooted mean in \eqref{eq:endpoint-law}.  We
prove it in two stages.  The first is a killed-excursion estimate with an
affine boundary term; the second transports that boundary term to the same
endpoint-rooted center used in the anchor sector.

\subsection{Excursion coordinates and affine interpolation}

Work under $\Pp_v$.  Let
\begin{equation}
 L=\tau_v^+=\inf\{t\ge1:X_t=v\},
 \qquad
 X_1,\ldots,X_{L-1}\in B.
 \label{eq:return-excursion}
\end{equation}
If $L>n$, put $r=L-n$.  The $r$ consecutive windows lying entirely in $B$
start at times $1,\ldots,r$.  We label their function values and their two
adjacent boundary values by
\begin{equation}
 a_t=H_{t+1}\quad(0\le t\le r-1),
 \qquad b_-=H_0,\qquad b_+=H_{r+1}.
 \label{eq:face-phase-dictionary}
\end{equation}
Set $a_{-1}=b_-$, $a_r=b_+$, and
\begin{equation}
 d_j=a_{j+1}-a_j\quad(-1\le j\le r-1).
 \label{eq:face-increments}
\end{equation}
In terms of physical window shifts,
\begin{equation}
 d_{-1}=\Delta_0,\qquad
 d_j=\Delta_{j+1}\ (0\le j\le r-2),
 \qquad d_{r-1}=\Delta_r.
 \label{eq:face-increment-dictionary}
\end{equation}

Palm inversion gives the face identity
\begin{equation}
 \E\left[\one_{\{K_{0,v}=0\}}(F(K_0)-c)^2\right]
 =\frac1{\E_vL}\E_v\left[
   \one_{\{L>n\}}\sum_{t=0}^{r-1}(a_t-c)^2\right]
 \label{eq:face-palm}
\end{equation}
for every scalar $c$.  Each stationary no-$v$ window occurs at exactly one
of the phases in \eqref{eq:face-phase-dictionary}.

\begin{lemma}[Deterministic affine bridge]
\label{lem:deterministic-affine}
For $r\ge1$, every sequence in \eqref{eq:face-phase-dictionary} satisfies
\begin{equation}
 \sum_{t=0}^{r-1}(a_t-c)^2
 \le2(r+1)^2\sum_{j=-1}^{r-1}d_j^2
 +r\{(b_--c)^2+(b_+-c)^2\}.
 \label{eq:deterministic-affine}
\end{equation}
\end{lemma}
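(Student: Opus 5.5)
Write $a_t$ as an affine interpolation between the two boundary values plus a remainder vanishing at both ends. Put $N=r+1$; the indices run over $j=-1,\dots,r$, so there are $N+1=r+2$ points with $a_{-1}=b_-$ and $a_r=b_+$. Define
\[
 \ell_t=b_-+\frac{t+1}{N}(b_+-b_-),\qquad e_t=a_t-\ell_t ,
\]
so that $e_{-1}=e_r=0$. For each $0\le t\le r-1$, the elementary inequality $(x+y)^2\le 2x^2+2y^2$ gives
\[
 (a_t-c)^2\le 2e_t^2+2(\ell_t-c)^2 .
\]
We bound the two resulting sums separately. The constants in \eqref{eq:deterministic-affine} are generous, so the split only has to land on $2(r+1)^2\sum d_j^2$ and on $r$ times the boundary terms.

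\emph{Affine part.} Since $\ell_t-c$ is a convex combination of $b_--c$ and $b_+-c$, convexity gives
\[
 (\ell_t-c)^2\le \Bigl(1-\tfrac{t+1}{N}\Bigr)(b_--c)^2+\tfrac{t+1}{N}(b_+-c)^2 .
\]
Summing over $t=0,\dots,r-1$, the weights $(t+1)/N$ total $\frac{r(r+1)}{2N}=\frac r2$, and the complementary weights also total $\frac r2$. Hence
\[
 2\sum_{t=0}^{r-1}(\ell_t-c)^2\le r\{(b_--c)^2+(b_+-c)^2\},
\]
which is exactly the boundary term in \eqref{eq:deterministic-affine}.

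\emph{Remainder part.} The increments of $e$ are $e_{j+1}-e_j=d_j-\bar d$, where
\[
 \bar d=\frac{b_+-b_-}{N}=\frac1N\sum_{j=-1}^{r-1}d_j .
\]
Telescoping from the left end gives
\[
 e_t=\sum_{j=-1}^{t-1}(d_j-\bar d).
\]
Cauchy--Schwarz then yields $e_t^2\le (t+1)\sum_j(d_j-\bar d)^2$. Since subtracting the mean reduces the sum of squares, $\sum_j(d_j-\bar d)^2\le\sum_j d_j^2$. Summing over $t$ gives
\[
 2\sum_t e_t^2\le 2\cdot\frac{r(r+1)}2\sum_j d_j^2\le 2(r+1)^2\sum_j d_j^2 .
\]
This is within the stated constant, with room to spare; the sharper bound obtained by telescoping from the nearer endpoint is not needed.

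Adding the affine and remainder parts proves \eqref{eq:deterministic-affine}. The argument is purely deterministic, so no earlier result of the paper is required. The only step needing care is the affine-part weight computation: the coefficient on each boundary term must come out exactly $r$ rather than, say, $r+1$. The symmetric totals $r/2$ for each weight family are what guarantee this.
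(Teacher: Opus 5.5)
Your proof is correct and follows the paper's argument essentially step for step. You use the same affine interpolation $\ell_t$ and the same left-end telescoping with Cauchy--Schwarz, and you drop the mean $\bar d=(b_+-b_-)/(r+1)$ in the same way for the zero-boundary remainder; the convexity computation giving total weight $r/2$ on each boundary value also matches the paper's.
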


\begin{proof}
Let $\ell_t$ be the affine interpolation between the two boundary values,
\begin{equation}
 \ell_t=\frac{r-t}{r+1}b_-+
        \frac{t+1}{r+1}b_+,
 \qquad -1\le t\le r,
 \label{eq:affine-interpolation}
\end{equation}
and put $u_t=a_t-\ell_t$.  Then $u_{-1}=u_r=0$.  If
$s=(b_+-b_-)/(r+1)$, the increments of $u$ are $d_j-s$.  Since
\[
 u_t=\sum_{j=-1}^{t-1}(d_j-s),
\]
Cauchy--Schwarz gives
$u_t^2\le(t+1)\sum_{j=-1}^{t-1}(d_j-s)^2$.  Since
$\sum_{t=0}^{r-1}(t+1)=r(r+1)/2\le(r+1)^2$, summing over $t$ gives
\begin{align}
 \sum_{t=0}^{r-1}u_t^2
 &\le(r+1)^2\sum_{j=-1}^{r-1}(d_j-s)^2 \notag\\
 &=(r+1)^2\left\{\sum_{j=-1}^{r-1}d_j^2
          -\frac{(b_+-b_-)^2}{r+1}\right\}
 \le(r+1)^2\sum_{j=-1}^{r-1}d_j^2.
 \label{eq:zero-affine}
\end{align}
For the affine part, convexity of $x\mapsto(x-c)^2$ and summation of the
interpolation weights give
\begin{equation}
 \sum_{t=0}^{r-1}(\ell_t-c)^2
 \le\frac r2\{(b_--c)^2+(b_+-c)^2\}.
 \label{eq:affine-convexity}
\end{equation}
Apply $(x+y)^2\le2x^2+2y^2$ to
$a_t-c=u_t+(\ell_t-c)$ and combine
\eqref{eq:zero-affine}--\eqref{eq:affine-convexity}.
\end{proof}

\subsection{Controlling the random bridge length}

The factor $(r+1)^2$ in \eqref{eq:deterministic-affine} is correlated with
the window energy.  We first mark a local increment and then integrate over
the two remaining excursion segments.

\begin{lemma}[Marked-word second-moment estimate]
\label{lem:marked-word}
With the excursion notation above,
\begin{equation}
 \E_v\left[\one_{\{L>n\}}(r+1)^2
          \sum_{j=-1}^{r-1}d_j^2\right]
 \le15\mathsf H_v^2
 \E_v\left[\one_{\{L>n\}}
          \sum_{j=-1}^{r-1}d_j^2\right].
 \label{eq:marked-word}
\end{equation}
\end{lemma}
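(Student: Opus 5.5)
The plan is to mark the increment $d_j$ and cut the excursion at that marked word. The factor $(r+1)^2$ then splits into a prefix length and a suffix length, and each is controlled by a second moment of a hitting time of $v$ through \eqref{eq:hitting-bound}. Under $\Pp_v$ on $\{L>n\}$, \eqref{eq:face-increment-dictionary} gives $d_j=\Delta_{j+1}$ for every $-1\le j\le r-1$. This increment is a function of the word
\[
W_j=(X_{j+1},\ldots,X_{j+1+n}).
\]
Put $\sigma=j+1$ for the prefix length and $\tau=L-(j+1+n)\ge0$ for the remaining time after the word until the return to $v$. Then $\sigma+\tau=L-n=r$, so $r+1=\sigma+\tau+1$, and by $(a+b+c)^2\le3(a^2+b^2+c^2)$ we get $(r+1)^2\le3(\sigma^2+\tau^2+1)$. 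It therefore suffices to show, for each marked term,
\[
\E_v\bigl[\one_{\{L>n\}}\sigma^2d_j^2\bigr]\le2\mathsf H_v^2\,\E_v\bigl[\one_{\{L>n\}}d_j^2\bigr],
\]
together with the same bound with $\sigma$ replaced by $\tau$, and then sum over $j$. The constant then comes out as $3(2\mathsf H_v^2+2\mathsf H_v^2+1)\le15\mathsf H_v^2$, using $\mathsf H_v\ge1$.

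\emph{Suffix.} Interchange the sum over $j$ with the expectation by writing everything as a sum over deterministic times $s=j+1$ of indicator events. The conditions are $X_1,\ldots,X_{s+n-1}\in B$ (or $s=0$), and the position $s+n$ is either in $B$ or is the return to $v$. Conditionally on $\mathcal F^X_{s+n}$, the strong Markov property at the deterministic time $s+n$ shows that $\tau$ is the entrance time $T_v$ of a fresh chain started at $X_{s+n}$. On the relevant event $\{s+n\le L\}$, the quantity $d_j^2$ and all constraints up to that time are $\mathcal F^X_{s+n}$-measurable. Hence \eqref{eq:hitting-bound} gives $\E[\tau^2\mid\mathcal F^X_{s+n}]\le2\mathsf H_v^2$, which is the suffix bound.

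\emph{Prefix.} Here I would use reversal. For fixed $s$ and a fixed admissible word $w=(x,\ldots)$ with first letter $x=X_s$, sum the path probability over all prefixes $v=y_0,y_1,\ldots,y_s=x$ with $y_1,\ldots,y_{s-1}\in B$. The word identity \eqref{eq:word-reversal-prelim} turns each prefix weight into $(\pi_x/\pi_v)$ times the probability of the reversed path, which is a path from $x$ that first enters $v$ at time $s$. Summing over $s$ then gives the total prefix mass $(\pi_x/\pi_v)\sum_s\Pp_x(T_v=s)=\pi_x/\pi_v$, and the $\sigma^2$-weighted mass $(\pi_x/\pi_v)\E_xT_v^2\le(\pi_x/\pi_v)\,2\mathsf H_v^2$. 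The middle word and the suffix factor out by the Markov property given $X_s=x$, and the quantity $d^2$ depends only on $w$. So the two sides of the prefix inequality have identical word-and-suffix factors and differ only by this prefix ratio, which gives the claimed bound; the case $s=0$, where $x=v$ and $\sigma=0$, is trivial.

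The main obstacle I expect is the bookkeeping at the boundary phases $j=-1$ and $j=r-1$. There the word begins at $v$, or ends exactly at the return to $v$ (so $\tau=0$), and the avoidance constraints on the word must be stated so that the constraint event factorizes across prefix, word and suffix. I would handle this by defining the admissible words explicitly: letters in $B$, except that the first letter equals $v$ when $s=0$, and the last letter may equal $v$. With that definition the event $\{L>n,\ 0\le s\le r\}$ becomes a disjoint union over $s$ of products of a prefix constraint, a word constraint and a suffix constraint.
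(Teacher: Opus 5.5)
Your proposal is correct and follows essentially the paper's route: mark the increment at its local word, split $(r+1)^2\le3(\sigma^2+\tau^2+1)$, bound the suffix by the entrance-time second moment from the word's last letter, and reverse the prefix by detailed balance into a first-hitting path from the word's first letter, which gives $3(2+2+1)\mathsf H_v^2\le15\mathsf H_v^2$; the only difference is that you absorb the entrance and exit edges into the same scheme (through $\sigma=0$ or $\tau=0$), whereas the paper treats them separately, by conditioning on $(X_0,\ldots,X_n)$ for the entrance edge and by time reversal for the exit edge. Two small corrections: the prefix comparison does not hold for each fixed $j$, where $\sigma=j+1$ is deterministic, but only after summing over the mark position at a fixed word, which is what your computation actually does; and for $s=0$ the last letter of the word must lie in $B$, since a last letter $v$ would mean $L=n$.
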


\begin{proof}
To avoid conditioning on a fixed phase, introduce the finite marked
excursion measure
\begin{equation}
 \mathfrak M_{\mathrm{int}}(G)
 =\E_v\left[\one_{\{L>n\}}
   \sum_{u=1}^{r-1}G(X_0,\ldots,X_L;u)\right].
 \label{eq:internal-marked-measure}
\end{equation}
When $r\le1$, this sum is empty.  Under this measure the marked phase is
part of the random object.  Its local word
\begin{equation}
 W_u=(X_u,X_{u+1},\ldots,X_{u+n})\in B^{n+1}.
 \label{eq:marked-local-word}
\end{equation}
determines $\Delta_u^2$.  Let
\[
 A=u,
 \qquad R=L-(u+n).
\]
These are the lengths of the two pieces outside the local word, and
$r=A+R$.  After normalizing the finite measure
$\mathfrak M_{\mathrm{int}}$ (equivalently, disintegrating it with respect
to $W_u$), conditional expectations given $W_u$ are well defined.  The two
pieces are conditionally independent.  The right piece
is a first-hitting path from $X_{u+n}$ to $v$.  Reversing the left piece by
detailed balance makes it a first-hitting path from $X_u$ to $v$.

Fix $W_u=(x_0,\ldots,x_n)$.  A compatible left segment has the form
\[
 v=z_0,z_1,\ldots,z_{A-1},z_A=x_0,
 \qquad z_1,\ldots,z_{A-1}\in B,
\]
and a compatible right segment has the form
\[
 x_n=y_0,y_1,\ldots,y_{R-1},y_R=v,
 \qquad y_1,\ldots,y_{R-1}\in B.
\]
The weight of the complete marked word is the product of its left, local,
and right transition weights.  Detailed balance gives
\begin{equation}
 \prod_{i=0}^{A-1}P(z_i,z_{i+1})
 =\frac{\pi_{x_0}}{\pi_v}
   \prod_{i=0}^{A-1}P(z_{i+1},z_i).
 \label{eq:left-word-reversal}
\end{equation}
The factor $\pi_{x_0}/\pi_v$, as well as the transition weight of the local
word, depends only on $W_u$.  Summing over all admissible $A$ and $R$ in
\eqref{eq:internal-marked-measure} therefore leaves the product of a
reversed left first-hit law and an ordinary right first-hit law.  This
proves both the asserted conditional independence and
\begin{equation}
 \E[A^2\mid W_u]\le2\mathsf H_v^2,
 \qquad
 \E[R^2\mid W_u]\le2\mathsf H_v^2.
 \label{eq:marked-excess-moments}
\end{equation}

Since $\Delta_u^2$ is measurable with respect to $W_u$ and
\[
 (r+1)^2=(A+R+1)^2\le3(A^2+R^2+1),
\]
Lemma~\ref{lem:hitting} and $\mathsf H_v\ge1$ give, under the marked
measure,
\begin{equation}
 \mathfrak M_{\mathrm{int}}\bigl((r+1)^2\Delta_u^2\bigr)
 \le15\mathsf H_v^2
       \mathfrak M_{\mathrm{int}}\bigl(\Delta_u^2\bigr).
 \label{eq:internal-mark-bound}
\end{equation}

For the entrance edge $\Delta_0$, condition on
$(X_0,\ldots,X_n)$.  On $L>n$, the excess $r=L-n$ is the hitting time of
$v$ from $X_n$.  Thus
\[
 \E[(r+1)^2\mid X_0,\ldots,X_n]
 \le2\mathsf H_v^2+2\mathsf H_v+1
 \le6\mathsf H_v^2
 \quad\text{on }\{L>n\}.
\]
The exit edge $\Delta_r$ is its
reverse-time copy.  Absorb both constants into $15$.  The integral in
\eqref{eq:internal-mark-bound} is exactly the sum over all internal marked
phases, and the entrance and exit estimates supply the two boundary
phases.  Their union is the sum in \eqref{eq:marked-word}.
\end{proof}

The selected marked edges are a subset of the phases of the return cycle.
Consequently, Palm inversion and nonnegativity give
\begin{equation}
 \frac1{\E_vL}\E_v\left[
  \one_{\{L>n\}}\sum_{j=-1}^{r-1}d_j^2\right]
 \le\E\Delta_0^2=2\cD_1(F).
 \label{eq:selected-edge-palm}
\end{equation}

\subsection{The affine boundary term}

The exit flow has the following Palm representation.  The event that a
stationary window exits the no-$v$ face is
\[
 \{X_0,\ldots,X_{n-1}\in B,\ X_n=v\}.
\]
By word reversal,
\begin{equation}
 \kappa
 =\pi_v\Pp_v(X_1,\ldots,X_n\in B)
 =\pi_v\Pp_v(L>n).
 \label{eq:kappa-palm}
\end{equation}
On $\{L>n\}$, each boundary value $b_-$ and $b_+$ has count law $\eta$;
for $b_-$ this follows by reversing the boundary word, and for $b_+$ it is
the direct exit word.

\begin{lemma}[Residual-life boundary estimate]
\label{lem:boundary-palm}
For every scalar $c$ and each sign $\sigma\in\{-,+\}$,
\begin{equation}
 \frac1{\E_vL}\E_v\left[
  \one_{\{L>n\}}r(b_\sigma-c)^2\right]
 \le\mathsf H_v\kappa\E_\eta(F-c)^2.
 \label{eq:boundary-palm}
\end{equation}
\end{lemma}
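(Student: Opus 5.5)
The plan is to condition on the boundary word and use that $r$ is a hitting time from that word's endpoint. Take $\sigma=+$ first. Under $\Pp_v$ on $\{L>n\}$, we have $b_+=H_{r+1}=F(K_{r+1})$, the count of the window $(X_{r+1},\dots,X_{L})$, which ends at the return time $L$. Rather than condition on the end, we reverse the whole excursion: detailed balance \eqref{eq:word-reversal-prelim} applied to the complete excursion word $v,x_1,\dots,x_{L-1},v$ shows that the excursion law under $\Pp_v$ is invariant under time reversal. This reversal maps $b_+$ to $b_-$ and leaves $r$ unchanged. It therefore suffices to treat $\sigma=-$, and the case $\sigma=+$ follows by symmetry.

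For $\sigma=-$ we condition on the initial word $(X_0,\dots,X_n)$ with $X_0=v$ and $X_1,\dots,X_n\in B$. The value $b_-=H_0$ is the count of $(X_0,\dots,X_{n-1})$, so it is measurable with respect to this word. On $\{L>n\}$ the strong (here simple) Markov property at time $n$ gives $r=L-n=\inf\{s\ge0:X_{n+s}=v\}$, and this is exactly $T_v$ for the chain restarted at $X_n\in B$. Hence
\[
 \E_v[r\mid X_0,\dots,X_n]=\E_{X_n}T_v\le\mathsf H_v\quad\text{on }\{L>n\}.
\]
Taking expectations,
\[
 \E_v\bigl[\one_{\{L>n\}}r(b_--c)^2\bigr]\le\mathsf H_v\,\E_v\bigl[\one_{\{L>n\}}(b_--c)^2\bigr]
 =\mathsf H_v\,\Pp_v(L>n)\,\E_\eta(F-c)^2,
\]
where the last equality uses the already noted fact that on $\{L>n\}$ the value $b_-$ has count law $\eta$.

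Finally, we divide by $\E_vL=1/\pi_v$ and use \eqref{eq:kappa-palm}, which gives $\pi_v\Pp_v(L>n)=\kappa$. This yields \eqref{eq:boundary-palm}. The step that needs the most care is checking that the conditional law of $b_-$ given $\{L>n\}$ is exactly $\eta$, i.e.\ that the normalization matches. Here $\eta$ is the exit marginal normalized by $\kappa$, and the reversal of the boundary word $(v,X_1,\dots,X_n)$ maps it to a stationary exit word $\{X_0,\dots,X_{n-1}\in B,\ X_n=v\}$. Under that reversal, the window $(X_0,\dots,X_{n-1})$ starting at $v$ corresponds to the count $K_1$ of the window $(X_1,\dots,X_n)$ ending at $v$. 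So the law of $b_-$ restricted to $\{L>n\}$, multiplied by $\pi_v$, is $\kappa\eta$. We verify this normalization directly from the word identity, and the remaining steps are routine.
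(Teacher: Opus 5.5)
Your proposal is correct and follows the paper's proof. You condition on the boundary word $(X_0,\ldots,X_n)$, so that $r=L-n$ is an entrance time to $v$ from $X_n\in B$ with conditional mean at most $\mathsf H_v$; you identify the restricted law of $b_-$ through $\pi_v\Pp_v(L>n)=\kappa$ and the exit law $\eta$; and you get the $b_+$ case by reversing the complete excursion, merely doing that reduction first rather than last.
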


\begin{proof}
For $b_-$, condition on the boundary word $(X_0,\ldots,X_n)$.  On
$L>n$, the residual $r=L-n$ is an ordinary hitting time of $v$ from
$X_n\in B$, and hence its conditional mean is at most $\mathsf H_v$.
Therefore
\[
 \E_v[\one_{\{L>n\}}r(b_--c)^2]
 \le\mathsf H_v\E_v[\one_{\{L>n\}}(b_--c)^2].
\]
Use $(\E_vL)^{-1}=\pi_v$, \eqref{eq:kappa-palm}, and the boundary count law
$\eta$ to obtain \eqref{eq:boundary-palm}.  The $b_+$ estimate follows by
reversing the complete excursion.
\end{proof}

\begin{lemma}[Affine killed-excursion inequality]
\label{lem:affine-killed}
For every scalar $c$,
\begin{equation}
 \E\left[\one_{\{K_{0,v}=0\}}(F(K_0)-c)^2\right]
 \le60\mathsf H_v^2\cD_1(F)
 +2\mathsf H_v\kappa\E_\eta(F-c)^2.
 \label{eq:affine-killed}
\end{equation}
\end{lemma}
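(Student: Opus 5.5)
Start from the face identity \eqref{eq:face-palm}: the left side of \eqref{eq:affine-killed} equals the Palm average $(\E_vL)^{-1}\E_v[\one_{\{L>n\}}\sum_{t=0}^{r-1}(a_t-c)^2]$. On $\{L>n\}$ we have $r=L-n\ge1$, so Lemma~\ref{lem:deterministic-affine} applies pathwise to the excursion sequence \eqref{eq:face-phase-dictionary}. It splits the integrand into a bridge part $2(r+1)^2\sum_{j=-1}^{r-1}d_j^2$ and a boundary part $r\{(b_--c)^2+(b_+-c)^2\}$. We then take $\E_v[\one_{\{L>n\}}\,\cdot\,]$ of each part, divide by $\E_vL$, and estimate the two parts separately.

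For the bridge part, Lemma~\ref{lem:marked-word} removes the random factor $(r+1)^2$ at the cost $15\mathsf H_v^2$:
\[
 \frac{2}{\E_vL}\E_v\Bigl[\one_{\{L>n\}}(r+1)^2\sum_{j=-1}^{r-1}d_j^2\Bigr]
 \le\frac{30\mathsf H_v^2}{\E_vL}\E_v\Bigl[\one_{\{L>n\}}\sum_{j=-1}^{r-1}d_j^2\Bigr].
\]
By the dictionary \eqref{eq:face-increment-dictionary}, the increments $d_j$ are distinct physical increments $\Delta_0,\ldots,\Delta_r$ of the return cycle. Hence \eqref{eq:selected-edge-palm} bounds the normalized sum by $2\cD_1(F)$, and this part contributes at most $60\mathsf H_v^2\cD_1(F)$.

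For the boundary part, apply Lemma~\ref{lem:boundary-palm} once with $\sigma=-$ and once with $\sigma=+$. Each gives $\mathsf H_v\kappa\E_\eta(F-c)^2$, so their sum is the term $2\mathsf H_v\kappa\E_\eta(F-c)^2$. If $\kappa=0$, then $\Pp_v(L>n)=0$ by \eqref{eq:kappa-palm}, so both sides' relevant terms vanish under the stated convention. Adding the two contributions gives \eqref{eq:affine-killed}.

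The only delicate point is making sure the constants match. The factor $2$ from Lemma~\ref{lem:deterministic-affine} multiplies $15\mathsf H_v^2$ and then the $2\cD_1$ from \eqref{eq:selected-edge-palm}, giving exactly $60$. We also need that the phases $j=-1,\ldots,r-1$ are non-overlapping members of a single return cycle, which is what makes the Palm inequality valid by nonnegativity.
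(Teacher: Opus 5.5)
Your proposal is correct and follows the paper's own proof essentially verbatim. You insert the deterministic affine bridge into the face Palm identity, and you control the bridge part with the marked-word lemma and the selected-edge Palm bound, giving the constant $2\cdot15\cdot2=60$. You then apply the residual-life boundary lemma once to each of $b_-$ and $b_+$.
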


\begin{proof}
Insert Lemma~\ref{lem:deterministic-affine} into the Palm identity
\eqref{eq:face-palm}.  For the zero-boundary part, use
Lemma~\ref{lem:marked-word} and \eqref{eq:selected-edge-palm}; its constant
is $2\cdot15\cdot2=60$.  Apply Lemma~\ref{lem:boundary-palm} to the two
affine boundary values.  Their combined contribution is
$2\mathsf H_v\kappa\E_\eta(F-c)^2$.
\end{proof}

\subsection{From the exit law to the endpoint-rooted law}

It remains to control the affine boundary term when $c=c_v$.  Define
\[
 p_n=\Pp_v(X_1,\ldots,X_n\in B).
\]
By \eqref{eq:kappa-palm}, $\kappa=\pi_vp_n$.  On a two-sided Palm path with
$X_0=v$, if $p_n=0$, then $\kappa=0$ and the claim below is trivial.
Assume $p_n>0$ and work under the conditioned law
\begin{equation}
 \Qq=\Pp_v(\,\cdot\mid X_{-n},\ldots,X_{-1}\in B).
 \label{eq:trace-Q}
\end{equation}
The condition at time $-n$ records the letter removed from the exiting
window.  Indeed, if
\[
 E^{\mathrm{exit}}
 =\{X_0,\ldots,X_{n-1}\in B,\ X_n=v\},
\]
then $\kappa=\Pp(E^{\mathrm{exit}})$ and
$\eta=\operatorname{Law}(K_1\mid E^{\mathrm{exit}})$.  Reversing the word
on $[0,n]$ and re-rooting its endpoint at time zero gives, for every count
vector $\ell$,
\[
 \eta(\ell)
 =\Pp_v\left(
 e_v+\sum_{i=-n+1}^{-1}e_{X_i}=\ell
 \,\middle|\,
 X_{-n},\ldots,X_{-1}\in B\right).
\]
The conditioning has probability $p_n$ by reversal.  For
$0\le j\le n-1$, put
\begin{equation}
 s_j=-n+1+j,
 \qquad
 Z_j=K_{s_j}
 =e_v+\sum_{i=-n+1+j}^{-1}e_{X_i}
       +\sum_{i=1}^{j}e_{X_i}.
 \label{eq:trace-windows}
\end{equation}
Empty sums are omitted.  The preceding identity shows that $Z_0$ has law
$\eta$ under $\Qq$.  Since the conditioning concerns only the past, the
future from $X_0=v$ remains a fresh chain with law $\Pp_v$, so
\[
 \operatorname{Law}_{\Qq}(Z_{n-1})=\zeta_v^{\mathrm{end}},
 \qquad \E_{\Qq}F(Z_{n-1})=c_v.
\]
The conditioned past and fresh future are independent given the fixed
value $X_0=v$.  Hence
\begin{equation}
 \E_\eta(F-c_v)^2
 \le\E_\Qq\{F(Z_0)-F(Z_{n-1})\}^2.
 \label{eq:trace-independent}
\end{equation}

\begin{lemma}[Disjoint first-anchor trace]
\label{lem:trace-propagation}
With $c_v$ as in \eqref{eq:endpoint-law},
\begin{equation}
 \kappa\E_\eta(F-c_v)^2\le2(n-1)\cD_1(F).
 \label{eq:trace-propagation}
\end{equation}
\end{lemma}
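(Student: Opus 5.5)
Starting from \eqref{eq:trace-independent}, the windows $Z_0,Z_1,\ldots,Z_{n-1}$ are consecutive actual windows $K_{s_0},\ldots,K_{s_{n-1}}$ of the two-sided Palm path, since $s_{j+1}=s_j+1$. So $F(Z_{n-1})-F(Z_0)=\sum_{j=0}^{n-2}\Delta_{s_j}$ is a telescoping sum of $n-1$ physical increments. Cauchy--Schwarz gives
\[
 \kappa\E_\eta(F-c_v)^2
 \le\kappa(n-1)\sum_{j=0}^{n-2}\E_\Qq\Delta_{s_j}^2 .
\]
It remains to show $\kappa\sum_{j=0}^{n-2}\E_\Qq\Delta_{s_j}^2\le2\cD_1(F)=\E\Delta_0^2$. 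The bound is then immediate, with constant $n-1$ from Cauchy--Schwarz and $2$ from $\E\Delta_0^2=2\cD_1(F)$.

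To prove this, undo the conditioning. Since $\Qq$ conditions $\Pp_v$ on an event of probability $p_n$ and $\kappa=\pi_vp_n$, we have
\[
 \kappa\E_\Qq\Delta_{s_j}^2=\pi_v\E_v\bigl[\one_{\{X_{-n},\ldots,X_{-1}\in B\}}\Delta_{s_j}^2\bigr].
\]
By stationarity, shift time by $-s_j$ so that the increment sits at time $0$. The left side then becomes $\E\bigl[\Delta_0^2\one_{E_j}\bigr]$, where $E_j$ is the stationary event that the anchor $v$ sits at time $-s_j=n-1-j$ and the $n$ sites immediately before it avoid $v$. Equivalently, $n-1-j$ is the \emph{first} visit to $v$ at or after a certain reference time, and the preceding $n$ letters lie in $B$.

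The key claim is that the events $E_0,\ldots,E_{n-2}$ are pairwise disjoint. On $E_j$ the first $v$ after time $n-1-j-n=-1-j$ occurs exactly at time $n-1-j$. Now suppose $E_j$ and $E_{j'}$ both hold with $j<j'$. Then $v$ occurs at time $n-1-j'$, and this time lies in the $v$-free stretch $[-1-j,\,n-2-j]$ demanded by $E_j$, because $-1-j\le n-1-j'\le n-2-j$ (using $j'\le n-2$ for the left inequality). This is a contradiction. Hence $\sum_j\one_{E_j}\le1$, and summing gives $\sum_j\E[\Delta_0^2\one_{E_j}]\le\E\Delta_0^2$. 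This is the Palm-inversion-with-subset-of-phases argument from \eqref{eq:palm-singleton}, applied here with the observable $\Delta_0^2$ depending on a finite word.

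The main obstacle is the bookkeeping of time indices in the shift and the disjointness check. In particular one must confirm that the conditioning window $X_{-n},\ldots,X_{-1}$, together with $X_0=v$, translates into a single "first entrance into $v$ after an $n$-long $B$-run" event whose anchor position is determined by $j$. I would verify this carefully with the explicit intervals above. The reversal/fresh-future structure is not needed here, since it was already used in \eqref{eq:trace-independent}.
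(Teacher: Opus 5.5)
Your proof is correct and follows the paper's argument: telescoping with Cauchy--Schwarz along the consecutive windows $Z_j$, undoing the conditioning via $\kappa=\pi_vp_n$, translating each edge to time zero, and using disjointness of the shifted anchor events. The only cosmetic difference is in the disjointness step. The paper first enlarges each shifted event to $E_q=\{X_1,\ldots,X_{q-1}\in B,\ X_q=v\}$, dropping the negative-time and time-zero conditions, so that disjointness is immediate from uniqueness of the first visit to $v$ after time zero. You instead check disjointness of the unenlarged events directly.
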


\begin{proof}
The $Z_j$ are consecutive physical count windows, so
\begin{equation}
 \{F(Z_0)-F(Z_{n-1})\}^2
 \le(n-1)\sum_{j=0}^{n-2}
       \{F(Z_{j+1})-F(Z_j)\}^2.
 \label{eq:trace-telescope}
\end{equation}
Fix $j$ and translate the edge starting at $s_j$ to time zero.  Before
translation,
\begin{align}
 &\kappa\E_\Qq\{F(Z_{j+1})-F(Z_j)\}^2 \notag\\
 &\quad=
 \E\left[\one_{\{X_0=v\}}
          \one_{\{X_{-n},\ldots,X_{-1}\in B\}}
          \Delta_{s_j}^2\right].
 \label{eq:trace-before-shift}
\end{align}
After translation, the central anchor formerly at time zero is at
\begin{equation}
 q=-s_j=n-1-j.
 \label{eq:trace-q-index}
\end{equation}
The old avoidance interval $[-n,-1]$ becomes
$[-1-j,q-1]$.  After translation one has the explicit inclusion
\[
 \{X_q=v,\ X_{-1-j},\ldots,X_{q-1}\in B\}
 \subseteq
 \{X_1,\ldots,X_{q-1}\in B,\ X_q=v\}=E_q.
\]
Thus dropping its negative-time sites and the additional condition at time
zero gives
\begin{equation}
 \kappa\E_\Qq\{F(Z_{j+1})-F(Z_j)\}^2
 \le\E\left[\Delta_0^2\one_{E_q}\right],
 \quad
 E_q=\{X_1,\ldots,X_{q-1}\in B,\ X_q=v\}.
 \label{eq:trace-marked-edge}
\end{equation}
Because $Z_j$ begins at $-n+1+j$, one has $q=n-1-j$.  Thus, as $j$ runs
from $0$ to $n-2$, $q$ runs from $n-1$ down to $1$.

The events $E_q$, $1\le q\le n-1$, are disjoint: each specifies the first
visit to $v$ after time zero.  Thus summing \eqref{eq:trace-marked-edge}
over $j$ gives at most $\E\Delta_0^2=2\cD_1(F)$.  Combine this with
\eqref{eq:trace-independent} and \eqref{eq:trace-telescope}.
\end{proof}

\begin{corollary}[Centered no-anchor estimate]
\label{cor:no-anchor}
For every count function $F$,
\begin{equation}
 \E\left[\one_{\{K_{0,v}=0\}}(F(K_0)-c_v)^2\right]
 \le\{60\mathsf H_v^2+4(n-1)\mathsf H_v\}\cD_1(F).
 \label{eq:no-anchor}
\end{equation}
\end{corollary}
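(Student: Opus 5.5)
The corollary should follow by combining the affine killed-excursion inequality with the disjoint first-anchor trace. The plan is to specialize Lemma~\ref{lem:affine-killed} to the center $c=c_v$ defined in \eqref{eq:endpoint-law}. That lemma holds for every scalar $c$, so this choice is allowed, and it gives
\[
 \E\left[\one_{\{K_{0,v}=0\}}(F(K_0)-c_v)^2\right]
 \le60\mathsf H_v^2\cD_1(F)
 +2\mathsf H_v\,\kappa\E_\eta(F-c_v)^2.
\]
The only term left to control is the affine boundary term $\kappa\E_\eta(F-c_v)^2$.

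For that term we invoke Lemma~\ref{lem:trace-propagation}, which bounds it by $2(n-1)\cD_1(F)$. This lemma uses the same endpoint-rooted center $c_v$: under the conditioned law $\Qq$, the initial window $Z_0$ has law $\eta$, the terminal window $Z_{n-1}$ has law $\zeta_v^{\mathrm{end}}$, and the two are independent. Substituting gives a boundary contribution of at most $2\mathsf H_v\cdot2(n-1)\cD_1(F)=4(n-1)\mathsf H_v\cD_1(F)$. Adding the zero-boundary contribution $60\mathsf H_v^2\cD_1(F)$ gives exactly \eqref{eq:no-anchor}.

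Two degenerate cases need a remark.

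\begin{itemize}
\item If $\kappa=0$, the convention fixed after \eqref{eq:exit-law} makes the boundary term vanish. Lemma~\ref{lem:affine-killed} then already gives the bound with only the $60\mathsf H_v^2$ term.
\item If the no-$v$ face has zero stationary mass, the left side is zero and there is nothing to prove.
\end{itemize}

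There is no genuine obstacle here. The one point to check is that the $c$ appearing in Lemma~\ref{lem:affine-killed} and the $c_v$ in Lemma~\ref{lem:trace-propagation} are the same scalar, so that no extra variance term of the form $(c-c_v)^2$ appears. The statement was arranged precisely so that this holds.
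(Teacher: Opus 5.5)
Your proposal is correct and is exactly the paper's argument: set $c=c_v$ in Lemma~\ref{lem:affine-killed}, then bound the resulting boundary term $2\mathsf H_v\kappa\E_\eta(F-c_v)^2$ by Lemma~\ref{lem:trace-propagation}, which gives $2\mathsf H_v\cdot2(n-1)\cD_1(F)=4(n-1)\mathsf H_v\cD_1(F)$. Your handling of the degenerate cases, and your point that both lemmas use the same center $c_v$, match the conventions the paper sets up.
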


\begin{proof}
Use $c=c_v$ in Lemma~\ref{lem:affine-killed} and insert
Lemma~\ref{lem:trace-propagation}.
\end{proof}

\section{Transport to the first anchor}
\label{sec:first-anchor}

Define the entrance time
\begin{equation}
 T=\inf\{t\ge0:X_t=v\},
 \label{eq:first-anchor-T}
\end{equation}
and set
\[
 \cA_1=\{T\le n-1\}=\{K_{0,v}\ge1\},
 \qquad p_v=\Pp(\cA_1).
\]
On $\cA_1$, $K_T$ is a length-$n$ window rooted at $v$.  The next lemma
controls the energy needed to move from $K_0$ to this rooted window.

\begin{lemma}[First-anchor transport]
\label{lem:first-anchor}
One has
\begin{equation}
 \E\left[\one_{\cA_1}(H_0-H_T)^2\right]
 \le2(n\mathsf H_v+\mathsf H_v^2)\cD_1(F).
 \label{eq:first-anchor-energy}
\end{equation}
Conditionally on $\cA_1$, $K_T$ has law $\zeta_v^{\mathrm{end}}$ and its
post-hit word is independent of the pre-hit event.  Consequently,
\begin{equation}
 \E\left[\one_{\cA_1}(F(K_0)-c_v)^2\right]
 \le4(n\mathsf H_v+\mathsf H_v^2)\cD_1(F)
      +2p_vV_v^{\mathrm{end}}.
 \label{eq:first-anchor-centered}
\end{equation}
\end{lemma}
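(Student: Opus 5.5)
The plan is to prove the transport estimate \eqref{eq:first-anchor-energy} by telescoping along the first $T$ window shifts and then charging each increment to the age of the current anchor-free stretch. The centered estimate \eqref{eq:first-anchor-centered} should then follow from the strong Markov property at $T$.

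\textbf{Step 1: telescoping and reindexing.} On $\cA_1$ we have $0\le T\le n-1$ and $H_0-H_T=-\sum_{t=0}^{T-1}\Delta_t$. Cauchy--Schwarz gives
\[
 (H_0-H_T)^2\le T\sum_{t=0}^{T-1}\Delta_t^2 .
\]
Exchanging the order of summation gives
\[
 \E[\one_{\cA_1}(H_0-H_T)^2]\le\sum_{t=0}^{n-2}\E\bigl[\Delta_t^2\,T\,\one_{\{t<T\le n-1\}}\bigr].
\]
On $\{t<T\}$ write $T=(t+1)+(T-t-1)$. The first piece is the age of the $v$-free stretch ending at time $t$. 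The second piece is bounded by $n$, because $T\le n-1$.

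\textbf{Step 2: stationary shift to time zero.} Translate each summand so that the edge $\Delta_t$ sits at time $0$. Then $\{t<T\}$ becomes $\{X_{-t},\ldots,X_0\ne v\}$. Summing over $t$ and writing $\mathsf a=\inf\{s\ge0:X_{-s}=v\}$ for the backward entrance time, the total is at most
\[
 \E\Bigl[\Delta_0^2\sum_{t=0}^{\mathsf a-1}\bigl((t+1)+n\bigr)\Bigr]
 \le\E\Bigl[\Delta_0^2\Bigl(\tfrac{\mathsf a(\mathsf a+1)}2+n\,\mathsf a\Bigr)\Bigr].
\]
Now condition on the local word $(X_0,\ldots,X_n)$, which determines $\Delta_0^2$. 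Given $X_0$, the past is conditionally independent of the future, and by reversibility the reversed past is a $P$-chain started from $X_0$. So $\mathsf a$ is an ordinary entrance time to $v$ from $X_0$. Lemma~\ref{lem:hitting} gives $\E[\mathsf a\mid X_0]\le\mathsf H_v$ and $\E[\mathsf a^2\mid X_0]\le2\mathsf H_v^2$. Since $\mathsf H_v\ge1$, we have $\E[\mathsf a(\mathsf a+1)/2\mid X_0]\le\mathsf H_v^2+\mathsf H_v/2$. The total coefficient is therefore at most $\mathsf H_v^2+(n+1)\mathsf H_v$.

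Using $\E\Delta_0^2=2\cD_1(F)$ then gives $2(\mathsf H_v^2+(n+1)\mathsf H_v)\cD_1(F)$. This exceeds \eqref{eq:first-anchor-energy} by $2\mathsf H_v\cD_1(F)$, so a sharper split is needed. Since $T\le n-1$, one has $T-t-1\le n-2-t$, and this can replace the crude bound $n$ in the second piece, with the saving recovered from the summation range. Alternatively one can bound $T\le\min(n-1,\,t+1+(T-t-1))$ directly. I expect this constant bookkeeping to be the only delicate point; the structure of the bound is already $n\mathsf H_v+\mathsf H_v^2$.

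\textbf{Step 3: the centered estimate.} $T$ is a stopping time for $\mathcal F_t^X$, and $\cA_1=\{T\le n-1\}\in\mathcal F_T^X$. The window $K_T=\sum_{j<n}e_{X_{T+j}}$ is a functional of the post-$T$ path. By the strong Markov property recorded in Section~\ref{sec:preliminaries}, on $\{X_T=v\}$ this window has law $\zeta_v^{\mathrm{end}}$ and is independent of $\mathcal F_T^X$. Hence
\[
 \E[\one_{\cA_1}(H_T-c_v)^2]=p_vV_v^{\mathrm{end}}.
\]
Finally, writing $F(K_0)-c_v=(H_0-H_T)+(H_T-c_v)$ and applying $(x+y)^2\le2x^2+2y^2$ together with \eqref{eq:first-anchor-energy} yields \eqref{eq:first-anchor-centered}.
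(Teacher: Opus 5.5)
Your argument is correct and is essentially the paper's: Cauchy--Schwarz telescoping, shifting each edge to time zero, controlling the backward age through an entrance time of the reversed chain via Lemma~\ref{lem:hitting}, and then the strong Markov property at $T$. The paper additionally splits by the disjoint forward first-hit events $E_q^+$, which your deterministic forward bound makes unnecessary. The bookkeeping you flag closes exactly as you suggest: bounding the whole multiplicity by $T\le n-1$ gives $(n-1)\E[\Delta_0^2\min(\mathsf a,n-1)]\le 2(n-1)\mathsf H_v\cD_1(F)$, which is even stronger than \eqref{eq:first-anchor-energy} and needs no second moment.
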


\begin{proof}
The contribution of $T=0$ is zero.  On $\{T=t\}$, $1\le t\le n-1$,
Cauchy--Schwarz gives
\begin{equation}
 (H_0-H_t)^2
 \le t\sum_{s=0}^{t-1}\Delta_s^2.
 \label{eq:first-anchor-telescope}
\end{equation}
Expanding by $t$ and $s$, translating the edge indexed by $s$ to time
zero, and setting $q=t-s$, we obtain the exact upper sum
\begin{align}
 &\E\left[\one_{\cA_1}(H_0-H_T)^2\right] \notag\\
 &\quad\le
 \sum_{q=1}^{n-1}\sum_{s=0}^{n-1-q}(s+q)
 \E\left[
  \Delta_0^2\one_{E_q^+}
  \one_{\{X_{-s},\ldots,X_{-1}\in B\}}
 \right],
 \label{eq:first-anchor-double-sum}
\end{align}
where the second avoidance indicator is one when $s=0$, and
\begin{equation}
 E_q^+=\{X_0,X_1,\ldots,X_{q-1}\in B,\ X_q=v\}.
 \label{eq:first-anchor-Eq}
\end{equation}
The events $E_q^+$ are mutually disjoint.

Let
\begin{equation}
 R^-=\inf\{r\ge1:X_{-r}=v\}.
 \label{eq:backward-hit}
\end{equation}
For fixed translated coordinates and fixed $q$, the exact admissible range
of the past multiplicity is
\begin{equation}
 0\le s\le\min\{n-1-q,R^--1\}.
 \label{eq:first-anchor-exact-range}
\end{equation}
Extending this range to $0\le s\le R^--1$ gives the deterministic estimate
\begin{align}
 \sum_{s=0}^{R^--1}(s+q)
 &=qR^-+\frac{R^-(R^--1)}2
 \le qR^-+\frac12(R^-)^2.
 \label{eq:first-anchor-multiplicity}
\end{align}

Condition on
$\mathcal G=\sigma(X_0,X_1,\ldots,X_n)$.  This sigma-field determines
$\Delta_0^2$ and every event $E_q^+$.  Given $\mathcal G$, the past depends
on the future only through $X_0$, and by reversibility its transition kernel
is $P$.  On $E_q^+$, $X_0\in B$, so $R^-$ is an ordinary entrance time to
$v$, not a return time.  Lemma~\ref{lem:hitting} therefore yields
\begin{equation}
 \E[R^-\mid\mathcal G]\le\mathsf H_v,
 \qquad
 \E[(R^-)^2\mid\mathcal G]\le2\mathsf H_v^2.
 \label{eq:backward-hit-moments}
\end{equation}
Use \eqref{eq:first-anchor-multiplicity}--\eqref{eq:backward-hit-moments}
in \eqref{eq:first-anchor-double-sum}.  The conditional multiplicity is at
most $q\mathsf H_v+\mathsf H_v^2$.  Since $q\le n-1$ and the $E_q^+$ are
disjoint,
\[
 \E\left[\one_{\cA_1}(H_0-H_T)^2\right]
 \le(n\mathsf H_v+\mathsf H_v^2)\E\Delta_0^2,
\]
which is \eqref{eq:first-anchor-energy}.

The time $T$ is a stopping time, and $X_T=v$.  The strong Markov property
says that
\[
 (X_T,X_{T+1},\ldots,X_{T+n-1})
\]
is an unconditioned chain started at $v$, independently of the event
$\{T\le n-1\}$.  Therefore $K_T$ has law
$\zeta_v^{\mathrm{end}}$ conditionally on $\cA_1$, with no renewal-length
bias.  Finally,
\[
 (H_0-c_v)^2\le2(H_0-H_T)^2+2(H_T-c_v)^2.
\]
Average, use \eqref{eq:first-anchor-energy}, and note that the second term
is $2p_vV_v^{\mathrm{end}}$.
\end{proof}

\section{An unweighted reverse-excursion inequality}
\label{sec:reverse-bridge}

Before treating the weighted endpoint term, we isolate the reverse-time
argument for complete excursions in an unweighted form.  The result is
stated for a general partition
$\Om=\cA\sqcup\cB$.  Set
\begin{equation}
 \tau_\cA^+=\inf\{t\ge1:X_t\in\cA\},
 \qquad
 T_\cA=\inf\{t\ge0:X_t\in\cA\},
 \qquad
 \mathsf H_\cA=\max_{x\in\cB}\E_xT_\cA.
 \label{eq:general-hitting}
\end{equation}
The complete excursion trace, truncated at the window length, is
\begin{equation}
 \mathfrak T_{\cA,\cB,n}(F)
 =\sum_{a\in\cA}\pi_a\E_a\left[
  \one_{\{X_1\in\cB,\ 2\le\tau_\cA^+\le n\}}
  (H_{\tau_\cA^+}-H_0)^2\right].
 \label{eq:complete-trace}
\end{equation}

\subsection{A stopped Carleson--Hardy inequality}

\begin{lemma}[Stopped Carleson--Hardy inequality]
\label{lem:stochastic-hardy}
Let $T$ be an integrable stopping time for a filtration
$(\mathcal F_k)_{k\ge0}$, and let $(V_k)_{k\ge0}$ be adapted.  Suppose that
for every $k\ge1$,
\begin{equation}
 \E[T-k\mid\mathcal F_k]\le H
 \quad\text{on }\{T\ge k\}.
 \label{eq:residual-stopping}
\end{equation}
Then
\begin{equation}
 \E(V_T-V_0)^2
 \le4(H+1)\E\sum_{k=1}^{T}(V_k-V_{k-1})^2.
 \label{eq:stochastic-hardy}
\end{equation}
\end{lemma}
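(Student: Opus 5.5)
The plan is a one-line pathwise identity, a weighted Cauchy--Schwarz whose weight is the residual life $T-k+1$, and then an absorption step. The constant $4(H+1)$ has the shape $2\sqrt{A}\sqrt{B}$ followed by $(\E Y^2)^{1/2}\le 2\sqrt A$, which is what this route would produce. Write $D_k=V_k-V_{k-1}$ and $Y=V_T-V_0$. I would reduce to bounded $T$ first, by stopping at $T\wedge N$ and letting $N\to\infty$ with monotone convergence on the right. This truncation is compatible with \eqref{eq:residual-stopping}, since on $\{T\wedge N\ge k\}$ one has $\E[T\wedge N-k\mid\mathcal F_k]\le\E[T-k\mid\mathcal F_k]\le H$. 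The bounded case still needs $\E Y^2<\infty$ for the absorption step; this holds once one may assume $\E\sum_{k\le T}D_k^2<\infty$, because $Y^2\le T\sum_{k\le T}D_k^2$. The left side is then handled by Fatou.

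The pathwise starting point is the telescoping identity
\[
 Y^2=\sum_{k=1}^{T}\bigl\{(V_T-V_{k-1})^2-(V_T-V_k)^2\bigr\}
 =\sum_{k=1}^{T}D_k\bigl(2(V_T-V_{k-1})-D_k\bigr)
 \le2\sum_{k=1}^{T}D_k\,(V_T-V_{k-1}).
\]
I would apply Cauchy--Schwarz to the last sum in $L^2$ of path and index jointly, with weight $T-k+1$:
\[
 \E Y^2\le2\Bigl(\E\sum_{k=1}^T(T-k+1)D_k^2\Bigr)^{1/2}
 \Bigl(\E\sum_{k=1}^T\frac{(V_T-V_{k-1})^2}{T-k+1}\Bigr)^{1/2}.
\]
The first factor is where the hypothesis enters. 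Since $D_k^2\one_{\{T\ge k\}}$ is $\mathcal F_k$-measurable, conditioning on $\mathcal F_k$ gives
\[
 \E\sum_k(T-k+1)D_k^2\le(H+1)\,\E\sum_{k=1}^TD_k^2.
\]
This is the only place the residual bound is used, which is why the estimate is linear in $H$ rather than in $\E T^2$.

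The second factor is the main obstacle. The goal is the Carleson-type bound
\[
 B:=\E\sum_{k=1}^T\frac{(V_T-V_{k-1})^2}{T-k+1}\le\E Y^2 .
\]
Granting it, $\E Y^2\le2\sqrt{(H+1)\E\sum D_k^2}\,(\E Y^2)^{1/2}$, and dividing by $(\E Y^2)^{1/2}$ gives exactly \eqref{eq:stochastic-hardy}. The trouble is that this bound is false pathwise. If all the increment sits in $D_1$, the sum is $Y^2$ times a harmonic number of size about $\log T$. So it cannot be proved path by path.

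My first attempt would be to avoid bounding $B$ directly and instead change the splitting. I would write $V_T-V_{k-1}=D_k+(V_T-V_k)$ and treat the cross terms $\E[D_k(V_T-V_k)]$ by conditioning on $\mathcal F_k$. That replaces $V_T-V_k$ by a forward conditional mean $M_k=\E[V_T\mid\mathcal F_k]-V_k$. I would then look for a Doob-type maximal or Hardy inequality for the process $M_k$, weighted by the residual life, that plays the role of $B\le\E Y^2$. If that fails, the fallback is to repeat the argument from the forward identity $Y^2=\sum_k(2S_{k-1}D_k+D_k^2)$ with $S_k=V_k-V_0$. There I would weight by $T-k+1$ and use a reverse-time discrete Hardy inequality $\sum_m(A_m/m)^2\le4\sum a_m^2$ on tail sums. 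This accepts a constant of the form $c(H+1)$ first and sharpens it to $4(H+1)$ afterwards.
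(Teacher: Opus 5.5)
Your main route fails at the step you flag. The problem is not only that the bound is non-pathwise: $B\le\E Y^2$ is false in expectation too. Take $T\equiv N$ (the hypothesis then holds with $H=N-1$) and the deterministic sequence $V_k=\one_{\{k=N\}}$. Then $Y=1$ while $B=\sum_{i=1}^{N}i^{-1}\sim\log N$. Your illustration puts the jump at the wrong end: a jump in $D_1$ gives $B=Y^2/T$, and the harmonic sum comes from a jump in $D_T$. For a deterministic path, ``pathwise'' and ``in expectation'' coincide.

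Since $B/\E Y^2$ is unbounded, no constant rescues the absorption step. Pairing the residual-life weight $T-k+1$ with $D_k^2$, and its reciprocal with the pathwise tails $V_T-V_{k-1}$, is the wrong split.

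Of your fallbacks, the Hardy one closes only under two conditions. The Cauchy--Schwarz weight must be $(T-k+1)^2$, and the tails must be the backward ones $V_T-V_{k-1}$. With forward partial sums $S_{k-1}$, Hardy runs in elapsed time, which the hypothesis does not control. Under those conditions, pathwise Hardy gives $\sum_{k\le T}(V_T-V_{k-1})^2/(T-k+1)^2\le4\sum_{k\le T}D_k^2$. Writing $(T-k+1)^2=\sum_{j=k}^{T}(2(T-j)+1)$ and conditioning at each $j$ gives $\E[(T-k+1)^2\mid\mathcal F_k]\le(2H+1)(H+1)$ on $\{T\ge k\}$. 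Together these prove $\E Y^2\le4\sqrt{(2H+1)(H+1)}\,\E\sum_{k\le T}D_k^2$. That is the right order but not the stated constant, and you give no mechanism for ``sharpening to $4(H+1)$ afterwards''.

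The missing idea, which the paper uses, is to dualize and put the residual-life condition on a martingale rather than on the increments. With $\nu_k=\one_{\{T\ge k\}}$ one has $Y=\sum_k\nu_kD_k$. For $G\in L^2$ and $M_k=\E[G\mid\mathcal F_k]$, $\E[GY]=\E\sum_k\nu_kD_kM_k$, because $\nu_kD_k$ is $\mathcal F_k$-measurable. An \emph{unweighted} Cauchy--Schwarz then reduces everything to the Carleson embedding
$\E\sum_k\nu_kM_k^2\le(H+1)\E\max_k M_k^2\le4(H+1)\E G^2$.

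The first inequality comes from stopping at $S_\lambda=\inf\{k\ge1:|M_k|>\lambda\}$. One uses $\E[\sum_{k\ge S}\nu_k\mid\mathcal F_S]=\E[(T-S+1)\one_{\{S\le T\}}\mid\mathcal F_S]\le(H+1)\one_{\{S\le T\}}$ and integrates the resulting level-set bound against $2\lambda\,d\lambda$. The second inequality is Doob's $L^2$ maximal inequality, which is where the $4$ comes from.

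Your first fallback points in this direction, but this embedding is the whole content of the lemma. Also, your $M_k=\E[V_T\mid\mathcal F_k]-V_k=\E[Y\mid\mathcal F_k]-(V_k-V_0)$ is not a martingale, so Doob does not apply to it. Working with $\E[Y\mid\mathcal F_k]$ itself (that is, $G=Y$) makes the telescoping identity unnecessary. Your truncation to $T\wedge N$ followed by Fatou is fine and matches the paper.
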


\begin{proof}
Put $\nu_k=\one_{\{T\ge k\}}$.  If $S\ge1$ is any stopping time, then
on $\{S\le T\}$,
\[
 \sum_{k\ge S}\nu_k=T-S+1.
\]
The optional version of \eqref{eq:residual-stopping}, obtained by
partitioning over the values of $S$, therefore gives
\begin{equation}
 \E\left[\sum_{k\ge S}\nu_k\,\middle|\,\mathcal F_S\right]
 \le(H+1)\one_{\{S\le T\}}.
 \label{eq:carleson-tail}
\end{equation}

Fix $N\ge1$ and put
\[
 T_N=T\wedge N,
 \qquad
 \nu_k^{(N)}=\one_{\{k\le T_N\}}\quad(1\le k\le N).
\]
If $S$ is a stopping time taking values in
$\{1,\ldots,N\}\cup\{\infty\}$, then
we define both sides below to be zero on $\{S=\infty\}$, and
\begin{equation}
 \sum_{k=S}^{N}\nu_k^{(N)}
 \le\one_{\{S\le T\}}(T-S+1).
 \label{eq:finite-carleson-tail}
\end{equation}
The conditional expectation of the right side given $\mathcal F_S$ is at
most $(H+1)\one_{\{S\le T\}}$ by
\eqref{eq:carleson-tail}.

Let $G\in L^2$ and $M_k=\E[G\mid\mathcal F_k]$.  For $\lambda>0$, define
\[
 S_{\lambda,N}=\inf\{1\le k\le N:|M_k|>\lambda\},
\]
with the infimum set to infinity when the set is empty.  Whenever
$|M_k|>\lambda$ and $k\le N$, one has $k\ge S_{\lambda,N}$, so
\begin{align}
 \E\sum_{k=1}^{N}\nu_k^{(N)}\one_{\{|M_k|>\lambda\}}
 &\le\E\left[
   \one_{\{S_{\lambda,N}<\infty\}}
   \sum_{k=S_{\lambda,N}}^{N}\nu_k^{(N)}\right] \notag\\
 &\le(H+1)\Pp\left(\max_{1\le k\le N}|M_k|>\lambda\right).
 \label{eq:carleson-level}
\end{align}
Integrating $2\lambda$ over $\lambda>0$ and using Doob's $L^2$ maximal
inequality yields
\begin{equation}
 \E\sum_{k=1}^{N}\nu_k^{(N)}M_k^2
 \le(H+1)\E\max_{1\le k\le N}|M_k|^2
 \le4(H+1)\E G^2.
 \label{eq:carleson-embedding}
\end{equation}

Let $d_k=V_k-V_{k-1}$.  If the energy on the right side of
\eqref{eq:stochastic-hardy} is infinite, the assertion is immediate;
otherwise the following finite sums are square integrable.  Since
$\nu_k^{(N)}d_k$ is $\mathcal F_k$-measurable,
\begin{align}
 |\E[G(V_{T_N}-V_0)]|
 &=\left|\E\sum_{k=1}^{N}\nu_k^{(N)}d_kM_k\right| \notag\\
 &\le
 \left(\E\sum_{k=1}^{N}\nu_k^{(N)}d_k^2\right)^{1/2}
 \left(\E\sum_{k=1}^{N}\nu_k^{(N)}M_k^2\right)^{1/2} \notag\\
 &\le2\sqrt{H+1}\,\|G\|_2
 \left(\E\sum_{k=1}^{T_N}d_k^2\right)^{1/2}.
 \label{eq:hardy-duality}
\end{align}
Duality in $L^2$ proves
\[
 \E(V_{T_N}-V_0)^2
 \le4(H+1)\E\sum_{k=1}^{T_N}d_k^2.
\]
Since $T<\infty$ almost surely, $V_{T_N}\to V_T$ almost surely.  Fatou's
lemma on the left and monotone convergence on the right now give
\eqref{eq:stochastic-hardy}.
\end{proof}

\subsection{Reversing a complete excursion}

\begin{theorem}[Reverse-excursion bridge]
\label{thm:reverse-bridge}
For every nontrivial partition $\Om=\cA\sqcup\cB$,
\begin{equation}
 \mathfrak T_{\cA,\cB,n}(F)
 \le8(\mathsf H_\cA+1)\cD_1(F).
 \label{eq:reverse-bridge}
\end{equation}
\end{theorem}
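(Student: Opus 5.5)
Reverse each complete excursion in time and then apply Lemma~\ref{lem:stochastic-hardy} to the reversed path.

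\textbf{Reversal.} Fix $a\in\cA$ and work under $\Pp_a$ on the event $\{X_1\in\cB,\ 2\le\tau_\cA^+\le n\}$. Write $\tau=\tau_\cA^+$ and $a'=X_\tau\in\cA$. The word identity \eqref{eq:word-reversal-prelim} gives
\[
 \pi_a\Pp_a(X_0,\ldots,X_\tau=\text{given excursion word})
 =\pi_{a'}\Pp_{a'}(\text{reversed word}).
\]
Reversing the word $(X_0,\ldots,X_{\tau+n-1})$ turns windows into windows, since counts are order-invariant. So the summed trace becomes a sum over $a'\in\cA$, weighted by $\pi_{a'}$, of an expectation under a chain started at $a'$ that first enters $\cB$ at time $1$ and reaches $\cA$ again at a time $\tau'\in[2,n]$.

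The awkward point is that the window tail (the $n-1$ letters after $a'$ in forward time) lies before $a'$ in the reversed path. I would handle this by rooting at the \emph{last} window. Under the stationary two-sided chain, consider the event
\[
 \{X_0\in\cA,\ X_1,\ldots,X_{\tau-1}\in\cB,\ X_\tau\in\cA\}.
\]
By stationarity and the Kac identity \eqref{eq:kac-general} with the set $\cA$, $\mathfrak T$ equals the stationary expectation of $(H_\tau-H_0)^2\one\{\cdot\}$ with $\tau$ the first return to $\cA$. Now reverse time about $\tau$: set $Y_k=X_{\tau+n-1-k}$. In this process $H_\tau$ and $H_0$ become the window values at reversed times $0$ and $\tau$. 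Because the reversed process is again a stationary $P$-chain, $H_0$ in reversed coordinates sits at time $0$ with an arbitrary past and future.

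\textbf{The stopping time.} It is cleaner to stay forward and read the excursion backwards as a stopping problem. I would take the filtration $\mathcal F_k=\sigma(Y_0,\ldots,Y_{n-1+k})$ and the adapted sequence $V_k=F(\text{window of }Y\text{ starting at }k)$. The stopping time $S$ is the first $k\ge1$ with $Y_{n-1+k}\in\cA$, intersected with the event that $Y_{n-1+k}$ stayed in $\cB$ before that.

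For $k\ge1$, on $\{S\ge k\}$ the current letter $Y_{n-1+k-1}$ lies in $\cB$. By the Markov property (given $\mathcal F_k$, the future depends only on the present letter), the residual $S-k$ is an entrance time to $\cA$ from a point of $\cB$, possibly $0$. So $\E[S-k\mid\mathcal F_k]\le\mathsf H_\cA$, which is the hypothesis \eqref{eq:residual-stopping} with $H=\mathsf H_\cA$.

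\textbf{Applying the Hardy inequality.} The truncation $\tau\le n$ and the requirement $X_1\in\cB$ are indicators measurable at the first step or at stopping. I would enforce them by stopping at $S\wedge n$ and restricting to the $\mathcal F_1$-event $\{Y_{n}\in\cB\}$. This only decreases the left side relative to the stopped quantity, provided the case $S>n$ is excluded by a harmless indicator on $\{S\le n\}$. Conditioning on $\mathcal F_0$ and applying Lemma~\ref{lem:stochastic-hardy} gives
\[
 \mathfrak T\le4(\mathsf H_\cA+1)\,\E\sum_{k=1}^{S}\Delta_{k-1}^2,
\]
where the expectation is with respect to the appropriately rooted measure.

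\textbf{Palm bound on the increments.} It remains to show that the rooted sum $\E\sum_{k\le S}\Delta^2$ is at most $2\E\Delta_0^2=4\cD_1(F)$, which gives the constant $8$. The phases $k-1\in[0,S-1]$ of an excursion rooted at a visit to $\cA$ are distinct phases of a return cycle to $\cA$. By the Kac identity \eqref{eq:kac-general} with $g=\Delta_0^2$, the $\pi_a$-weighted sum over these phases is at most $\E\Delta_0^2=2\cD_1(F)$. Combined with the factor $4(\mathsf H_\cA+1)$ from Lemma~\ref{lem:stochastic-hardy}, this gives $8(\mathsf H_\cA+1)\cD_1(F)$.

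\textbf{Main obstacle.} The hardest step is getting the reversal bookkeeping right, so that $H_0$ and $H_\tau$ become the start and stopped values of a process adapted to a filtration in which the residual hitting bound holds. In particular, the window overhang of $n-1$ letters must be placed on the already-revealed side. I would try rooting the reversed path so that the initial window is fully in $\mathcal F_0$ and each step reveals exactly one new letter at the leading edge. I would then check that the disjointness of the phases under the Kac decomposition is not broken by the reversal, possibly at the cost of a factor of $2$.
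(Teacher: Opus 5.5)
Your argument is correct and is essentially the paper's proof. Reversing about $\tau+n-1$ is justified by decomposing over $\{\tau=t\}$ and using stationarity and reversibility for each fixed $t$. It puts the window $X_\tau,\ldots,X_{\tau+n-1}$ into $\mathcal F_0$ and has step $k$ reveal $X_{\tau-k}$. This is the same mechanism as the paper's reversed excursion with post-exit future $Z$ exposed at time zero, phrased there under the normalized excursion measure $\Qq$. The stopped Carleson--Hardy lemma with residual bound $\mathsf H_\cA$ and the Kac identity then finish. No factor $2$ is lost: reversal only relabels the phases $0,\ldots,\tau-1$ of each excursion, so the rooted energy is at most $\E\Delta_0^2=2\cD_1(F)$ (not $4\cD_1(F)$ as you first wrote), and this gives exactly $8(\mathsf H_\cA+1)\cD_1(F)$.
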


\begin{proof}
Let
\[
 \kappa_\cA=\sum_{a\in\cA}\pi_aP(a,\cB).
\]
If $\kappa_\cA=0$, the trace vanishes.  Otherwise, work first under the
finite measure
\begin{equation}
 \sum_{a\in\cA}\pi_a\Pp_a(\,\cdot\,;X_1\in\cB).
 \label{eq:excursion-measure}
\end{equation}
For a complete excursion of length $\tau=\tau_\cA^+$, reverse it at its
exit and retain the actual post-exit future:
\begin{equation}
 Y_k=X_{\tau-k}\quad(0\le k\le\tau),
 \qquad
 Z_j=X_{\tau+j}\quad(j\ge0).
 \label{eq:reverse-paths}
\end{equation}
After division by $\kappa_\cA$, this defines a probability law $\Qq$.
Cylinder probabilities under $\Qq$ give the following conditional
structure.  If
\[
 y_0=b\in\cA,\quad y_1,\ldots,y_{r-1}\in\cB,\quad y_r\in\cA,
 \qquad z_0=b,
\]
then detailed balance gives
\[
 \Qq\left(
   Y_0=y_0,\ldots,Y_r=y_r,\,
   Z_1=z_1,\ldots,Z_\ell=z_\ell\right)
 =\frac{\pi_b}{\kappa_\cA}
 \prod_{h=0}^{r-1}P(y_h,y_{h+1})
 \prod_{h=0}^{\ell-1}P(z_h,z_{h+1}).
\]
Consequently, conditionally on $Y_0=b$, the $Z$-future is a
$\Pp_b$-chain and is independent of the stopped $Y$-path.  The latter has
the law of a $\Pp_b$-chain conditioned on $\{Y_1\in\cB\}$ and stopped at
its next hit of $\cA$.  Thus the conditioning affects the transition at
time zero, but not the transition kernel after the first step.

Let $T=\inf\{k\ge1:Y_k\in\cA\}$.  Reveal the entire $Z$-future at time
zero and use the stopped filtration
\begin{equation}
 \mathcal F_k=\sigma(Z_0,Z_1,\ldots;
                     Y_0,\ldots,Y_{k\wedge T}),
 \qquad k\ge0.
 \label{eq:reverse-filtration}
\end{equation}
The full return time $T=\tau$ is a stopping time.  For $0\le k\le T$, set
\begin{equation}
 C_k=\sum_{u=0}^{n-1}
 \begin{cases}
  e_{Y_{k-u}},&u\le k,\\
  e_{Z_{u-k}},&u>k,
 \end{cases}
 \qquad V_k=F(C_k).
 \label{eq:adapted-reverse-window}
\end{equation}
The variable $V_k$ is $\mathcal F_k$-measurable and its definition never
uses the unknown terminal time.  Because $F$ depends only on occupation
counts, direct inspection gives
\begin{equation}
 V_k=H_{\tau-k}\quad(0\le k\le T),
 \qquad V_0=H_\tau,\qquad V_T=H_0.
 \label{eq:reverse-window-identity}
\end{equation}
Set $V_k=V_T$ for $k>T$, as in the stopped-process convention of
Lemma~\ref{lem:random-cutoff-hardy} below.
When $k\ge n-1$, the whole count window lies in the already exposed
reversed prefix, so \eqref{eq:reverse-window-identity} remains valid even
for excursions longer than $n$.

For every $k\ge1$,
\[
 \{Y_1\in\cB\}\in\sigma(Y_0,\ldots,Y_k)\subseteq\mathcal F_k.
\]
Hence, on $\{k<T\}$, the conditioning on the first step has already been
revealed and causes no further change of kernel.  Moreover,
$Z\perp Y\mid Y_0$, so enlarging the natural $Y$-filtration by the entire
$Z$-future also leaves the conditional law of the future of $Y$
unchanged.  Therefore, on $\{T\ge k\}$,
\[
 \E_\Qq[T-k\mid\mathcal F_k]
 =\one_{\{k<T\}}\E_{Y_k}T_\cA
 \le\mathsf H_\cA.
\]
Here the right side before the inequality is zero when $k=T$.
Lemma~\ref{lem:stochastic-hardy} and
\eqref{eq:reverse-window-identity} therefore imply, after multiplying back
by $\kappa_\cA$,
\begin{align}
 &\sum_{a\in\cA}\pi_a\E_a\left[
  \one_{\{X_1\in\cB\}}(H_{\tau_\cA^+}-H_0)^2\right] \notag\\
 &\quad\le4(\mathsf H_\cA+1)
 \sum_{a\in\cA}\pi_a\E_a\left[
  \one_{\{X_1\in\cB\}}
  \sum_{t=0}^{\tau_\cA^+-1}\Delta_t^2\right].
 \label{eq:reverse-before-kac}
\end{align}

Drop $\one_{\{X_1\in\cB\}}$ from the nonnegative energy and apply the
unnormalized Kac identity \eqref{eq:kac-general} with $g=\Delta_0^2$:
\begin{align}
 \sum_{a\in\cA}\pi_a\E_a\left[
  \one_{\{X_1\in\cB\}}
  \sum_{t=0}^{\tau_\cA^+-1}\Delta_t^2\right]
 &\le\E\Delta_0^2=2\cD_1(F).
 \label{eq:reverse-kac-energy}
\end{align}
Finally, the restriction $2\le\tau_\cA^+\le n$ in
\eqref{eq:complete-trace} only removes nonnegative terms.  Combining
\eqref{eq:reverse-before-kac} and \eqref{eq:reverse-kac-energy} proves
\eqref{eq:reverse-bridge}.
\end{proof}

\begin{remark}[Relation to the weighted estimate]
\label{rem:prototype-role}
Section~\ref{sec:endpoint} uses the same reverse-time filtration with an
initially known random weight; see Lemmas~\ref{lem:random-cutoff-hardy}
and~\ref{lem:weighted-hardy}.  The present unweighted theorem is a
pedagogical prototype and is not a separate logical input to the proof of
the main theorem.
\end{remark}

\begin{remark}[Why the full return time is used]
The last interior time $T-1$ need not be a stopping time.  The adapted
construction \eqref{eq:adapted-reverse-window} instead stops at the full
return time $T$, where $V_T=H_0$.
\end{remark}

\section{Averaged endpoint variance}
\label{sec:endpoint}

The first-anchor reduction leaves the endpoint-rooted variances
$V_v^{\mathrm{end}}$.  A bound for one fixed anchor loses a factor
$1/\pi_v$.  We instead prove the deletion identity for each
$v\in\mathsf A$ and
estimate it only after averaging with the weights
$w_v=\pi_v/\alpha$ from \eqref{eq:heavy-anchor}.  This order is essential:
the future renewal multiplicity remains correlated with the nonlinear
deletion difference until it has been allocated to physical window edges.

Fix $v\in\mathsf A$.  Under $\Pp_v$, let
\begin{equation}
 0=S_0<S_1<S_2<\cdots
 \label{eq:return-times}
\end{equation}
be the successive visits to $v$.  The excursion word $E_i$ consists of the
letters on $[S_i,S_{i+1})$, and
\begin{equation}
 L_i=S_{i+1}-S_i\ge1,
 \qquad
 S_i=\sum_{r=0}^{i-1}L_r.
 \label{eq:excursion-lengths}
\end{equation}
The words $(E_i)_{i\ge0}$ are i.i.d.  Put
\begin{equation}
 Y=F\left(\sum_{t=0}^{n-1}e_{X_t}\right).
 \label{eq:endpoint-Y}
\end{equation}

For $0\le i<n$, let $Y^{(i)}$ be obtained by replacing $E_i$ with an
independent copy and reading the first $n$ letters of the new infinite
concatenation.  Let $Y^{(-i)}$ be obtained by deleting $E_i$, shifting all
later excursions left, and again reading the first $n$ letters.  Since
$L_i\ge1$, only the first $n$ excursion coordinates can affect $Y$.

\subsection{Deletion decomposition}

\begin{lemma}[Common-deletion Efron--Stein]
\label{lem:common-deletion}
One has
\begin{equation}
 V_v^{\mathrm{end}}=\Var_v(Y)
 \le2\sum_{i=0}^{n-1}\E_v(Y-Y^{(-i)})^2.
 \label{eq:common-deletion}
\end{equation}
\end{lemma}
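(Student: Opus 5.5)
The plan is to apply the Efron--Stein inequality to the i.i.d.\ excursion words and then pass from replacements to deletions. Under $\Pp_v$, $Y$ is a measurable function of $(E_0,\ldots,E_{n-1})$: since every $L_i\ge1$, one has $S_n\ge n$, so the first $n$ letters are determined by the first $n$ excursions. These words are i.i.d., so Efron--Stein gives
\[
 \Var_v(Y)\le\frac12\sum_{i=0}^{n-1}\E_v\bigl(Y-Y^{(i)}\bigr)^2,
\]
where $Y^{(i)}$ replaces $E_i$ by an independent copy $E_i'$. I would state this with $i$ running over all excursion indices; indices $i\ge n$ contribute zero.

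Next, compare both $Y$ and $Y^{(i)}$ with the common deleted value $Y^{(-i)}$. The key observation is that deleting $E_i$ from the original concatenation, or deleting $E_i'$ from the modified one, gives the same infinite word. Hence $Y^{(-i)}$ is also the deletion of $Y^{(i)}$. Then
\[
 (Y-Y^{(i)})^2\le2(Y-Y^{(-i)})^2+2(Y^{(i)}-Y^{(-i)})^2.
\]

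To finish, note that $(E_0,\ldots,E_i',\ldots)$ has the same joint law as $(E_0,\ldots,E_i,\ldots)$. The map sending a sequence of words to the pair of values it produces, before and after deleting the $i$th word, is the same measurable map in both cases. Therefore
\[
 \E_v(Y^{(i)}-Y^{(-i)})^2=\E_v(Y-Y^{(-i)})^2.
\]
Summing over $i$ gives $\Var_v(Y)\le\frac12\cdot4\sum_i\E_v(Y-Y^{(-i)})^2$, which is \eqref{eq:common-deletion}.

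The only points that need care are these. First, the deletion operation must be well defined: reading the first $n$ letters after the shift requires enough later excursions, and this holds because infinitely many i.i.d.\ excursions remain and each has length at least one. Second, Efron--Stein is applied to a function of finitely many independent coordinates, and $Y$ indeed depends only on $E_0,\ldots,E_{n-1}$. I expect no real obstacle.
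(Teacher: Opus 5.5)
Your proposal is correct and follows the paper's route: Efron--Stein over the i.i.d.\ excursion words, the deleted value $Y^{(-i)}$ as a common baseline for both $Y$ and $Y^{(i)}$, the identity $\E_v(Y-Y^{(-i)})^2=\E_v(Y^{(i)}-Y^{(-i)})^2$, and then $(x+y)^2\le2x^2+2y^2$, giving the factor $\frac12\cdot4=2$. You justify that identity by applying one measurable map to two equally distributed sequences, which is the same exchangeability argument the paper phrases as conditioning on all the other excursion words.
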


\begin{proof}
The product-space Efron--Stein inequality~\cite{EfronStein1981} gives
\[
 \Var_v(Y)\le\frac12\sum_{i=0}^{n-1}\E_v(Y-Y^{(i)})^2.
\]
Condition on every excursion word except the word inserted at coordinate
$i$.  The original and replacement words are i.i.d., while the deletion value
is a common baseline.  Hence
\begin{equation}
 \E_v(Y-Y^{(-i)})^2
 =\E_v(Y^{(i)}-Y^{(-i)})^2.
 \label{eq:deletion-symmetry}
\end{equation}
The inequality
\[
 |Y-Y^{(i)}|^2
 \le2|Y-Y^{(-i)}|^2+2|Y^{(i)}-Y^{(-i)}|^2
\]
and \eqref{eq:deletion-symmetry} prove
\eqref{eq:common-deletion}.
\end{proof}

Before the outer factor two in \eqref{eq:common-deletion}, define
\begin{align}
 \mathcal S_v
 &=\sum_{i=0}^{n-1}\E_v\left[
   \one_{\{S_{i+1}\le n,\ L_i=1\}}(Y-Y^{(-i)})^2\right],
 \label{eq:short-sector}\\
 \mathcal L_v
 &=\sum_{i=0}^{n-1}\E_v\left[
   \one_{\{S_{i+1}\le n,\ L_i\ge2\}}(Y-Y^{(-i)})^2\right],
 \label{eq:long-sector}\\
 \mathcal T_{n,v}
 &=\sum_{i=0}^{n-1}\E_v\left[
   \one_{\{S_i<n<S_{i+1}\}}(Y-Y^{(-i)})^2\right].
 \label{eq:terminal-sector}
\end{align}
The equality case $S_{i+1}=n$ is complete.  There is at most one terminal
partial excursion, and a coordinate outside the three displayed sectors
does not change the first $n$ letters.  Thus
\begin{equation}
 V_v^{\mathrm{end}}
 \le2\{\mathcal S_v+\mathcal L_v+\mathcal T_{n,v}\}.
 \label{eq:three-sectors}
\end{equation}

Let $L=L_0$ and define the future renewal multiplicity
\begin{equation}
 M_t^{(1)}=
 \sum_{i\ge0}\one_{\{L_1+\cdots+L_i\le t\}},
 \label{eq:renewal-multiplicity}
\end{equation}
where the empty sum for $i=0$ is zero and contributes one renewal epoch.

\begin{lemma}[Complete-excursion multiplicity]
\label{lem:exact-multiplicity}
The two complete sectors satisfy
\begin{align}
 \mathcal S_v
 &=\E_v\left[
   \one_{\{L=1\}}M_{n-1}^{(1)}(H_L-H_0)^2\right],
 \label{eq:short-multiplicity}\\
 \mathcal L_v
 &=\E_v\left[
   \one_{\{2\le L\le n\}}M_{n-L}^{(1)}(H_L-H_0)^2\right].
 \label{eq:exact-multiplicity}
\end{align}
Consequently, with $\mathcal C_v=\mathcal S_v+\mathcal L_v$,
\begin{equation}
 \mathcal C_v
 =\E_v\left[
   \one_{\{L\le n\}}M_{n-L}^{(1)}(H_L-H_0)^2\right].
 \label{eq:complete-correlated}
\end{equation}
\end{lemma}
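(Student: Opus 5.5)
The plan is to prove each complete-sector identity one summand at a time, by a measure-preserving relabeling of the i.i.d.\ excursion words $(E_j)_{j\ge0}$ under $\Pp_v$. Then sum over $i$ and recognise the resulting indicator sum as the renewal count $M^{(1)}_{n-L}$. No inequality is needed: every step is an exact equality in law.

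\textbf{Step 1: fix $i$ and rewrite $Y$ and $Y^{(-i)}$ after moving $E_i$ to the front.} Work on $\{S_{i+1}\le n\}$, so $E_i$ is a complete excursion lying inside the first $n$ letters. Let $W$ be the infinite concatenation $E_0E_1\cdots E_{i-1}E_{i+1}E_{i+2}\cdots$.
\begin{itemize}
\item $Y^{(-i)}$ is $F$ of the count of the first $n$ letters of $W$, by definition of deletion.
\item $Y$ is $F$ of the count of the first $n$ letters of $E_0\cdots E_{i-1}E_iE_{i+1}\cdots$. Its first $S_{i+1}$ letters form the same multiset as those of $E_iE_0\cdots E_{i-1}$, and the two words agree letter by letter after position $S_{i+1}$.
\end{itemize}
Since $F$ depends only on counts, $Y$ also equals $F$ of the first $n$ letters of $E_iW$.

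Now define the relabeled sequence $(E'_0,E'_1,\ldots)=(E_i,E_0,\ldots,E_{i-1},E_{i+1},\ldots)$. Since it is a fixed permutation of i.i.d.\ coordinates, it again has the law of the excursion sequence under $\Pp_v$. Read along the path $X'$ generated by $E'$, and write $L=L'_0=L_i$. Then
\[
Y=H_0(X'),\qquad Y^{(-i)}=H_L(X').
\]
The second identity holds because $W$ begins at time $L$ in $X'$. The event $\{S_{i+1}\le n\}$ becomes $\{L+L'_1+\cdots+L'_i\le n\}$. The side conditions $L_i=1$ and $L_i\ge2$ become $L=1$ and $L\ge2$.

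\textbf{Step 2: sum over $i$.} By the invariance in law from Step 1,
\[
\E_v\bigl[\one_{\{S_{i+1}\le n,\ L_i\in I\}}(Y-Y^{(-i)})^2\bigr]
=\E_v\bigl[\one_{\{L\in I,\ L_1+\cdots+L_i\le n-L\}}(H_L-H_0)^2\bigr]
\]
for $I=\{1\}$ or $I=[2,\infty)$. Summing over $0\le i\le n-1$ pulls $(H_L-H_0)^2$ out of the sum, leaving $\sum_i\one_{\{L_1+\cdots+L_i\le n-L\}}$. The restriction $i\le n-1$ is harmless. Indeed, $L_j\ge1$ forces $L_1+\cdots+L_i\ge i$, so any $i$ with a nonzero indicator satisfies $i\le n-L\le n-1$. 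The finite sum therefore equals the full renewal count $M^{(1)}_{n-L}$ of \eqref{eq:renewal-multiplicity}.

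This gives \eqref{eq:exact-multiplicity} on $\{2\le L\le n\}$, where the constraint $L\le n$ is implicit in $M^{(1)}_{n-L}$ being nonzero. It gives \eqref{eq:short-multiplicity} with $n-L=n-1$. Adding the two yields \eqref{eq:complete-correlated}.

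\textbf{Main obstacle.} The delicate point is Step 1's claim that $Y$ is unchanged by the cyclic move. This requires $E_i$ to be entirely inside the window, which is exactly why the equality case $S_{i+1}=n$ counts as complete and the terminal sector is kept separate.

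A second point needs care: the permutation depends on $i$ but not on the path. It is therefore a deterministic relabeling of i.i.d.\ coordinates, and preserves $\Pp_v$ even though the indicator $\{S_{i+1}\le n\}$ is random. I would state this explicitly, by applying the permutation to the full sequence before evaluating any indicators, so that no size bias is introduced.
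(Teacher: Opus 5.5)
Your proposal is correct and follows the paper's proof. The paper also applies the law-preserving cyclic permutation $\Phi_i$, sending $(E_0,\ldots,E_i)$ to $(E_i,E_0,\ldots,E_{i-1})$; observes that on $\{L_0+\cdots+L_i\le n\}$ the first-$n$ count is unchanged while the deletion becomes the shift by $L$; and then sums over $i$ via Tonelli, noting that the indicator vanishes for $i\ge n$, which yields $M^{(1)}_{n-L}$.
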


\begin{proof}
For a concatenated word $w$, let $\operatorname{cnt}_n(w)$ denote the
count vector of its first $n$ letters, and let
$\operatorname{sh}_r w$ denote the word obtained by deleting its first
$r$ letters.  Fix $i\ge0$ and let $\Phi_i$
cyclically permute the first $i+1$ excursion coordinates:
\[
 \Phi_i(E_0,E_1,\ldots,E_i,E_{i+1},\ldots)
 =(E_i,E_0,\ldots,E_{i-1},E_{i+1},\ldots).
\]
Because the excursion coordinates are i.i.d., $\Phi_i$ preserves their joint
law.  On
\[
 A_i=\{L_0+\cdots+L_i\le n\},
\]
all of the first $i+1$ blocks occur completely inside the original
length-$n$ window.  Hence, pathwise on $A_i$,
\begin{align*}
 \operatorname{cnt}_n(E_0\cdots E_iE_{i+1}\cdots)
 &=\operatorname{cnt}_n(E_iE_0\cdots E_{i-1}E_{i+1}\cdots),\\
 \operatorname{cnt}_n(E_0\cdots E_{i-1}E_{i+1}\cdots)
 &=\operatorname{cnt}_n\!\left(
   \operatorname{sh}_{L_i}(E_iE_0\cdots E_{i-1}E_{i+1}\cdots)\right).
\end{align*}
Thus, under $\Phi_i$, the pair $(Y,Y^{(-i)})$ has the same joint law as
$(H_0,H_L)$, where $L$ is the length of the marked block after the
permutation.  Consequently,
\begin{equation}
 \E_v\left[\one_{A_i}(Y-Y^{(-i)})^2\right]
 =\E_v\left[
   \one_{\{L+L_1+\cdots+L_i\le n\}}(H_L-H_0)^2\right].
 \label{eq:cyclic-deletion-pair}
\end{equation}
The same identity remains true after inserting either
$\one_{\{L_i=1\}}$ or $\one_{\{L_i\ge2\}}$ on the left, with respectively
$\one_{\{L=1\}}$ or $\one_{\{L\ge2\}}$ on the right.

Since every $L_r\ge1$, the indicator in
\eqref{eq:cyclic-deletion-pair} vanishes automatically for $i\ge n$.
Tonelli's theorem and the pathwise identity
\begin{align}
 \sum_{i\ge0}\one_{\{L+L_1+\cdots+L_i\le n\}}
 &=\one_{\{L\le n\}}
   \sum_{i\ge0}\one_{\{L_1+\cdots+L_i\le n-L\}} \notag\\
 &=\one_{\{L\le n\}}M_{n-L}^{(1)}
 \label{eq:cyclic-exact-multiplicity}
\end{align}
therefore give
\begin{align*}
 \mathcal S_v
 &=\E_v\left[
   \one_{\{L=1\}}M_{n-1}^{(1)}(H_L-H_0)^2\right],\\
 \mathcal L_v
 &=\E_v\left[
   \one_{\{2\le L\le n\}}M_{n-L}^{(1)}(H_L-H_0)^2\right].
\end{align*}
Adding the two identities proves \eqref{eq:complete-correlated}.
\end{proof}

The two factors in \eqref{eq:complete-correlated} are correlated because
the later excursions determine both of them.  The next weighted inequality
controls their product without decoupling.

\subsection{Stopped inequalities with random cutoffs}

\begin{lemma}[Initially known random cutoff]
\label{lem:random-cutoff-hardy}
Let $T\ge1$ be an integrable stopping time for
$(\mathcal F_k)_{k\ge0}$, and let $(V_k)_{k\ge0}$ be adapted.  Extend
$V_k=V_T$ after $T$, and suppose
\begin{equation}
 \E[T-k\mid\mathcal F_k]\le H
 \quad\text{on }\{T\ge k\},\qquad k\ge1.
 \label{eq:weighted-residual}
\end{equation}
If $N$ is a nonnegative integer-valued $\mathcal F_0$-measurable random
variable, then
\begin{equation}
 \E\left[\one_{\{T\le N\}}(V_T-V_0)^2\right]
 \le4(H+1)\E\sum_{k=1}^{T\wedge N}(V_k-V_{k-1})^2.
 \label{eq:random-cutoff-hardy}
\end{equation}
\end{lemma}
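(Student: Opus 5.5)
The plan is to reduce Lemma~\ref{lem:random-cutoff-hardy} to the argument of Lemma~\ref{lem:stochastic-hardy}, with the cutoff $N$ absorbed into the stopping structure. Since $N$ is $\mathcal F_0$-measurable, the time $T'=T\wedge N$ is again a stopping time. The obstacle is that $T'$ may be $0$, whereas Lemma~\ref{lem:stochastic-hardy} treats stopping times without such a restriction only implicitly. I would therefore rerun the proof of Lemma~\ref{lem:stochastic-hardy} directly, inserting the weight $\one_{\{T\le N\}}$ on the left and the truncated indicators $\nu_k=\one_{\{k\le T\wedge N\}}$ on the right.

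The key steps are as follows.

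\textbf{Carleson tail estimate.} First check the tail estimate for the truncated indicators. For a stopping time $S\ge1$,
\[
 \sum_{k\ge S}\one_{\{k\le T\wedge N\}}\le\one_{\{S\le T\}}(T-S+1).
\]
Hence \eqref{eq:carleson-tail} still gives the conditional bound $(H+1)\one_{\{S\le T\}}$. Consequently the Carleson embedding \eqref{eq:carleson-embedding} holds verbatim with $\nu_k^{(N')}=\one_{\{k\le T\wedge N\wedge N'\}}$, for any deterministic truncation level $N'$.

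\textbf{Duality step.} Next take $G\in L^2$ and $M_k=\E[G\mid\mathcal F_k]$. Because $\one_{\{T\le N\}}$ is not $\mathcal F_0$-measurable, I would not pair $G$ with $\one_{\{T\le N\}}(V_T-V_0)$ directly. Instead I would use the pathwise domination
\[
 \one_{\{T\le N\}}(V_T-V_0)^2\le(V_{T\wedge N}-V_0)^2,
\]
which holds because $V_{T\wedge N}=V_T$ on $\{T\le N\}$. It then suffices to bound $\E(V_{T\wedge N\wedge N'}-V_0)^2$. Write
\[
 V_{T\wedge N\wedge N'}-V_0=\sum_{k=1}^{N'}\nu_k d_k,
\]
where each $\nu_k d_k$ is $\mathcal F_k$-measurable; this uses that $N$ is known at time $0$. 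The identity $\E[G\,\nu_kd_k]=\E[M_k\nu_kd_k]$, Cauchy--Schwarz, and the embedding then yield the bound $4(H+1)\E\sum_{k=1}^{T\wedge N\wedge N'}d_k^2$ by $L^2$ duality.

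\textbf{Limit.} Finally let $N'\to\infty$. Fatou's lemma applies on the left, and monotone convergence on the right gives \eqref{eq:random-cutoff-hardy}.

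The main point to verify carefully is the measurability of $\nu_kd_k$ and that the tail bound needs only the residual hypothesis \eqref{eq:weighted-residual} on $\{S\le T\}$, not on $\{S\le T\wedge N\}$. Both follow because truncation by an $\mathcal F_0$-measurable $N$ only shrinks the indicator sums.
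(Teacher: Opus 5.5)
Your proof is correct, and it handles the random cutoff differently from the paper. The paper never stops the process at $T\wedge N$. It keeps $W_K=V_{T\wedge K}-V_0$ and pairs it with the dual variable $G\one_{\{T\le N\}}$. It then uses that $M_k=\E[G\one_{\{T\le N\}}\mid\mathcal F_k]$ vanishes on the $\mathcal F_k$-event $\{T\ge k>N\}$, so only increments with $k\le T\wedge N$ survive the Cauchy--Schwarz step. An $L^2$-Cauchy argument is then needed to send $K\to\infty$.

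You instead use the pointwise bound $\one_{\{T\le N\}}(V_T-V_0)^2\le(V_{T\wedge N}-V_0)^2$ and control the increment stopped at $T\wedge N$. Since $N$ is $\mathcal F_0$-measurable, $T\wedge N$ is an integrable stopping time. Because $T\wedge N-k\le T-k$ and $\{T\wedge N\ge k\}\subseteq\{T\ge k\}$, it satisfies the residual hypothesis with the same $H$. So your argument is exactly Lemma~\ref{lem:stochastic-hardy} applied to $T\wedge N$. That lemma's proof never uses $T\ge1$, since empty sums handle $T\wedge N=0$, so you could cite it directly instead of rerunning it.

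Your route is shorter and proves the stronger bound without the indicator on the left. Its final limit is also trivial, because $T\wedge N\wedge N'$ stabilizes once $N'\ge N$. The paper's device keeps the process independent of $N$ and places the cutoff only in the test function, at the cost of the support identity and the extra limiting argument.

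As in the paper, dispose of the infinite-energy case first. This makes each $\nu_kd_k$ square integrable, so the identity $\E[G\nu_kd_k]=\E[M_k\nu_kd_k]$ is legitimate.
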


\begin{proof}
Put
\[
 d_k=V_k-V_{k-1},\qquad
 \nu_k=\one_{\{T\ge k\}},\qquad
 E_N=\{T\le N\}.
\]
Because $T$ is a stopping time,
$\{T\ge k\}=\{T>k-1\}\in\mathcal F_{k-1}$; because $N$ is
$\mathcal F_0$-measurable, $\{k>N\}\in\mathcal F_0$.

If
\[
 A:=\E\sum_{k=1}^{T\wedge N}d_k^2
\]
is infinite, there is nothing to prove, so assume $A<\infty$.  For a
deterministic $K\ge1$, set
\[
W_K=\sum_{k=1}^{K}\nu_kd_k=V_{T\wedge K}-V_0.
\]
On $E_N$ one has $T\le N$, and therefore
\[
 \E\big[\one_{E_N}(\nu_kd_k)^2\big]
 \le \E\big[\one_{\{k\le T\wedge N\}}d_k^2\big]\le A.
\]
It follows from the finite-sum Cauchy--Schwarz inequality that
$\one_{E_N}W_K\in L^2$, so the following $L^2$ duality is legitimate.
Let $G\in L^2$ and
\[
 M_k=\E[G\one_{E_N}\mid\mathcal F_k].
\]
The sum defining $W_K$ is finite, and $\nu_kd_k$ is
$\mathcal F_k$-measurable.  Therefore
\begin{equation}
 \E[G\one_{E_N}W_K]
 =\E\sum_{k=1}^{K}\nu_kd_kM_k.
 \label{eq:random-cutoff-dual}
\end{equation}
On $\{T\ge k,\ k>N\}$ the event $E_N$ is impossible.  Since this set is
$\mathcal F_k$-measurable,
\begin{equation}
 \nu_k\one_{\{k>N\}}M_k
 =\E[G\one_{E_N}\nu_k\one_{\{k>N\}}\mid\mathcal F_k]
 =0.
 \label{eq:cutoff-support}
\end{equation}

For every stopping time $S\ge1$, partitioning over its values in
\eqref{eq:weighted-residual} gives
\begin{equation}
 \E\left[\sum_{k\ge S}\nu_k\,\middle|\,\mathcal F_S\right]
 \le(H+1)\one_{\{S\le T\}}.
 \label{eq:weighted-carleson-tail}
\end{equation}
For $\lambda>0$, let
\[
 S_{\lambda,K}=\inf\{1\le k\le K:|M_k|>\lambda\},
\]
with value infinity if the set is empty.  Applying
\eqref{eq:weighted-carleson-tail} to $S_{\lambda,K}$ and integrating the
resulting level-set inequality gives
\[
 \E\sum_{k=1}^{K}\nu_kM_k^2
 \le(H+1)\E\max_{1\le k\le K}|M_k|^2.
\]
Doob's $L^2$ maximal inequality now yields
\begin{equation}
 \E\sum_{k=1}^{K}\nu_kM_k^2
 \le4(H+1)\E[G^2\one_{E_N}]
 \le4(H+1)\E G^2.
 \label{eq:weighted-carleson-embedding}
\end{equation}
Using \eqref{eq:cutoff-support} in
\eqref{eq:random-cutoff-dual}, followed by Cauchy--Schwarz and
\eqref{eq:weighted-carleson-embedding}, gives
\[
 \left|\E[G\one_{E_N}W_K]\right|
 \le2\sqrt{H+1}\,\|G\|_2
 \left(\E\sum_{k=1}^{T\wedge N\wedge K}d_k^2\right)^{1/2}.
\]
Duality in $L^2$ proves the finite-horizon estimate
\begin{equation}
 \E[\one_{E_N}W_K^2]
 \le4(H+1)\E\sum_{k=1}^{T\wedge N\wedge K}d_k^2.
 \label{eq:random-cutoff-finite}
\end{equation}

For $J>K$, the identical duality argument applied to $W_J-W_K$ gives
\[
 \|\one_{E_N}(W_J-W_K)\|_2^2
 \le4(H+1)\E
   \sum_{k=K+1}^{J}\one_{\{k\le T\wedge N\}}d_k^2.
\]
The right side tends to zero: the partial energies increase to $A$ by
monotone convergence, equivalently their tails tend to zero by dominated
convergence with dominating variable
$\sum_{k=1}^{T\wedge N}d_k^2\in L^1$.  Hence
$\one_{E_N}W_K$ is Cauchy in $L^2$.  Since integrability of $T$ implies
$T<\infty$ almost surely, $W_K=V_T-V_0$ eventually almost surely.  Thus
\begin{equation}
 \one_{E_N}(V_T-V_0)
 =\lim_{K\to\infty}\one_{E_N}
   \sum_{k=1}^{K}\nu_kd_k
 \quad\text{in }L^2.
 \label{eq:random-cutoff-L2}
\end{equation}
Letting $K\to\infty$ in \eqref{eq:random-cutoff-finite}, using the
$L^2$ convergence on the left and monotone convergence of the
nonnegative energies on the right, proves
\eqref{eq:random-cutoff-hardy}.
\end{proof}

\begin{lemma}[Decreasing initially known weights]
\label{lem:weighted-hardy}
Under the assumptions of Lemma~\ref{lem:random-cutoff-hardy}, let
$(a_k)_{k\ge1}$ be a nonnegative, integer-valued, nonincreasing,
finite-support sequence whose entries are all $\mathcal F_0$-measurable.
Then
\begin{equation}
 \E[a_T(V_T-V_0)^2]
 \le4(H+1)\E\sum_{k=1}^{T}a_k(V_k-V_{k-1})^2.
 \label{eq:weighted-hardy}
\end{equation}
\end{lemma}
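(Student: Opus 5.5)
The plan is a layer-cake decomposition that reduces the weighted inequality to Lemma~\ref{lem:random-cutoff-hardy} applied once per level. Since the $a_k$ are nonnegative, integer-valued, nonincreasing and of finite support, we write
\[
 a_k=\sum_{j\ge1}\one_{\{a_k\ge j\}}=\sum_{j\ge1}\one_{\{k\le N_j\}},
 \qquad N_j:=\max\{k\ge1:a_k\ge j\},
\]
with the convention $N_j=0$ when no such $k$ exists. Monotonicity of $(a_k)$ gives exactly $\{k:a_k\ge j\}=\{1,\ldots,N_j\}$. Each $N_j$ is a deterministic function of the $\mathcal F_0$-measurable entries $a_1,a_2,\ldots$, so $N_j$ is a nonnegative integer-valued $\mathcal F_0$-measurable random variable. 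Finite support ensures $N_j<\infty$. The sum over $j$ is finite pointwise, since it is bounded by $\max_k a_k$.

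Apply this identity at the random index $k=T$:
\[
 a_T(V_T-V_0)^2=\sum_{j\ge1}\one_{\{T\le N_j\}}(V_T-V_0)^2 .
\]
For each fixed $j$, Lemma~\ref{lem:random-cutoff-hardy} with cutoff $N=N_j$ yields
\[
 \E\left[\one_{\{T\le N_j\}}(V_T-V_0)^2\right]
 \le4(H+1)\,\E\sum_{k=1}^{T\wedge N_j}(V_k-V_{k-1})^2 .
\]
We then sum over $j$ and use Tonelli, which is legitimate because all terms are nonnegative. On the right, the pathwise identity
\[
 \sum_{j\ge1}\one_{\{k\le T\wedge N_j\}}=\one_{\{k\le T\}}\,a_k
\]
turns the sum into $4(H+1)\,\E\sum_{k=1}^{T}a_k(V_k-V_{k-1})^2$. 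This is \eqref{eq:weighted-hardy}.

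I do not expect a serious obstacle. The analytic content, including the integrability issues and the $L^2$ limiting argument, is already handled inside Lemma~\ref{lem:random-cutoff-hardy}. Two points need care. First, we must check that the $N_j$ are genuinely $\mathcal F_0$-measurable and finite; this is where both the integer-valuedness and the monotonicity of the weights are used. Second, the interchange of $\sum_j$ with expectation on both sides must be justified. Tonelli handles this interchange even when the right side is infinite, in which case the claim is trivial.
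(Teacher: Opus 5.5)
Your proposal is correct and is essentially the paper's proof. Both use the layer-cake identity $a_T=\sum_{r\ge1}\one_{\{T\le N_r\}}$ with the $\mathcal F_0$-measurable cutoffs $N_r=\max\{k\ge1:a_k\ge r\}$, apply Lemma~\ref{lem:random-cutoff-hardy} once per layer, and use Tonelli to recombine the right-hand sides into $\E\sum_{k=1}^{T}a_k(V_k-V_{k-1})^2$.
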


\begin{proof}
For $r\ge1$, let
\[
 N_r=\max\{k\ge1:a_k\ge r\},
\]
with $N_r=0$ if the set is empty.  Every $N_r$ is
$\mathcal F_0$-measurable, and monotonicity gives
\begin{equation}
 a_T=\sum_{r\ge1}\one_{\{T\le N_r\}}.
 \label{eq:weight-layers}
\end{equation}
Apply Lemma~\ref{lem:random-cutoff-hardy} to every layer, sum, and use
Tonelli:
\[
 \begin{aligned}
 \E[a_T(V_T-V_0)^2]
 &\le4(H+1)\E
   \sum_{r\ge1}\sum_{k=1}^{T\wedge N_r}(V_k-V_{k-1})^2\\
 &=4(H+1)\E\sum_{k=1}^{T}a_k(V_k-V_{k-1})^2.
 \end{aligned}
\]
\end{proof}

\subsection{The complete-excursion contribution}

Fix $v\in\mathsf A$.  Let $\Qq_v$ be the push-forward of $\Pp_v$ obtained
by reversing the first-return word at its endpoint and retaining the actual
post-return future.  Under $\Qq_v$, write
\begin{equation}
 \widehat X_k=X_{L-k}\quad(0\le k\le L),
 \qquad
 Z_s=X_{L+s}\quad(s\ge0),
 \qquad T=L.
 \label{eq:weighted-reversal}
\end{equation}
Detailed balance shows that the reversed first-return word has exactly
the law of a $\Pp_v$-path stopped at $\tau_v^+$, including the self-loop
case $L=1$.  The strong Markov property at the return time shows, jointly,
that this stopped path is independent of $(Z_s)_{s\ge1}$ under $\Qq_v$;
the common value $\widehat X_0=Z_0=v$ is deterministic.
Reveal all of $Z$ at time zero and use
\begin{equation}
 \mathcal F_k=\sigma(Z_0,Z_1,\ldots;
                     \widehat X_0,\ldots,\widehat X_{k\wedge T}).
 \label{eq:weighted-filtration}
\end{equation}
Thus revealing all of $Z$ at time zero does not alter the forward
transition kernel of $\widehat X$.  For every $k\ge1$, on $\{T\ge k\}$,
\begin{equation}
 \E_{\Qq_v}[T-k\mid\mathcal F_k]
 =\one_{\{k<T\}}\E_{\widehat X_k}T_v
 \le\mathsf H_v.
 \label{eq:weighted-return-residual}
\end{equation}
The equality includes the case $k=T$, when both sides before the
inequality are zero.

For $0\le k\le T$, define the count
\begin{equation}
 C_k=\sum_{u=0}^{n-1}
 \begin{cases}
  e_{\widehat X_{k-u}},&u\le k,\\
  e_{Z_{u-k}},&u>k,
 \end{cases}
 \qquad V_k=F(C_k).
 \label{eq:weighted-reverse-window}
\end{equation}
This expression never uses the unknown value of $T$, so $V_k$ is adapted.
Because $F$ depends only on occupation counts,
\begin{equation}
 V_k=H_{L-k},\qquad
 V_0=H_L,\qquad V_T=H_0,
 \qquad
 (V_k-V_{k-1})^2=\Delta_{L-k}^2.
 \label{eq:weighted-window-identity}
\end{equation}
Set $V_k=V_T$ for $k>T$ before applying
Lemma~\ref{lem:weighted-hardy}.

Define
\begin{equation}
 a_k^{(v)}=
 \begin{cases}
 \displaystyle\sum_{s=0}^{n-k}\one_{\{Z_s=v\}},&1\le k\le n,\\
 0,&k>n.
 \end{cases}
 \label{eq:renewal-weight}
\end{equation}
This sequence is $\mathcal F_0$-measurable, integer-valued,
nonincreasing, and
\begin{equation}
 a_T^{(v)}
 =\one_{\{L\le n\}}M_{n-L}^{(1)}.
 \label{eq:terminal-renewal-weight}
\end{equation}
Lemma~\ref{lem:weighted-hardy} and
\eqref{eq:complete-correlated} therefore give
\begin{equation}
 \pi_v\mathcal C_v
 \le4(\mathsf H_v+1)B_v,
 \label{eq:weighted-complete-prep}
\end{equation}
where
\begin{equation}
 B_v=\pi_v\E_{\Qq_v}
 \sum_{k=1}^{T\wedge n}a_k^{(v)}(V_k-V_{k-1})^2.
 \label{eq:Bv-def}
\end{equation}

Undo the reversal in \eqref{eq:Bv-def}, and put $t=L-k$.  Then
\begin{equation}
 B_v=\pi_v\E_v\left[
  \sum_{t=(L-n)_+}^{L-1}
  \left(\sum_{s=0}^{n-L+t}\one_{\{X_{L+s}=v\}}\right)
  \Delta_t^2\right].
 \label{eq:Bv-cycle}
\end{equation}
Let
\begin{equation}
 R_v^+=\inf\{q\ge1:X_q=v\}
 \label{eq:strict-return}
\end{equation}
in a stationary two-sided chain.  At phase $t$ of the Palm return cycle,
$R_v^+\circ\theta_t=L-t$.  Hence Palm inversion
\eqref{eq:palm-singleton} transforms \eqref{eq:Bv-cycle} into the exact
identity
\begin{equation}
 B_v=\E\left[
  \Delta_0^2\one_{\{R_v^+\le n\}}
  \sum_{u=R_v^+}^{n}\one_{\{X_u=v\}}\right].
 \label{eq:Bv-palm}
\end{equation}
The strict return convention includes a self-loop as $R_v^+=1$ and gives,
pathwise,
\begin{equation}
 \one_{\{R_v^+\le n\}}
 \sum_{u=R_v^+}^{n}\one_{\{X_u=v\}}
 =\sum_{u=1}^{n}\one_{\{X_u=v\}}.
 \label{eq:strict-return-count}
\end{equation}
Consequently,
\begin{equation}
 \sum_{v\in\mathsf A}B_v
 \le\E\left[\Delta_0^2
       \sum_{u=1}^{n}\one_{\{X_u\in\mathsf A\}}\right]
 \le n\E\Delta_0^2
 =2n\cD_1(F).
 \label{eq:Bv-sum}
\end{equation}
Summing \eqref{eq:weighted-complete-prep}, dividing by $\alpha$, and using
\eqref{eq:Bv-sum} proves
\begin{equation}
 \sum_{v\in\mathsf A}w_v(\mathcal S_v+\mathcal L_v)
 \le\frac{8}{\alpha}n(\mathsf H_*+1)\cD_1(F)
 <16n(\mathsf H_*+1)\cD_1(F).
 \label{eq:averaged-complete}
\end{equation}

\subsection{The terminal partial-excursion contribution}

We first derive the physical marked-edge representation of the terminal
deletion.  Let
\[
 I=\min\{i\ge0:S_{i+1}>n\}.
\]
Since
\[
 \{I\le i\}=\{S_{i+1}>n\}\in\mathcal G_i,
 \qquad
 \mathcal G_i=\sigma(E_0,\ldots,E_i),
\]
the index $I$ is a stopping time for the i.i.d.-excursion filtration.  Put
\[
 s=S_I,\qquad R=n-s.
\]
Both $R$ and the retained value $Y$ are
$\mathcal G_I$-measurable, because the first $n$ letters are contained
in $E_0,\ldots,E_I$.  Write
\[
 \mathsf E_{\mathrm{term}}=\{S_I<n<S_{I+1}\}.
\]
There is exactly one terminal deletion coordinate on this event, and
therefore
\[
 \mathcal T_{n,v}
 =\E_v\left[
   \one_{\mathsf E_{\mathrm{term}}}(Y-Y^{(-I)})^2\right].
\]

Conditionally on $\mathcal G_I$, the excursions after $E_I$ form an
independent fresh Palm sequence.  Equivalently, the $R$ letters inserted
after deleting $E_I$ have the law of the first $R$ letters of a
$\Pp_v$-chain, independently of $(Y,R)$ and of the retained prefix.
Thus, conditionally on $\mathcal G_I$, if $X'$ is an independent
$\Pp_v$-chain, then
\[
 Y^{(-I)}\stackrel{d}{=}
 F\left(\sum_{a=0}^{s-1}e_{X_a}
       +\sum_{r=0}^{R-1}e_{X'_r}\right).
\]

To preserve this joint law, extend the original positive path to a
two-sided Palm chain conditional on $X_0=v$, using an independent past.
The Palm past and future are conditionally independent given $X_0=v$,
and reversibility implies that
\[
 (X_0,X_{-1},\ldots,X_{1-R})
\]
has, conditionally on the positive path and on $R$, the same count law as
the fresh length-$R$ word inserted by the deletion.  This couples the joint
pair $(Y,Y^{(-I)})$.

On $\mathsf E_{\mathrm{term}}$, one has $X_0=X_s=v$, so the retained
complete-loop background satisfies
\[
 \sum_{a=0}^{s-1}e_{X_a}=\sum_{a=1}^{s}e_{X_a}.
\]
Under the preceding joint coupling, the deleted count is therefore
\[
 \sum_{a=1-R}^{0}e_{X_a}+\sum_{a=1}^{s}e_{X_a}
 =\sum_{a=1-R}^{s}e_{X_a}=K_{1-R}.
\]
The original value is $H_0$.  Hence the coupling gives the exact identity
\begin{equation}
 \mathcal T_{n,v}
 =\E_v\left[
   \one_{\mathsf E_{\mathrm{term}}}(H_0-H_{1-R})^2\right].
 \label{eq:terminal-coupling}
\end{equation}
If $R=1$, then $H_{1-R}=H_0$, so the deletion difference is zero; if
$S_I=n$, then $R=0$ and there is no terminal sector.  Sliding from
$K_{1-R}$ to $K_0$ and applying Cauchy--Schwarz now gives
\begin{equation}
 \mathcal T_{n,v}
 \le\E_v\left[
  \one_{\{R\ge2\}}(R-1)\sum_{j=1}^{R-1}\Delta_{-j}^2\right].
 \label{eq:terminal-backward-shift}
\end{equation}
On $\mathsf E_{\mathrm{term}}$, the time $s=n-R$ is the last visit to $v$
in $[0,n]$.  Expanding \eqref{eq:terminal-backward-shift} by $R$ and passing
from $\Pp_v$ to stationarity therefore gives
\begin{align}
 \mathcal T_{n,v}
 &\le\frac1{\pi_v}\sum_{R=2}^{n}(R-1)
       \sum_{j=1}^{R-1}
       \E\bigl[\Delta_{-j}^2\one_{\{X_0=v,\,X_{n-R}=v\}}\notag\\
 &\hspace{47mm}\times
       \one_{\{X_r\ne v\ \text{for }n-R+1\le r\le n\}}\bigr].
 \label{eq:terminal-last-visit}
\end{align}
Writing $q=R-j$ and shifting the marked edge by $j$ yields
\begin{align}
 \mathcal T_{n,v}
 \le \frac1{\pi_v}
 \sum_{\substack{j,q\ge1\\j+q\le n}}(j+q-1)
 \E\big[&\Delta_0^2\one_{\{X_j=v\}}\notag\\
 &\times\one_{\{X_{n-q}=v,\,
       X_r\ne v\ \text{for }n-q+1\le r\le n+j\}}\big].
 \label{eq:terminal-expanded}
\end{align}

Multiply \eqref{eq:terminal-expanded} by $w_v$ and sum over
$v\in\mathsf A$.  The Palm factor cancels exactly:
$w_v/\pi_v=1/\alpha$.  Fix $j\ge1$ and condition on
$\mathcal F_n^X=\sigma(X_0,\ldots,X_n)$.  For
$1\le q\le n-j$, set
\[
 I_{j,v,q}
 =\one_{\{X_j=v,\ X_{n-q}=v,\ 
          X_r\ne v\ \text{for }n-q+1\le r\le n\}}.
\]
Because $q\ge1$, the avoidance interval contains time $n$, so
$I_{j,v,q}=1$ forces $X_n\ne v$.  Moreover, $v$ must equal the already
revealed state $X_j$, and $n-q$ must be the last visit to this state in
$[0,n]$ (equivalently, in $[j,n]$).  Hence, pathwise,
\begin{equation}
 \sum_{v\in\mathsf A}\sum_{q=1}^{n-j}I_{j,v,q}\le1.
 \label{eq:terminal-unique-pair}
\end{equation}
For the unique possible pair $(v,q)$, the remaining unrevealed condition
is avoidance of $v$ at times $n+1,\ldots,n+j$.  By the Markov property
and the already established fact $X_n\ne v$,
\[
 \Pp\left(
   X_{n+1},\ldots,X_{n+j}\ne v\,\middle|\,\mathcal F_n^X\right)
 =\Pp_{X_n}(T_v>j)
 \le\sup_{x\in\Om,\,u\in\mathsf A}\Pp_x(T_u>j).
\]
Here $T_v=\inf\{t\ge0:X_t=v\}$ is the entrance time.  It is not the
strict return time $\tau_v^+$, since the conditioned starting state
$X_n$ is different from $v$.  Finally $j+q-1<n$, so
\eqref{eq:terminal-unique-pair} yields
\begin{equation}
 \sum_{v\in\mathsf A}w_v\mathcal T_{n,v}
 \le\frac n\alpha\E\Delta_0^2
 \sum_{j\ge1}\sup_{x\in\Om,\,v\in\mathsf A}\Pp_x(T_v>j).
 \label{eq:terminal-after-last}
\end{equation}

Put $b=\lceil2\mathsf H_*\rceil$.  Markov's inequality and the strong
Markov property in successive blocks give
\begin{equation}
 \sup_{x,\,v\in\mathsf A}\Pp_x(T_v>kb)\le2^{-k},
 \qquad k\ge1.
 \label{eq:hitting-geometric-tail}
\end{equation}
Therefore
\begin{equation}
 \sum_{j\ge1}\sup_{x,\,v\in\mathsf A}\Pp_x(T_v>j)
 \le2b\le4\mathsf H_*+2.
 \label{eq:hitting-tail-blocks}
\end{equation}
Using $\alpha>1/2$ and $\E\Delta_0^2=2\cD_1(F)$ in
\eqref{eq:terminal-after-last}, we obtain
\begin{equation}
 \sum_{v\in\mathsf A}w_v\mathcal T_{n,v}
 \le(16n\mathsf H_*+8n)\cD_1(F).
 \label{eq:averaged-terminal}
\end{equation}

\subsection{The averaged endpoint bound}

\begin{proposition}[Averaged endpoint-variance bound]
\label{prop:endpoint-closure}
For every count function $F$,
\begin{equation}
 \sum_{v\in\mathsf A}w_vV_v^{\mathrm{end}}
 \le(64n\mathsf H_*+48n)\cD_1(F).
 \label{eq:endpoint-closure}
\end{equation}
\end{proposition}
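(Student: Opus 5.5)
The plan is to assemble the pieces already established in this section.

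First, we start from the three-sector decomposition \eqref{eq:three-sectors}. It holds for each fixed $v\in\mathsf A$ and gives
\[
 V_v^{\mathrm{end}}\le2\{\mathcal S_v+\mathcal L_v+\mathcal T_{n,v}\}.
\]
We multiply by $w_v$ and sum over $v\in\mathsf A$. This weighted sum over $\mathsf A$ is formed before any sector is estimated, so no factor $\pi_v^{-1}$ ever appears. The outer factor $2$ then multiplies the two averaged sector bounds:
\[
 \sum_{v\in\mathsf A}w_vV_v^{\mathrm{end}}
 \le2\sum_{v\in\mathsf A}w_v(\mathcal S_v+\mathcal L_v)
 +2\sum_{v\in\mathsf A}w_v\mathcal T_{n,v}.
\]

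Second, we bound the complete sectors. We take \eqref{eq:averaged-complete}, which bounds $\sum_v w_v(\mathcal S_v+\mathcal L_v)$ by $16n(\mathsf H_*+1)\cD_1(F)$. This rests on three earlier results:
\begin{itemize}
\item the multiplicity identity of Lemma~\ref{lem:exact-multiplicity};
\item the weighted stopped Hardy inequality, Lemma~\ref{lem:weighted-hardy}, applied with the decreasing renewal weight $a_k^{(v)}$;
\item the Palm reallocation \eqref{eq:Bv-sum}.
\end{itemize}
Before using \eqref{eq:weighted-complete-prep} we check that $\mathsf H_v\le\mathsf H_*$ for every $v\in\mathsf A$, which is Lemma~\ref{lem:hitting}.

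Third, we bound the terminal sector. Here we take \eqref{eq:averaged-terminal}, namely $\sum_v w_v\mathcal T_{n,v}\le(16n\mathsf H_*+8n)\cD_1(F)$. It comes from three ingredients:
\begin{itemize}
\item the reversed Palm coupling \eqref{eq:terminal-coupling};
\item the uniqueness of the last-visit pair \eqref{eq:terminal-unique-pair};
\item the geometric tail summation \eqref{eq:hitting-tail-blocks}.
\end{itemize}

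Adding the two bounds gives
\[
 2\{16n\mathsf H_*+16n+16n\mathsf H_*+8n\}\cD_1(F)=(64n\mathsf H_*+48n)\cD_1(F),
\]
which is exactly \eqref{eq:endpoint-closure}.

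The step I expect to need the most care is confirming that the displayed sector bounds were proved under the right hypotheses. The tail bound \eqref{eq:hitting-geometric-tail} is applied with a single block length $b$ that works uniformly over all anchors in $\mathsf A$. It follows from Markov's inequality $\Pp_x(T_v>b)\le\mathsf H_v/b\le\tfrac12$ together with the strong Markov property at the block times $kb$.

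I would also recheck the following inequality used in \eqref{eq:averaged-complete}:
\[
 \frac{8}{\alpha}\,n(\mathsf H_*+1)<16n(\mathsf H_*+1).
\]
It holds because $\alpha>1/2$ by \eqref{eq:heavy-anchor-mass}.

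The remaining arithmetic is routine.
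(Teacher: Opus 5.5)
Your proposal is correct and matches the paper's proof: you average \eqref{eq:three-sectors} with the weights $w_v$, insert \eqref{eq:averaged-complete} and \eqref{eq:averaged-terminal}, and compute $2\{16n(\mathsf H_*+1)+16n\mathsf H_*+8n\}=64n\mathsf H_*+48n$. Your side checks are the same inputs the paper uses: $\mathsf H_v<\mathsf H_*$ on $\mathsf A$, the uniform block length $b=\lceil 2\mathsf H_*\rceil$ in the tail bound, and $8/\alpha<16$ from $\alpha>1/2$.
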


\begin{proof}
Average \eqref{eq:three-sectors} over $v\in\mathsf A$, and then use
\eqref{eq:averaged-complete} and \eqref{eq:averaged-terminal}:
\[
 \begin{aligned}
 \sum_{v\in\mathsf A}w_vV_v^{\mathrm{end}}
 &\le2\left\{
   16n(\mathsf H_*+1)+16n\mathsf H_*+8n\right\}\cD_1(F)\\
 &=(64n\mathsf H_*+48n)\cD_1(F).
 \end{aligned}
\]
\end{proof}

\section{Proof of the uniform lower bound}
\label{sec:assembly}

We average the single-anchor inequalities and apply
Proposition~\ref{prop:endpoint-closure}.  The resulting comparison cost is
linear in the number of states.

\begin{theorem}[Uniform spectral-gap lower bound]
\label{thm:main-body}
For every integer $m\ge2$, every irreducible reversible kernel $P$ on $m$
states, and every $n\ge1$,
\begin{equation}
 \Gap(\widetilde P_n)
 \ge\frac1{1080m}\frac{\Gap(P)}n.
 \label{eq:main-gap}
\end{equation}
Thus $1080m$ is an admissible comparison cost.
For the optimal quantities defined in \eqref{eq:best-cm} and
\eqref{eq:best-Cm}, this implies
\begin{equation}
 c_m^\star\ge\frac1{1080m},
 \qquad
 C_m^{\mathrm{opt}}\le1080m.
 \label{eq:optimal-lower-consequence}
\end{equation}
\end{theorem}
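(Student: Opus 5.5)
The plan is to assemble the already proved pieces, following the proof sketch in the introduction. The trivial cases go first. If $n=1$, the map $x\mapsto e_x$ identifies $\widetilde P_1$ with $P$, so $\Gap(\widetilde P_1)=\gamma\ge\gamma/(1080m)$. If $\mathsf S_n$ is a singleton, the convention gives gap $1\ge\gamma/(1080mn)$, because $\gamma\le2$ and $m\ge2$. So we may assume $n\ge2$ and take a count function $F$ with $\Var_{\mu_n}(F)>0$. By the variational formula \eqref{eq:gap-variational}, it suffices to show $\Var_{\mu_n}(F)\le1080\,(mn/\gamma)\,\cD_1(F)$. We split according to whether $n\gamma\le m$ or $n\gamma>m$.

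In the regime $n\gamma\le m$, \eqref{eq:short-final} (from Lemmas~\ref{lem:block-separation} and~\ref{lem:window-telescoping}) gives $\cD_1(F)\ge (c_0/m)(\gamma/n)\Var(H_0)$. Here $c_0=(1-e^{-1})/2>1/1080$, so this case is immediate.

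In the regime $n\gamma>m$, fix $v\in\mathsf A$ and split $\Var_{\mu_n}(F)\le\E(H_0-c_v)^2$ according to the two events $\{K_{0,v}=0\}$ and $\cA_1$. Adding Corollary~\ref{cor:no-anchor} and \eqref{eq:first-anchor-centered} gives
\[
 \Var_{\mu_n}(F)\le\{64\mathsf H_v^2+(8n-4)\mathsf H_v\}\cD_1(F)+2p_vV_v^{\mathrm{end}}.
\]
The first coefficient is at least $60+4=64$ on $\mathsf H_v^2$, and $4(n-1)+4n=8n-4$ on $\mathsf H_v$. Now bound $p_v\le1$ and $\mathsf H_v<\mathsf H_*$ by Lemma~\ref{lem:hitting}, average this inequality over $v\in\mathsf A$ with weights $w_v$ (which sum to one), and apply Proposition~\ref{prop:endpoint-closure}. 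This gives
\[
 \Var_{\mu_n}(F)\le\{64\mathsf H_*^2+8n\mathsf H_*+2(64n\mathsf H_*+48n)\}\cD_1(F)
 =\{64\mathsf H_*^2+136n\mathsf H_*+96n\}\cD_1(F).
\]
It remains to compare each term with $mn/\gamma$, using $\mathsf H_*=3m/\gamma$ and $n\gamma>m$:
\begin{itemize}
 \item the first term is $64\mathsf H_*^2=576m^2/\gamma^2\le576\,mn/\gamma$, since $m/\gamma<n$;
 \item the second is $136n\mathsf H_*=408\,mn/\gamma$;
 \item the third is $96n\le96\,mn/\gamma$, since $\gamma\le2$ gives $m/\gamma\ge1$.
\end{itemize}
The constants sum to $576+408+96=1080$. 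The consequence \eqref{eq:optimal-lower-consequence} then follows by taking the infimum in \eqref{eq:best-cm}.

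The only delicate step is the averaging. The endpoint term must be averaged before it is bounded, because a per-anchor bound on $V_v^{\mathrm{end}}$ would lose a factor $\pi_v^{-1}$. Since the weights $w_v$ sum to one and $p_v\le1$, averaging is compatible with the per-anchor inequality, provided $\mathsf H_v$ is replaced by the uniform bound $\mathsf H_*$ before averaging. Beyond that, one only needs to check the elementary inequalities $\gamma\le2$ and $m\ge2$ in the last bound above.
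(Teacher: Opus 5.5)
Your proposal is correct and is essentially the paper's own proof. It uses the same disposal of the singleton and $n=1$ cases, the short regime $n\gamma\le m$ via \eqref{eq:short-final}, and, for $n\gamma>m$, the single-anchor inequality from Corollary~\ref{cor:no-anchor} and Lemma~\ref{lem:first-anchor} averaged with the weights $w_v$ (after bounding $p_v\le1$ and $\mathsf H_v<\mathsf H_*$) and closed by Proposition~\ref{prop:endpoint-closure}, with the identical $576+408+96=1080$ budget; the only slip is cosmetic, since the $\mathsf H_v^2$ coefficient is exactly $60+4=64$ rather than ``at least'' $64$.
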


\begin{proof}
If the support of $\mu_n$ is a singleton, then
$\Gap(\widetilde P_n)=1$ by convention, and \eqref{eq:main-gap} follows
immediately from $\gamma\le2$.  We may therefore assume that the count
support contains at least two points.

Assume first that $n\ge2$ and $n\gamma>m$.  Fix $v\in\mathsf A$.  Both the
anchor-free and anchor-containing sectors are centered at the same scalar
$c_v$.
Explicitly,
\[
 \E(F(K_0)-c_v)^2
 =\E\!\left[\one_{\{K_{0,v}=0\}}(F(K_0)-c_v)^2\right]
 +\E\!\left[\one_{\{K_{0,v}\ge1\}}(F(K_0)-c_v)^2\right].
\]
Therefore Corollary~\ref{cor:no-anchor}, Lemma~\ref{lem:first-anchor}, and
$\Var_{\mu_n}(F)\le\E(F(K_0)-c_v)^2$ give the single-anchor
inequality
\begin{align}
 \Var_{\mu_n}(F)
 &\le\{60\mathsf H_v^2+4(n-1)\mathsf H_v\}\cD_1(F)
 \notag\\
 &\quad+4(n\mathsf H_v+\mathsf H_v^2)\cD_1(F)
       +2p_vV_v^{\mathrm{end}} \notag\\
 &=\{64\mathsf H_v^2+(8n-4)\mathsf H_v\}\cD_1(F)
       +2p_vV_v^{\mathrm{end}}.
 \label{eq:global-exact}
\end{align}
Because the two sectors use the same center $c_v$, no between-sector
variance term or auxiliary projection-chain Poincar\'e constant appears.

Average \eqref{eq:global-exact} with
$w_v=\pi_v/\alpha$.  Since $p_v\le1$ and
$\mathsf H_v<\mathsf H_*$ for every $v\in\mathsf A$,
\begin{equation}
 \Var_{\mu_n}(F)
 \le\{64\mathsf H_*^2+8n\mathsf H_*\}\cD_1(F)
 +2\sum_{v\in\mathsf A}w_vV_v^{\mathrm{end}}.
 \label{eq:global-averaged}
\end{equation}
Proposition~\ref{prop:endpoint-closure} now gives
\begin{equation}
 \Var_{\mu_n}(F)
 \le\{64\mathsf H_*^2+136n\mathsf H_*+96n\}\cD_1(F).
 \label{eq:global-endpoint-term}
\end{equation}
The constants in \eqref{eq:global-endpoint-term} decompose as
\[
 136=8+2\cdot64,
 \qquad
 96=2\cdot48,
\]
where $8n\mathsf H_*$ comes from the averaged single-anchor estimate and
$64n\mathsf H_*+48n$ from
Proposition~\ref{prop:endpoint-closure}.  In that proposition,
$\alpha>1/2$ is used exactly when $8/\alpha$ is replaced by $16$ in
\eqref{eq:averaged-complete} and when the terminal factor $1/\alpha$ is
bounded by $2$.

Substituting $\mathsf H_*=3m/\gamma$ gives the numerical budget
\begin{center}
\small
\begin{tabular}{@{}lll@{}}
\toprule
Term & Input & Coefficient of $mn/\gamma$\\
\midrule
$64\mathsf H_*^2$
 & $\mathsf H_*=3m/\gamma$, $n\gamma>m$
 & $64\cdot9=576$\\
$136n\mathsf H_*$
 & $\mathsf H_*=3m/\gamma$
 & $136\cdot3=408$\\
$96n$
 & $\gamma\le2\le m$ (using $m\ge2$)
 & $96$\\
\bottomrule
\end{tabular}
\end{center}
Thus $576+408+96=1080$.  The long-window condition is used only to bound
$\mathsf H_*^2<9mn/\gamma$; the final row uses $\gamma\le2\le m$.
Consequently,
\begin{equation}
 \Var_{\mu_n}(F)
 \le1080m\frac n\gamma\cD_1(F)
 \qquad(n\gamma>m).
 \label{eq:long-poincare}
\end{equation}

If $n\ge2$ and $n\gamma\le m$, \eqref{eq:short-final} gives
\[
 \cD_1(F)\ge\frac{c_0}{m}\frac\gamma n\Var_{\mu_n}(F).
\]
Since $c_0=(1-e^{-1})/2>1/1080$, this is stronger than
\eqref{eq:main-gap}.  Finally, if $n=1$, the map $x\mapsto e_x$ is a
bijective relabeling and $\widetilde P_1=P$.  The variational formula
\eqref{eq:gap-variational} now proves \eqref{eq:main-gap} in every
non-singleton case.
Taking the infimum over $P$ and $n\ge2$ gives
\eqref{eq:optimal-lower-consequence}.
\end{proof}

\section{A window-length upper bound for fixed kernels}
\label{sec:upper}

The lower bound has the correct $n^{-1}$ order for every fixed kernel with
a nonconstant eigenvalue strictly between $-1$ and $1$.  The proof uses a
linear count statistic and is included to distinguish sharpness in $n$ from
the separate question of sharp dependence on $m$.  This calculation first
appeared in~\cite{XiangXinZhang2026} and is recalled here for completeness.

\begin{lemma}[Linear-statistic Rayleigh quotient]
\label{lem:linear-upper}
Let $f$ be a nonconstant eigenfunction of $P$ with eigenvalue
$\lambda\in(-1,1)$, normalized by
\[
 \E_\pi f=0,
 \qquad \E_\pi f^2=1.
\]
Then
\begin{equation}
 \Gap(\widetilde P_n)
 \le
 \frac{1-\lambda^n}
 {n+2\sum_{r=1}^{n-1}(n-r)\lambda^r}.
 \label{eq:linear-upper}
\end{equation}
\end{lemma}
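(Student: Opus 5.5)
We use the linear count statistic $F(k)=\sum_{x\in\Om}k_xf(x)$ as a test function in the variational formula \eqref{eq:gap-variational}. Then $H_t=F(K_t)=\sum_{j=0}^{n-1}f(X_{t+j})$, so the numerator and denominator of the Rayleigh quotient become explicit stationary covariance sums of the eigenfunction along the path. Because $f$ is an eigenfunction and the chain is stationary, $\E[f(X_0)f(X_r)]=\ip{f}{P^rf}_\pi=\lambda^{|r|}$.

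The first step is the Dirichlet form. The increment telescopes to
\[
 \Delta_0=f(X_n)-f(X_0).
\]
Hence
\[
 \cD_1(F)=\tfrac12\E(f(X_n)-f(X_0))^2=1-\lambda^n.
\]

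The second step is the variance. Since $\E H_0=0$,
\[
 \Var_{\mu_n}(F)=\E H_0^2=\sum_{i,j=0}^{n-1}\lambda^{|i-j|}=n+2\sum_{r=1}^{n-1}(n-r)\lambda^r.
\]
Taking the ratio gives exactly \eqref{eq:linear-upper}, provided this variance is strictly positive. In that case $F$ is nonconstant on $\mathsf S_n$, and the variational formula applies.

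The main obstacle is this positivity, together with the singleton-support convention. For $\lambda\in(-1,1)$ the variance is the Fejér-type sum
\[
 \sum_{|r|<n}(n-|r|)\lambda^{|r|}=\int |\hat\phi|^2\,d\nu,
\]
where $\nu$ is the spectral measure of the sequence $\lambda^{|r|}$. This measure is the Poisson kernel density $(1-\lambda^2)/|1-\lambda e^{i\theta}|^2$, which is strictly positive when $|\lambda|<1$. Therefore the quadratic form $\sum_{i,j}\lambda^{|i-j|}c_ic_j$ is positive definite, and with $c\equiv1$ the variance is $>0$. Consequently $F$ is nonconstant on the support, the support has at least two points, and the variational formula \eqref{eq:gap-variational} applies. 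The singleton-support convention therefore never arises in this lemma.
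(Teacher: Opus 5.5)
Your proposal is correct and follows the paper's proof. It uses the same linear count statistic $F(k)=\sum_x k_x f(x)$, the same telescoping identity $\Delta_0=f(X_n)-f(X_0)$ giving $\cD_1(F)=1-\lambda^n$, and the same covariance sum $\sum_{i,j}\lambda^{|i-j|}$ for the variance. The only difference is how you show the denominator is strictly positive: you use the strictly positive Poisson-kernel spectral density of $\lambda^{|r|}$, while the paper uses the closed form $n\frac{1+\lambda}{1-\lambda}-\frac{2\lambda(1-\lambda^n)}{(1-\lambda)^2}$ with a case split on the sign of $\lambda$; both arguments are valid.
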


\begin{proof}
Define the linear count function
\begin{equation}
 F(k)=\sum_{x\in\Om}k_xf(x).
 \label{eq:linear-count-function}
\end{equation}
Then $F(K_0)=\sum_{i=0}^{n-1}f(X_i)$.  Since $Pf=\lambda f$,
\[
 \E[f(X_i)f(X_j)]=\lambda^{|i-j|},
\]
and hence
\begin{equation}
 \Var_{\mu_n}(F)
 =n+2\sum_{r=1}^{n-1}(n-r)\lambda^r.
 \label{eq:linear-variance}
\end{equation}
Writing the right side as $D_n(\lambda)$, direct summation gives
\begin{equation}
 D_n(\lambda)
 =n\frac{1+\lambda}{1-\lambda}
 -\frac{2\lambda(1-\lambda^n)}{(1-\lambda)^2}.
 \label{eq:Dn-closed}
\end{equation}
The denominator is strictly positive.  If $0\le\lambda<1$, the covariance
sum in \eqref{eq:linear-variance} is at least its leading term $n>0$.  If
$-1<\lambda<0$, \eqref{eq:Dn-closed} is the sum of the strictly positive
term $n(1+\lambda)/(1-\lambda)$ and the nonnegative term
$-2\lambda(1-\lambda^n)/(1-\lambda)^2$.

One window shift changes the statistic by
\[
 F(K_1)-F(K_0)=f(X_n)-f(X_0).
\]
Therefore
\begin{equation}
 \cD_1(F)
 =\frac12\E(f(X_n)-f(X_0))^2
 =1-\lambda^n.
 \label{eq:linear-energy}
\end{equation}
The variational formula \eqref{eq:gap-variational} gives
\eqref{eq:linear-upper}.
\end{proof}

\begin{proposition}[Fixed-kernel $O(n^{-1})$ upper bound]
\label{prop:fixed-upper}
Under the hypothesis of Lemma~\ref{lem:linear-upper},
\begin{equation}
 \Gap(\widetilde P_n)\le\frac{C_\lambda}{n}
 \qquad(n\ge2),
 \label{eq:fixed-upper}
\end{equation}
where one may take
\begin{equation}
 C_\lambda=
 \begin{cases}
 1,&0\le\lambda<1,\\[3pt]
 \displaystyle\frac{2(1-\lambda)}{1+\lambda},&-1<\lambda<0.
 \end{cases}
 \label{eq:C-lambda}
\end{equation}
\end{proposition}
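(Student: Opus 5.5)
The plan is to start from Lemma~\ref{lem:linear-upper} and bound the denominator $D_n(\lambda)$ from below. The numerator $1-\lambda^n$ will be controlled by a constant. We treat the two sign cases separately.

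\emph{Case $0\le\lambda<1$.} The numerator satisfies $1-\lambda^n\le1$. In \eqref{eq:linear-variance} every covariance term $(n-r)\lambda^r$ is nonnegative, so $D_n(\lambda)\ge n$. Hence \eqref{eq:linear-upper} gives $\Gap(\widetilde P_n)\le 1/n$, which is $C_\lambda=1$. No further work is needed here.

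\emph{Case $-1<\lambda<0$.} Now $\lambda^n$ may have either sign, but $|\lambda|<1$, so $1-\lambda^n\le 1+|\lambda|^n<2$. For the denominator I would use the closed form \eqref{eq:Dn-closed}, which the paper has already shown to be a sum of two pieces: $n(1+\lambda)/(1-\lambda)$ and $-2\lambda(1-\lambda^n)/(1-\lambda)^2$. The second piece is nonnegative because $-\lambda>0$ and $1-\lambda^n>0$. Discarding it gives
\[
 D_n(\lambda)\ge n\frac{1+\lambda}{1-\lambda}.
\]
Therefore
\[
 \Gap(\widetilde P_n)\le \frac{2(1-\lambda)}{(1+\lambda)n},
\]
which is the stated $C_\lambda$.

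The only delicate point is the sign bookkeeping in the negative case. One must use $1-\lambda^n\le2$ rather than $\le1$, and rely on the positivity of the second term of \eqref{eq:Dn-closed} rather than on the termwise covariance sum, which alternates in sign. The closed form \eqref{eq:Dn-closed} itself is a routine geometric-series computation that the paper has already recorded. The restriction $n\ge2$ plays no role beyond matching the range of $c_m^\star$. Since $\lambda\ne\pm1$, both constants are finite.
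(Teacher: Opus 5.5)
Your proposal is correct and follows the paper's proof essentially line for line. For $0\le\lambda<1$ you use termwise nonnegativity of the covariance sum to get $D_n(\lambda)\ge n$ together with $1-\lambda^n\le1$. For $-1<\lambda<0$ you drop the nonnegative second term of the closed form \eqref{eq:Dn-closed} and use $1-\lambda^n\le2$, exactly as the paper does.
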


\begin{proof}
Let $D_n(\lambda)$ denote the denominator in
\eqref{eq:linear-upper}.  Use the closed form
\eqref{eq:Dn-closed}.
If $0\le\lambda<1$, every term in the defining covariance sum is
nonnegative, so $D_n(\lambda)\ge n$, while $1-\lambda^n\le1$.  If
$-1<\lambda<0$, the second term on the right of
\eqref{eq:Dn-closed} is nonnegative; hence
\[
 D_n(\lambda)\ge n\frac{1+\lambda}{1-\lambda},
 \qquad 1-\lambda^n\le2.
\]
Insert these bounds in Lemma~\ref{lem:linear-upper}.
\end{proof}

\begin{corollary}[Sharp window-length order for fixed aperiodic chains]
\label{cor:fixed-matching}
For every fixed finite irreducible reversible aperiodic kernel $P$ on at
least two states, there exist constants $0<c_-(P)\le c_+(P)<\infty$ such
that
\begin{equation}
 \frac{c_-(P)}n
 \le\Gap(\widetilde P_n)
 \le\frac{c_+(P)}n
 \qquad(n\ge2).
 \label{eq:fixed-matching}
\end{equation}
\end{corollary}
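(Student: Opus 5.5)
The lower bound comes directly from Theorem~\ref{thm:main-body}. For every $n\ge1$ it gives $\Gap(\widetilde P_n)\ge\Gap(P)/(1080m\,n)$, where $m=|\Om|\ge2$ is fixed. For fixed $P$, irreducibility gives $\Gap(P)>0$, so we may take $c_-(P)=\Gap(P)/(1080m)$. The singleton-support convention causes no trouble: it only assigns gap $1$, which still satisfies the lower bound because $\Gap(P)\le2$.

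For the upper bound we use Proposition~\ref{prop:fixed-upper}. It requires a nonconstant eigenfunction $f$ whose eigenvalue $\lambda$ lies in $(-1,1)$. Since $m\ge2$, the spectrum of $P$ in $L^2(\pi)$ has at least one nonconstant eigenvalue. Because $P$ is irreducible, the eigenvalue $1$ is simple, so every nonconstant eigenvalue is strictly below $1$. Because $P$ is aperiodic and irreducible, $-1$ is not an eigenvalue. (A reversible irreducible chain has $-1$ in its spectrum exactly when it is bipartite, i.e.\ periodic with period $2$.) Hence $\lambda_2\in(-1,1)$. Take $f$ to be an eigenfunction for $\lambda=\lambda_2$, normalized to have mean zero and unit variance. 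This is possible because $f$ is orthogonal to the constants.

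Proposition~\ref{prop:fixed-upper} then gives $\Gap(\widetilde P_n)\le C_\lambda/n$ for all $n\ge2$, with $C_\lambda<\infty$ given by \eqref{eq:C-lambda}. Set $c_+(P)=\max\{C_{\lambda},c_-(P)\}$. This keeps the ordering $c_-\le c_+$, and in fact $c_-\le C_\lambda$ holds automatically, since both bounds apply to the same quantity.

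One point needs checking. In Lemma~\ref{lem:linear-upper}, the variational formula requires $\Var_{\mu_n}(F)>0$, and the lemma's proof shows the denominator $D_n(\lambda)$ is strictly positive. So the count support is non-singleton for every $n$, the convention never applies, and the upper bound holds as stated. The only substantive step is this spectral fact that aperiodicity excludes the eigenvalue $-1$; it is exactly what blocks the alternating-chain degeneracy described in Remark~\ref{rem:degenerate-periodic}.
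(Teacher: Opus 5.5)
Your proof is correct and follows the paper's argument: the lower bound is Theorem~\ref{thm:main-body} with $c_-(P)=\Gap(P)/(1080m)$, and the upper bound is Proposition~\ref{prop:fixed-upper} applied to a nonunit eigenvalue in $(-1,1)$, which exists by irreducibility and aperiodicity. The differences are cosmetic: the paper takes $c_+(P)=\min_{\lambda\in\sigma(P)\setminus\{1\}}C_\lambda$ instead of fixing $\lambda_2$, and you explicitly check that $D_n(\lambda)>0$ rules out the singleton-support convention, which the paper leaves implicit.
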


\begin{proof}
The lower bound follows from Theorem~\ref{thm:main-body}, with
$c_-(P)=\Gap(P)/(1080m)$.  Let
\[
 \Lambda(P)=\sigma(P)\setminus\{1\}.
\]
For a finite irreducible reversible aperiodic kernel on at least two states,
$\Lambda(P)$ is a nonempty finite subset of $(-1,1)$.  Applying
Proposition~\ref{prop:fixed-upper} to any $\lambda\in\Lambda(P)$ gives an
admissible upper constant $C_\lambda$; one may therefore take
\[
 c_+(P)=\min_{\lambda\in\Lambda(P)}C_\lambda.
\]
This minimum is finite and lies in $[1,\infty)$.  Thus the upper constant
depends on the chosen nonconstant spectral mode, with the displayed
minimum giving the best constant supplied by this argument.
\end{proof}

\begin{remark}[Periodic endpoint cases]
The aperiodicity assumption is used only as a convenient sufficient
condition for the matching upper statement; it can be weakened to the
existence of a nonconstant eigenvalue in $(-1,1)$.  It is not needed for
the main lower bound.  For example, the deterministic two-state
alternating chain has nonconstant eigenvalue $-1$; depending on the parity
of $n$, its count support may collapse or alternate without the generic
$n^{-1}$ relaxation scale.
\end{remark}

\section{A nested rare-state obstruction}
\label{sec:obstruction}

The lower bound proved above has a comparison cost linear in the number of
states.  We now show that this order cannot be improved.  The witness is a
reversible chain with one dominant root and a nested sequence of rare states.
Successive rare levels produce the scalar recursion
\(q\mapsto q(1-q)\).

\begin{theorem}[Nested rare-state obstruction]
\label{thm:exact-cardinality-upper}
Let
\begin{equation}
 q_0=\frac14,
 \qquad
 q_{r+1}=q_r(1-q_r),\quad r\ge0.
 \label{eq:nested-q-recursion}
\end{equation}
Then, for every integer \(m\ge2\),
\begin{equation}
 \boxed{c_m^\star\le q_{m-2}.}
 \label{eq:exact-cardinality-upper}
\end{equation}
More precisely, for every \(\delta>0\) there are a finite irreducible
reversible kernel \(P\) on exactly \(m\) states and a finite window length
\(n\ge2\) such that
\begin{equation}
 \frac{n\Gap(\widetilde P_n)}{\Gap(P)}
 \le q_{m-2}+\delta.
 \label{eq:nested-finite-witness}
\end{equation}
The kernels may also be chosen entrywise positive.  Furthermore,
\begin{equation}
 q_r=\frac1{r+\log r+O(1)}\quad(r\to\infty),
 \qquad
 q_{m-2}\le\frac1{m+2}.
 \label{eq:nested-q-asymptotic}
\end{equation}
Consequently \(C_m^{\mathrm{opt}}\ge q_{m-2}^{-1}\), so a linear
comparison cost is necessary.
\end{theorem}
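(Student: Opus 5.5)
The plan is to prove \eqref{eq:exact-cardinality-upper} by exhibiting explicit test functions, so that the variational formula \eqref{eq:gap-variational} bounds $\Gap(\widetilde P_n)$ above. This requires no information about the full spectrum of $\widetilde P_n$. The witnesses are built on $\Om=\{0,1,\ldots,m-1\}$. State $0$ is a dominant root. Each state $j\ge1$ is a rare level, entered only from level $j-1$ with a small rate $\varepsilon_j$ and left back to level $j-1$ with rate $\beta_j$. Detailed balance then fixes $\pi$, which is concentrated on $0$. The level parameters are chosen so that $\Gap(P)$ is governed by the first level, $\gamma\asymp\beta_1$, and each deeper level lives on a strictly separated time scale. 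The window length is tuned so that $n\beta_1$ and the expected number of visits to level $1$ per window both tend to fixed constants. This is the regime in which neither the short-window estimate \eqref{eq:short-final} nor the long-window mechanism is sharp.

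\emph{Base case $m=2$.} In the singular limit $\varepsilon_1\to0$ with $n\beta_1$ held fixed, a window containing level $1$ sees essentially one sojourn there, and that sojourn has an approximately exponential length. The count $K_{0,1}$ therefore converges, after scaling by $n$, to a law on $[0,1]$: an atom at $0$ plus the truncated-sojourn part. The projected dynamics converges to a one-dimensional birth--death limit. I would compute $\cD_1$ and $\Var_{\mu_n}$ for test functions $F(k)=g(k_1/n)$ in this limit, and use them to show that $n\cD_1(F)/(\gamma\Var(F))$ can be made to approach $q_0=1/4$. The candidate test function is affine in the occupied fraction plus an indicator of the nonempty window; the tuning parameter $y_1$ multiplies this combination, and the value $q_0$ is reached after optimizing over $n\beta_1$.

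\emph{Induction.} Add level $r+1$ on a time scale much slower than level $r$. Use a product test $F=F_{\le r}\cdot G_{r+1}$, where $G_{r+1}$ depends on the window's occupation of the deeper levels with weight $y_1$. In the successive singular limits the cross terms decouple. Windows that miss the new level contribute $y_1^2$ relative to the normalization, and windows that hit it reproduce the previous quotient with weight $1-2y_1$. This gives the reduction $\Phi_r=y_1^2+(1-2y_1)\Phi_{r-1}$. Minimizing over $y_1$ yields $y_1=\Phi_{r-1}$ and the minimum value $\Phi_{r-1}(1-\Phi_{r-1})$, hence $q_r=q_{r-1}(1-q_{r-1})$. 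The limits must be taken deepest level first. The finite-witness statement \eqref{eq:nested-finite-witness} then follows from a reverse-order diagonal choice: fix $\delta$, choose the innermost parameters so that their error is at most $\delta/(2m)$, then the next level, and so on. At each step, continuity of the finite Rayleigh quotient in the kernel entries, for fixed $n$, is used. That same continuity gives entrywise positivity: perturb every entry by $\eta\to0$, which changes $\cD_1$, $\Var$, and $\Gap(P)$ continuously.

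\emph{Asymptotics.} The bound $q_{m-2}\le 1/(m+2)$ is already proved in the introduction. For the expansion, write $1/q_{r+1}=1/q_r+1+q_r/(1-q_r)$. Since $q_r\le 1/(r+4)$, the last term is $1/r+O(r^{-2})$ once the first-order asymptotics $q_r\sim 1/r$ are fed back in. Summing gives $1/q_r=r+\log r+O(1)$.

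The main obstacle is the rigorous justification of the singular limits. Specifically, one must show that the nonlocal count kernel restricted to product tests really converges to the claimed limiting quotients, uniformly enough that the interchange of successive limits is legitimate. It also has to be shown that $\Gap(P)$ is asymptotically $\beta_1$ and is not pulled down by the deeper levels. For the decoupling step I would first try computing everything via Palm/excursion decompositions at the root, as in Section~\ref{sec:endpoint}, where the sojourns are i.i.d. This should make the edge measure explicit up to $o(1)$ errors.
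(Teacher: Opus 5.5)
\paragraph{Verdict: genuine gap.} Your outline has the paper's architecture: a dominant root with nested rare levels, iterated singular limits, product tests, the recursion for $\Phi_r$, a diagonal choice, a positivity perturbation, and the $q_r^{-1}$ asymptotics. However, the steps that actually produce $q_0$ and the recursion are wrong as proposed.

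\emph{Base case.} In your scaling ($\varepsilon_1\to0$, then $n\to\infty$ with $\lambda=n\beta_1$ fixed), the indicator part of your test jumps at $k_1=0$. That jump costs $\cD_1\asymp\varepsilon_1$ against $\Var\asymp\varepsilon_1 n$, so $n\cD_1(F)/(\gamma\Var(F))$ is of order $n/\lambda\to\infty$. Without the jump, tests $F=g(k_1/n)$ with $g(0)=0$ have normalized quotient tending to
\[
 \frac{\int_0^1 g'(u)^2e^{-\lambda u}\,du}
 {\lambda\int_0^1 g(u)^2\bigl(2+\lambda(1-u)\bigr)e^{-\lambda u}\,du+g(1)^2e^{-\lambda}}.
\]
Substituting $g(u)=e^{\lambda u/2}\phi(\lambda u)$ and using $\phi(v)^2\le v\int_0^v\phi'^2$ shows this is at least $1/4$ for every fixed $\lambda$. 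Affine $g$ gives values tending to $1/2$ from above. The value $1/4$ is approached as $\lambda\to\infty$ with $\phi$ a wide bump, i.e.\ with exponential rather than affine tests.

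So $q_0$ is a Hardy constant for the geometric sojourn law, and it lives in the long-window regime, not an intermediate one. The paper lets $n\to\infty$ at fixed $\rho$, so $n\Gap(P)\to\infty$. It tests $f(k)=x^k$ for $k\ge1$ with $f(0)=0$, which gives $(1+s-2sx)/a$. It then sends $x\uparrow\rho^{-1}$, where the weighted mass $s^kx^{2k}$ is flat, and finally $\rho\uparrow1$.

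\emph{Inductive step.} The recursion is asserted rather than derived, and the mechanism you describe is not what happens.
\begin{itemize}
\item If ``slower time scale'' means longer sojourns at deeper levels, then the killed block becomes triangular as the $\varepsilon_j\to0$. Hence $\Gap(P)\to\min_j\beta_j$ is set by the deepest level, contradicting $\gamma\asymp\beta_1$. For the sharp constant you would in any case need $\Gap(P)/\beta_1\to1$, not just comparability.
\item In the paper every level has the same holding probability $s$, so $\Gap(P)\to1-s=a$. The hierarchy is carried only by the entrance probabilities $\varepsilon_i$, and the shallow tilts $x_j=1+ay_j$ live on that same scale $a$.
\item The test vanishes on every window that misses level $d$. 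The new variable $y_1$ in $\Phi_r=y_1^2+(1-2y_1)\Phi_{r-1}$ is the tilt of the \emph{shallowest} level. Here $y_1^2$ is its normalized edge energy, and $1-2y_1=w_1$ comes from $\tau(x_1^2)\approx(aw_1)^{-1}$ in the mass.
\end{itemize}
The missing ingredient is Lemma~\ref{lem:nested-leading-form}. After $\eta\downarrow0$ and $n\to\infty$, the quotient becomes $\cE_d(f)/\|f\|_M^2$, with explicit generating functions built from mountain words. Complete words are weighted by their $n-|w|-1$ placements, and prefix words give the edges. Only then do product tests yield \eqref{eq:nested-product-quotient}, and hence $\Phi_{d-1}$.

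\emph{Smaller points.}
\begin{itemize}
\item Your diagonal runs in the wrong direction. You must fix the outermost parameters first ($\rho$, then the test, then $\varepsilon$, then $n$) and choose $\eta$ last, since inner convergence is only known at fixed outer parameters.
\item The positivity perturbation must preserve reversibility, e.g.\ $(1-\zeta)P+\zeta\Pi$ with $\Pi(x,y)=\pi(y)$.
\item The error in your asymptotics is $O(\log r/r^2)$, not $O(r^{-2})$. It is still summable, so the conclusion stands.
\end{itemize}
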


\subsection{The nested reversible family}

Put \(d=m-1\).  Fix \(0<\rho<1\), and write
\begin{equation}
 s=\rho^2,
 \qquad a=1-s,
 \qquad c=as,
 \qquad b=a^2.
 \label{eq:nested-parameters}
\end{equation}
Thus \(a=b+c\) and \(s+c=1-b\).  Suppose first that \(d\ge2\).
Choose
\begin{equation}
 0<\varepsilon_1<a,
 \qquad
 0<\varepsilon_i<b\quad(2\le i<d).
 \label{eq:nested-epsilon-range}
\end{equation}
On the rare states \(\{1,\ldots,d\}\), define a killed kernel \(Q\) by
\begin{equation}
 Q_{ii}=s\quad(1\le i\le d),
 \qquad Q_{i,i+1}=\varepsilon_i\quad(1\le i<d),
 \qquad Q_{i,i-1}=c\quad(2\le i\le d),
 \label{eq:nested-killed-kernel}
\end{equation}
with unlisted entries zero.  Its killing probabilities are
\begin{equation}
 \kappa_1=a-\varepsilon_1,
 \qquad
 \kappa_i=b-\varepsilon_i\quad(2\le i<d),
 \qquad
 \kappa_d=b.
 \label{eq:nested-killing}
\end{equation}
Set
\begin{equation}
 \mu_1=c,
 \qquad
 \mu_i=c^{2-i}\prod_{h<i}\varepsilon_h\quad(2\le i\le d),
 \qquad
 r_i=\mu_i\kappa_i,
 \qquad
 R=\sum_{i=1}^d r_i.
 \label{eq:nested-rare-masses}
\end{equation}
For sufficiently small \(\eta>0\), define a kernel on
\(\{0,1,\ldots,d\}\) by
\begin{equation}
 \begin{alignedat}{2}
 P_{\eta,\varepsilon,\rho}(0,0)&=1-\eta R,
 &\qquad P_{\eta,\varepsilon,\rho}(0,i)&=\eta r_i,\\
 P_{\eta,\varepsilon,\rho}(i,0)&=\kappa_i,
 &P_{\eta,\varepsilon,\rho}(i,j)&=Q_{ij}.
 \end{alignedat}
 \label{eq:nested-full-kernel}
\end{equation}
The identities
\begin{equation}
 \mu_i\varepsilon_i=\mu_{i+1}c,
 \qquad
 1\cdot\eta r_i=(\eta\mu_i)\kappa_i
 \label{eq:nested-detailed-balance}
\end{equation}
show that the unnormalized reversible measure is
\begin{equation}
 (1,\eta\mu_1,\ldots,\eta\mu_d).
 \label{eq:nested-stationary-law}
\end{equation}
All rare states have a positive edge to the root, and every upward edge is
positive, so the kernel is irreducible.

When \(d=1\), there is no \(\varepsilon\)-parameter.  We take
\(Q_{11}=s\), \(\kappa_1=a\), \(\mu_1=c\), and use
\eqref{eq:nested-full-kernel}.  Thus
\begin{equation}
 P_{\eta,\rho}
 =\begin{pmatrix}
  1-\eta ca&\eta ca\\
  a&s
 \end{pmatrix},
 \label{eq:nested-two-state-kernel}
\end{equation}
with reversible measure \((1,\eta c)\).

Write $Q_{\varepsilon,\rho}=Q$.  At \(\eta=0\), the matrix is reducible,
so we do not assign it an
irreducible-chain gap.  Instead define the limiting gap parameter
\begin{equation}
 \gamma_0(\varepsilon,\rho)
 =1-\max\sigma(Q_{\varepsilon,\rho}).
 \label{eq:nested-limiting-gap}
\end{equation}
The killed block is reversible and has real spectrum.  As
\(\max_i\varepsilon_i\to0\), it tends to a lower triangular matrix with
diagonal \(s\), whereas finite-matrix continuity gives
\(\Gap(P_{\eta,\varepsilon,\rho})\to
\gamma_0(\varepsilon,\rho)\) as \(\eta\downarrow0\).  Hence, for each
fixed \(\rho\),
\begin{equation}
 \lim_{\varepsilon\to0}\lim_{\eta\downarrow0}
 \Gap(P_{\eta,\varepsilon,\rho})=1-s=a,
 \label{eq:nested-base-gap}
\end{equation}
where \(\varepsilon\to0\) means
\(\max_i\varepsilon_i\to0\).  Joint matrix continuity also permits the two
small parameters to be chosen successively as in the finite diagonal below.
For \(d=1\), the nonconstant eigenvalue in
\eqref{eq:nested-two-state-kernel} is \(s-\eta ca\), and the same conclusion
holds directly.

\paragraph{Parameters and order of limits.}
For the family above, write \(\mu_n^{\eta,\varepsilon,\rho}\) and
\(\widetilde P_n^{\eta,\varepsilon,\rho}\) for the count law and induced
count kernel.  For a nonconstant test \(f\), set
\[
 \mathcal R_n^{\eta,\varepsilon,\rho}(f)
 =n\frac{\cE_{\widetilde P_n^{\eta,\varepsilon,\rho}}(f,f)}
          {\Var_{\mu_n^{\eta,\varepsilon,\rho}}(f)}.
\]
All limits \(\varepsilon\to0\) mean
\(\max_i\varepsilon_i\to0\).  From inner to outer, the limiting argument
is ordered as
\[
 \eta\downarrow0,\qquad n\to\infty,\qquad
 \varepsilon\to0,\qquad x_d\uparrow\rho^{-1},\qquad \rho\uparrow1.
\]
The finite diagonal below chooses the parameters in the reverse order.
When $d=1$, $\varepsilon$ denotes the empty tuple and
$\gamma_0(\varepsilon,\rho)=1-s$.

\subsection{The limiting rare-excursion form}

We identify a count function with a function of the rare coordinates
\(k=(k_1,\ldots,k_d)\); the root count is then
\(n-\sum_i k_i\).  We use finitely supported functions satisfying
\begin{equation}
 f(k_1,\ldots,k_{d-1},0)=0.
 \label{eq:nested-zero-boundary-test}
\end{equation}
For \(d\ge2\), put
\begin{equation}
 E=\prod_{i=1}^{d-1}\varepsilon_i,
 \label{eq:nested-common-factor}
\end{equation}
and take \(E=1\) when \(d=1\).  Define
\begin{equation}
 \tau(z)=\frac{z}{1-sz},
 \qquad
 \kappa_1^0=a,
 \qquad
 \kappa_i^0=b\quad(i\ge2),
 \label{eq:nested-tau-kappa}
\end{equation}
and
\begin{equation}
 V_d(z_1,\ldots,z_{d-1})
 =\sum_{p=1}^d c^{d-p}\kappa_p^0
   \prod_{i=p}^{d-1}\tau(z_i).
 \label{eq:nested-Vd}
\end{equation}
Empty products are one; in particular, \(V_1=a\).  Equivalently,
\begin{equation}
 V_1=a,
 \qquad
 V_j=b+c\tau(z_{j-1})V_{j-1}\quad(j\ge2).
 \label{eq:nested-V-recursion}
\end{equation}

\begin{lemma}[Leading mass and conductances]
\label{lem:nested-leading-form}
There are nonnegative coefficients \(M_k,H_{j,k}\), and \(B_{k'}\), where
\(k'=(k_1,\ldots,k_{d-1})\), with generating functions
\begin{align}
 \sum_{k_d\ge1}M_kz^k
 &=c^{2-d}\tau(z_d)V_d^2,
 \label{eq:nested-mass-gf}\\
 \sum_{k_d\ge1}H_{j,k}z^k
 &=c^{2-j}V_d\tau(z_d)
   \frac{\prod_{i=j+1}^{d-1}\tau(z_i)}{1-sz_j},
 &&1\le j<d,
 \label{eq:nested-horizontal-gf}\\
 \sum_{k_d\ge1}H_{d,k}z^k
 &=s c^{2-d}V_d\tau(z_d),
 \label{eq:nested-deep-gf}\\
 \sum_{k'}B_{k'}z^{k'}
 &=c^{2-d}V_d.
 \label{eq:nested-boundary-gf}
\end{align}
Here \(z^k=\prod_i z_i^{k_i}\), and every \(V_d\) is evaluated at
\((z_1,\ldots,z_{d-1})\).  If
\begin{align}
 \|f\|_M^2&=\sum_{k_d\ge1}M_kf(k)^2,
 \label{eq:nested-mass-norm}\\
 \cE_d(f)&=\sum_{j=1}^d\sum_{k_d\ge1}
 H_{j,k}\bigl(f(k+e_j)-f(k)\bigr)^2
 +\sum_{k'}B_{k'}f(k',1)^2,
 \label{eq:nested-leading-energy}
\end{align}
then every finitely supported \(f\) satisfying
\eqref{eq:nested-zero-boundary-test} and \(\|f\|_M>0\) obeys
\begin{equation}
 \lim_{\varepsilon\to0}\lim_{n\to\infty}\lim_{\eta\downarrow0}
 n\,
 \frac{\cE_{\widetilde P_n}(f,f)}{\Var(f(K_0))}
 =\frac{\cE_d(f)}{\|f\|_M^2}.
 \label{eq:nested-sequential-limit}
\end{equation}
For every sufficiently small positive $\varepsilon$, the two inner limits
exist, with positive denominators for all sufficiently large $n$ and all
sufficiently small $\eta>0$.  For \(d=1\), the first limit is absent.
\end{lemma}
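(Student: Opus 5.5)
The plan is to compute both the Dirichlet form and the variance to first order in $\eta$ by decomposing the stationary path into root sojourns and rare excursions. Then let $n\to\infty$, and finally extract the common factor $E$ and send $\varepsilon\to0$.

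\textbf{Step 1: expansion in $\eta$.} Under \eqref{eq:nested-stationary-law}, the stationary law puts mass $1-O(\eta)$ on the root. A rare excursion starts from $0$ at a given time with probability $\eta R$. It is then a $Q$-path started from level $i$ with probability $r_i/R$, and it ends at the killing step. A length-$n$ window meets two or more excursions with probability $O(\eta^2n^2)$, which vanishes in the inner limit $\eta\downarrow0$ at fixed $n$. To first order we may therefore suppose the window meets at most one excursion, with word $w$ and count vector $k(w)$.

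\begin{itemize}
\item If the window meets no excursion, then $K_0$ is the all-root vector. Every $f$ under consideration vanishes there, because of \eqref{eq:nested-zero-boundary-test} together with finite support.
\item If $w$ lies entirely inside the window, this happens at $n-|w|+1$ positions. Such windows contribute $\eta\,(n-|w|+1)\,\mathrm{wt}(w)\,f(k(w))^2$ to $\E f(K_0)^2$, where $\mathrm{wt}(w)$ is the excursion weight.
\item Windows that only partly overlap $w$ contribute $O(\eta|w|)$, with no factor of $n$.
\end{itemize}

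Since $(\E f)^2=O(\eta^2n^2)$, we get
\[
\Var f(K_0)=\eta n\sum_w \mathrm{wt}(w)f(k(w))^2+O(\eta)+o(\eta).
\]
For the Dirichlet form, one slide changes the count only when an endpoint of the window crosses a letter of an excursion. Each excursion produces $O(|w|)$ such edges. So $\cE_{\widetilde P_n}(f,f)=\eta\,\mathcal E^{(\varepsilon,\rho)}(f)+o(\eta)$, where $\mathcal E^{(\varepsilon,\rho)}$ does not depend on $n$ once $n$ exceeds the support diameter of $f$. The edges that matter are those where the window slides onto or off $w$. In these, coordinates are added or removed one letter at a time in path order, giving increments $\pm e_j$. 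The transition between $f(k',0)$ and $f(k',1)$ is exactly where the deep coordinate first appears. In the ratio $n\cE/\Var$ the factors of $\eta$ cancel, and the $O(\eta)$ boundary terms in $\Var$ are killed by the factor $1/n$. This gives the two inner limits, with positive denominators whenever the excursion mass functional is positive. That functional is positive for small $\varepsilon>0$ whenever $\|f\|_M>0$.

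\textbf{Step 2: the limit $\varepsilon\to0$ and the generating functions.} The test $f$ is nonzero only when $k_d\ge1$, so the relevant excursions must reach level $d$. Each upward step $i\to i+1$ costs a factor $\varepsilon_i$. Entering at level $p$ costs $r_p=\mu_p\kappa_p$, which carries $\prod_{h<p}\varepsilon_h$. Hence every relevant weight is $E\cdot(1+o(1))$ times a weight in which each level is climbed exactly once. Re-climbing would cost an extra factor $\varepsilon$, and $\kappa_i\to\kappa_i^0$. I divide numerator and denominator by $E$ and evaluate the leading weights by path-decomposition generating functions:
\begin{itemize}
\item a sojourn at level $i$ consisting of self-loops, marked by $z_i$, contributes $\tau(z_i)=z_i/(1-sz_i)$, or $1/(1-sz_i)$ when the entrance letter is counted separately;
\item the descent from $d$ to the exit level $p$ uses $d-p$ steps of weight $c$ and then the killing probability $\kappa_p^0$.
\end{itemize}
Summing over $p$ yields $V_d$ as in \eqref{eq:nested-Vd}. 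The ascent is the time reversal of a descent by detailed balance, so it yields a second factor $V_d$ up to the power of $c$ supplied by $\mu_i$. This gives \eqref{eq:nested-mass-gf}. In the same way, marking the position where the window boundary adds a letter of type $j$ gives \eqref{eq:nested-horizontal-gf}: when $j<d$, the deeper levels $i>j$ are already complete and contribute $\prod_{i>j}\tau(z_i)$, while level $j$ is only partially read, which gives the factor $1/(1-sz_j)$. Marking a deep self-loop gives \eqref{eq:nested-deep-gf}. Marking the first appearance of the deep letter gives \eqref{eq:nested-boundary-gf}.

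\textbf{Main obstacle.} I expect the bookkeeping of window--excursion crossings in Step 2 to be the hardest part. Each crossing edge must be assigned to exactly one of the $H_{j,k}$ or $B_{k'}$ coefficients. Crossings at both ends of the window, entering and leaving, must be matched through reversal, and this is what produces the single factor $V_d$ in the conductances. I would first carry out the case $d=1$ by hand, where $V_1=a$ and everything is explicit, and then induct on $d$ using \eqref{eq:nested-V-recursion}. The $o(1)$ error terms, namely multiple excursions, excursions that re-climb, and partial overlaps, are each controlled by an explicit extra factor of $\eta$, $\varepsilon$, or $1/n$ taken in the stated order of limits. I regard this error control as routine.
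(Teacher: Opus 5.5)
Your proposal is correct and follows essentially the same route as the paper. It uses a first-order expansion in $\eta$ through the single-excursion decomposition, and placement counting that yields $nM_\varepsilon(f)+O(1)$ for the variance and an $n$-stable energy coefficient. It then divides by $E=\prod_{i<d}\varepsilon_i$ and evaluates the leading mountain words by holding-block generating functions. Your remark that the energy coefficient is exactly $n$-independent once $n$ exceeds the total-count support of $f$ is valid, and slightly sharper than the paper's $\theta^n$ bound.

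One clarification: the single factor $V_d$ in the conductances comes from the prefix weights already summing over all unobserved continuations of the excursion (the identity $\sum_{q\ge0}Q^q\kappa=\one$). Reversal of the two window orientations does something else: it cancels the factor $\tfrac12$ in the Dirichlet form.
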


\begin{proof}
Fix \(\rho\), fix positive \(\varepsilon_i\) in
\eqref{eq:nested-epsilon-range}, and write
\begin{equation}
 U_\varepsilon=\sum_{i=1}^d\mu_i,
 \qquad
 \kappa_*=\min_{1\le i\le d}\kappa_i,
 \qquad
 \theta=1-\kappa_*<1.
 \label{eq:nested-tail-parameters}
\end{equation}
The normalized stationary law in \eqref{eq:nested-stationary-law} is
\begin{equation}
 \pi_\eta(0)=\frac1{1+\eta U_\varepsilon},
 \qquad
 \pi_\eta(i)=\frac{\eta\mu_i}{1+\eta U_\varepsilon}.
 \label{eq:nested-normalized-stationary-law}
\end{equation}
All estimates up to the final \(\varepsilon\)-pass are made with these
parameters fixed.  Moreover, when \(d\ge2\) and \(\rho\) is fixed, the tail
estimates are uniform for \(0<\varepsilon_1\le a/2\) and
\(0<\varepsilon_i\le b/2\), \(2\le i<d\).  Only the closures of these
half-open boxes are used in the final coefficientwise
\(\varepsilon\)-limit; for \(d=1\) there is no such parameter.

\smallskip
\noindent\emph{Geometric-tail sublemma.}
Let \(L\) be the lifetime of the killed chain with kernel \(Q\), including
its initial rare state.  Since every row sum of \(Q\) is at most \(\theta\),
\begin{equation}
 \Pp_i(L\ge h)=(Q^{h-1}\one)(i)\le\theta^{h-1},
 \qquad h\ge1.
 \label{eq:nested-excursion-tail}
\end{equation}
For a complete rare word \(w=(x_1,\ldots,x_\ell)\), and for a rare prefix
\(u=(x_1,\ldots,x_h)\), put
\begin{align}
 A_\varepsilon(w)
 &=r_{x_1}\prod_{t<\ell}Q_{x_t,x_{t+1}}\kappa_{x_\ell},
 \label{eq:nested-word-weight}\\
 C_\varepsilon(u)
 &=r_{x_1}\prod_{t<h}Q_{x_t,x_{t+1}}.
 \label{eq:nested-prefix-weight}
\end{align}
Thus \(A_\varepsilon(w)\) is the coefficient, after removal of the one
factor \(\eta\), of the complete excursion \(0,w,0\), whereas
\(C_\varepsilon(u)\) already sums over every continuation after the
observed prefix.  To verify the latter statement, write
\(\kappa=(\kappa_1,\ldots,\kappa_d)^\mathsf T=(I-Q)\one\).  The geometric
tail gives \(Q^q\one\to0\), and hence
\begin{equation}
 \sum_{q\ge0}Q^q\kappa
 =\sum_{q\ge0}Q^q(I-Q)\one=\one.
 \label{eq:nested-continuation-sum}
\end{equation}
Consequently, for every prefix \(u\),
\begin{equation}
 \sum_{\substack{w:\ u\text{ is a prefix of }w}}
 A_\varepsilon(w)=C_\varepsilon(u).
 \label{eq:nested-prefix-marginalization}
\end{equation}
If \(|w|\) and \(|u|\) denote word length, then
\begin{align}
 \sum_{|w|\ge h}A_\varepsilon(w)
 &\le R\theta^{h-1},
 \label{eq:nested-complete-word-tail}\\
 \sum_{|u|\ge h}C_\varepsilon(u)
 &\le \frac{R}{1-\theta}\theta^{h-1}.
 \label{eq:nested-prefix-tail}
\end{align}
Indeed, the first estimate is \eqref{eq:nested-excursion-tail} averaged
with the entrance weights \(r_i\); the second follows by summing
\(\sum_i r_i(Q^{q-1}\one)(i)\) over \(q\ge h\).  This proves the sublemma
and also justifies every rearrangement of the nonnegative word sums below.

Write \(k(w)\) for the rare-state count vector of a word, and let \(u^-\)
denote the count vector obtained from \(k(u)\) by deleting the last letter
of \(u\).  Equivalently, if the last letter is \(j\), then
\(u^-=k(u)-e_j\).  For fixed \(\varepsilon\), define
\begin{align}
 M_k^{(\varepsilon)}
 &=\sum_{k(w)=k}A_\varepsilon(w),
 \label{eq:nested-fixed-epsilon-mass}\\
 H_{j,k}^{(\varepsilon)}
 &=\sum_{\substack{u_h=j,\ u^-=k}}C_\varepsilon(u),
 \qquad k_d\ge1,
 \label{eq:nested-fixed-epsilon-horizontal}\\
 B_{k'}^{(\varepsilon)}
 &=\sum_{\substack{u_h=d,\ u^-=(k',0)}}C_\varepsilon(u).
 \label{eq:nested-fixed-epsilon-boundary}
\end{align}
For a finitely supported \(f\) satisfying
\eqref{eq:nested-zero-boundary-test}, set
\begin{align}
 M_\varepsilon(f)
 &=\sum_{k_d\ge1}M_k^{(\varepsilon)}f(k)^2,
 \label{eq:nested-fixed-epsilon-mass-form}\\
 D_\varepsilon(f)
 &=\sum_{j=1}^d\sum_{k_d\ge1}
 H_{j,k}^{(\varepsilon)}\bigl(f(k+e_j)-f(k)\bigr)^2
   +\sum_{k'}B_{k'}^{(\varepsilon)}f(k',1)^2.
 \label{eq:nested-fixed-epsilon-energy-form}
\end{align}
Only finitely many words enter these two forms: the support of \(f\),
together with its nearest neighbours, has bounded total count.  The tail
bounds above are nevertheless useful for the endpoint words before that
support restriction is imposed.

We next take \(\eta\downarrow0\) with \(n\) fixed.  Expanding stationary
path cylinders according to the number of transitions from the root into
the rare set gives finite coefficients \(V_{n,\varepsilon}(f)\) and
\(D_{n,\varepsilon}(f)\) such that
\begin{align}
 \E f(K_0)^2
 &=\eta V_{n,\varepsilon}(f)+O_{n,\varepsilon,f}(\eta^2),
 \label{eq:nested-second-moment-expansion}\\
 \cE_{\widetilde P_n}(f,f)
 &=\eta D_{n,\varepsilon}(f)+O_{n,\varepsilon,f}(\eta^2).
 \label{eq:nested-energy-eta-expansion}
\end{align}
Here and below an excursion already in progress at the left endpoint is
counted using \eqref{eq:nested-normalized-stationary-law}; reversibility
identifies it with the corresponding reversed suffix.  A nonzero value of
\(f(K_0)\) requires at least one visit to state \(d\).  Consequently
\begin{equation}
 |\E f(K_0)|
 \le \|f\|_\infty\Pp(K_{0,d}\ge1)
 \le \|f\|_\infty n\pi_\eta(d)
 =O_{n,\varepsilon,f}(\eta),
 \label{eq:nested-mean-bound}
\end{equation}
and the square of the mean has no first-order \(\eta\)-term.  Hence
\begin{equation}
 \lim_{\eta\downarrow0}
 \frac{\Var(f(K_0))}{\eta}=V_{n,\varepsilon}(f),
 \qquad
 \lim_{\eta\downarrow0}
 \frac{\cE_{\widetilde P_n}(f,f)}{\eta}
 =D_{n,\varepsilon}(f).
 \label{eq:nested-fixed-window-eta-limits}
\end{equation}

We make the subsequent \(n\)-limit explicit.  Let \(L_f\) exceed the
largest total count at which \(f\) is nonzero.  A complete excursion word
\(w\) of length \(\ell\le L_f\) has exactly
\((n-\ell-1)_+\) placements with both adjacent root letters inside the
window.  Excursions cut by the window form two endpoint families (including
the case in which both endpoints cut the same word), rather than two single
terms.  For a word of length \(\ell\), at most \(2(\ell+1)\) alignments
belong to these families.  Thus the first-order coefficient has the
placement decomposition
\begin{equation}
 V_{n,\varepsilon}(f)
 =\sum_w(n-|w|-1)_+A_\varepsilon(w)f(k(w))^2
   +E_{n,\varepsilon}(f),
 \qquad
 0\le E_{n,\varepsilon}(f)
 \le2\|f\|_\infty^2\sum_w(|w|+1)A_\varepsilon(w).
 \label{eq:nested-variance-placement}
\end{equation}
The last series is finite by
\eqref{eq:nested-complete-word-tail}.  Since only words of length at most
\(L_f\) survive in the first sum, it follows that
\begin{equation}
 V_{n,\varepsilon}(f)
 =nM_\varepsilon(f)+O_{\varepsilon,f}(1).
 \label{eq:nested-variance-coefficient}
\end{equation}

For the energy, use the pathwise identity
\begin{equation}
 K_1-K_0=e_{X_n}-e_{X_0}.
 \label{eq:nested-count-increment}
\end{equation}
To first order in \(\eta\), if exactly one endpoint is the root, the rare
endpoint is the last letter \(j\) of a prefix \(u\), and the count edge is
\(u^-\leftrightarrow u^-+e_j\).  The orientation with \(X_0=0\) has
coefficient \(C_\varepsilon(u)\).  Reversibility gives the opposite
orientation with the same coefficient, so the two orientations cancel the
factor \(1/2\) in the Dirichlet form.  The cases \((u^-)_d\ge1\) and
\((u^-)_d=0\) are exactly the \(H^{(\varepsilon)}\)- and
\(B^{(\varepsilon)}\)-terms in
\eqref{eq:nested-fixed-epsilon-energy-form}; in the latter case a nonzero
edge requires \(j=d\) by \eqref{eq:nested-zero-boundary-test}.

There are two remaining possibilities.  If the two rare endpoints belong
to different excursions, the path contains two root-to-rare transitions
and contributes only \(O_{n,\varepsilon,f}(\eta^2)\) in
\eqref{eq:nested-energy-eta-expansion}.  If they belong to the same
excursion, that excursion survives across the entire window; this is a
first-order event, but it is geometrically small.  More explicitly,
\begin{equation}
 \Pp_\eta\bigl(X_0\text{ and }X_n\text{ lie in the same rare excursion}\bigr)
 =\frac{\eta}{1+\eta U_\varepsilon}
   \sum_{i=1}^d\mu_i(Q^n\one)(i)
 \le\eta U_\varepsilon\theta^n.
 \label{eq:nested-spanning-excursion}
\end{equation}
Hence its energy coefficient is at most
\(4\|f\|_\infty^2U_\varepsilon\theta^n\).  Prefixes longer than the window
have the same geometric bound by \eqref{eq:nested-prefix-tail}.  Therefore
\begin{equation}
 |D_{n,\varepsilon}(f)-D_\varepsilon(f)|
 \le C_{\varepsilon,f}\theta^n.
 \label{eq:nested-energy-n-limit}
\end{equation}
Combining \eqref{eq:nested-fixed-window-eta-limits},
\eqref{eq:nested-variance-coefficient}, and
\eqref{eq:nested-energy-n-limit} gives
\begin{equation}
 \lim_{n\to\infty}\lim_{\eta\downarrow0}
 n\frac{\cE_{\widetilde P_n}(f,f)}{\Var(f(K_0))}
 =\frac{D_\varepsilon(f)}{M_\varepsilon(f)},
 \label{eq:nested-two-inner-limits}
\end{equation}
whenever \(M_\varepsilon(f)>0\).  Because the limits are iterated, uniformity
in $n$ is not required: $\eta$ is sent to zero first for each fixed $n$.

It remains to identify the leading \(\varepsilon\)-coefficient.  For a
fixed count vector, the sums in
\eqref{eq:nested-fixed-epsilon-mass}--\eqref{eq:nested-fixed-epsilon-boundary}
are finite, hence polynomial in
\((\varepsilon_1,\ldots,\varepsilon_{d-1})\).  Suppose a word enters the
rare set at level \(p\) and reaches level \(d\).  Its entrance weight
contains \(\prod_{i<p}\varepsilon_i\), and reaching \(d\) forces one
upward crossing of every cut \(i\to i+1\), \(p\le i<d\).  Thus every
coefficient relevant to \(f\) is divisible by
\(E=\prod_{i<d}\varepsilon_i\).  Equality in total degree permits exactly
one upward crossing of each remaining cut.  Before the first visit to
\(d\), the word therefore holds or moves monotonically upward; afterwards
it holds or moves monotonically downward.  The terms
\(-\varepsilon_i\) in the killing probabilities have strictly higher
degree.  The leading complete words are consequently the mountains
\begin{equation}
 p\nearrow p+1\nearrow\cdots\nearrow d
 \searrow d-1\searrow\cdots\searrow q,
 \label{eq:nested-mountain-word}
\end{equation}
with arbitrary positive holding blocks.

A positive holding block at level \(i\) has generating function
\(\tau(z_i)\).  For fixed entrance and exit levels \((p,q)\), the leading
transition and killing coefficient after division by \(E\) is
\begin{equation}
 c^{2-p}\kappa_p^0\,c^{d-q}\kappa_q^0.
 \label{eq:nested-mountain-coefficient}
\end{equation}
Summing independently over \(p\) and \(q\) gives
\eqref{eq:nested-mass-gf}.  For a prefix edge adding a visit at level
\(j<d\), the new endpoint either creates the terminal visit at \(j\) after
the descent from \(j+1\), or extends a holding block already present at
\(j\).  After the endpoint is deleted, these two cases contribute the
factor
\begin{equation}
 1+s\tau(z_j)=\frac1{1-sz_j},
 \label{eq:nested-holding-identity}
\end{equation}
and give \eqref{eq:nested-horizontal-gf}.  If \(j=d\) and the deleted
prefix still contains a visit to \(d\), the endpoint extends the deepest
holding block and gives \eqref{eq:nested-deep-gf}.  If it is the first
visit to \(d\), deletion lands on the zero boundary and gives
\eqref{eq:nested-boundary-gf}.  These are the only leading cases.  In
particular, after a prefix has already visited \(d\), an endpoint reached
from \(j-1\) would use an additional upward transition
\(Q_{j-1,j}=\varepsilon_{j-1}\); after division by \(E\) it has positive
remaining \(\varepsilon\)-degree and vanishes at the full origin.  For
\(j=d\), the conditions ``a previous visit to \(d\) remains after
deletion'' and ``the endpoint is the first visit to \(d\)'' are disjoint,
so the \(H_d\)- and \(B\)-coefficients do not overlap.

We have thus proved, coefficient by coefficient on the finite support of
\(f\),
\begin{equation}
 \left(\frac{M_\varepsilon(f)}E,
       \frac{D_\varepsilon(f)}E\right)
 \longrightarrow
 \bigl(\|f\|_M^2,\cE_d(f)\bigr)
 \qquad(\max_i\varepsilon_i\to0).
 \label{eq:nested-epsilon-layer}
\end{equation}
There are no ratios \(\varepsilon_i/\varepsilon_j\): divisibility by the
full monomial \(E\) makes both quotients polynomial and therefore
continuous at the full origin.  Since \(\|f\|_M>0\), the denominator in
\eqref{eq:nested-two-inner-limits} is positive for all sufficiently small
\(\varepsilon\).  For each such \(\varepsilon\),
\eqref{eq:nested-variance-coefficient} gives
\(V_{n,\varepsilon}(f)>0\) for all sufficiently large \(n\); for each
such fixed \(n\), \eqref{eq:nested-second-moment-expansion} then gives
\(\Var(f(K_0))>0\) for all sufficiently small \(\eta>0\).  Taking the ratio in
\eqref{eq:nested-epsilon-layer} proves
\eqref{eq:nested-sequential-limit}.  When \(d=1\), the same fixed-\(\eta\)
and fixed-\(n\) argument applies with \(E=1\), and the final
\(\varepsilon\)-pass is absent.
\end{proof}

\subsection{Product tests and the scalar recursion}

For \(0<x_i<s^{-1/2}\), take the product test
\begin{equation}
 f_x(k)=\prod_{i=1}^d x_i^{k_i}\quad(k_d\ge1),
 \qquad f_x(k',0)=0.
 \label{eq:nested-product-test}
\end{equation}
Put
\begin{equation}
 D_i=1-sx_i^2,
 \qquad
 t_i=\frac{x_i^2}{D_i},
 \qquad
 V=V_d(x_1^2,\ldots,x_{d-1}^2).
 \label{eq:nested-product-notation}
\end{equation}
The last equality specifies the evaluation point of \(V_d\).  Evaluating
\eqref{eq:nested-mass-gf}--\eqref{eq:nested-boundary-gf} gives
\begin{equation}
 \|f_x\|_M^2=c^{2-d}t_dV^2.
 \label{eq:nested-product-mass}
\end{equation}
For \(j<d\), the \(j\)-edge energy is
\begin{equation}
 c^{2-j}Vt_d
 \frac{\prod_{i=j+1}^{d-1}t_i}{D_j}(x_j-1)^2.
 \label{eq:nested-product-horizontal-energy}
\end{equation}
The deepest interior and boundary energies are, respectively,
\begin{equation}
 s c^{2-d}Vt_d(x_d-1)^2,
 \qquad
 c^{2-d}Vx_d^2.
 \label{eq:nested-product-deep-boundary}
\end{equation}
Since \(x_d^2/t_d=D_d\), division by
\eqref{eq:nested-product-mass} yields the exact quotient
\begin{equation}
 \mathcal Q_{d,\rho}(x)
 =\frac1V\left[
 1+s-2sx_d
 +\sum_{j=1}^{d-1}c^{d-j}(x_j-1)^2
   \frac{\prod_{i=j+1}^{d-1}t_i}{D_j}
 \right].
 \label{eq:nested-product-quotient}
\end{equation}

The product at the critical value \(x_d=1/\rho\) is not square summable.
It is used only as a limiting test.  For a strictly subcritical
\(x_d<1/\rho\),
all mass and conductance series above are finite.  Let
$\Lambda_N=\{0,\ldots,N\}^{d-1}\times\{1,\ldots,N\}$ and set
$f_x^{(N)}=f_x\one_{\Lambda_N}$.  The mass, the boundary term, and the
energies of edges contained in $\Lambda_N$ converge by monotone
convergence.  Put
$\partial_j\Lambda_N=\{k\in\Lambda_N:k+e_j\notin\Lambda_N\}$, the
basepoints of the outgoing $j$-edges crossing the upper truncation face, and
write
$\cE_d(f_x^{(N)};\partial\Lambda_N)$ for the terms of
\eqref{eq:nested-leading-energy} supported on these edges.  This additional
face energy is bounded by
\begin{equation}
 \cE_d\bigl(f_x^{(N)};\partial\Lambda_N\bigr)
 \le2\sum_{j=1}^d(1+x_j^2)
       \sum_{k\in\partial_j\Lambda_N}H_{j,k}f_x(k)^2
 \longrightarrow0.
 \label{eq:nested-truncation-tail}
\end{equation}
Indeed, each full series on the right is finite by
\eqref{eq:nested-horizontal-gf}--\eqref{eq:nested-deep-gf} evaluated at
$z_i=x_i^2$, while the boundary sets escape every finite rectangle.  Thus
finite-support tests approach \eqref{eq:nested-product-quotient}, after
which one may send \(x_d\uparrow1/\rho\).  At that boundary,
\begin{equation}
 1+s-2s/\rho=(1-\rho)^2.
 \label{eq:nested-critical-deep-term}
\end{equation}

We now let \(a=1-s\downarrow0\).  For \(j<d\), set
\begin{equation}
 x_j=1+ay_j,
 \qquad
 0\le y_j<\frac12,
 \qquad
 w_j=1-2y_j.
 \label{eq:nested-shallow-scaling}
\end{equation}
For fixed \(d\) and fixed \(y\),
\begin{equation}
 D_j=aw_j+O(a^2),
 \qquad
 t_j=(aw_j)^{-1}+O(1),
 \qquad
 c=a+O(a^2),
 \qquad
 (1-\rho)^2=\frac{a^2}{4}+O(a^3).
 \label{eq:nested-scaling-expansions}
\end{equation}
The \(p=1\) term in \eqref{eq:nested-Vd} has order \(a\), while every
\(p\ge2\) term has order \(a^2\).  Hence
\begin{equation}
 V=\frac{a}{\prod_{j=1}^{d-1}w_j}+O(a^2).
 \label{eq:nested-V-asymptotic}
\end{equation}
Substitution into \eqref{eq:nested-product-quotient}, with
\(x_d\uparrow1/\rho\), gives
\begin{equation}
 \frac{\mathcal Q_{d,\rho}(x)}a
 \longrightarrow
 \Phi_{d-1}(y_1,\ldots,y_{d-1}),
 \label{eq:nested-Phi-limit}
\end{equation}
where
\begin{equation}
 \Phi_r(y_1,\ldots,y_r)
 =\sum_{j=1}^r y_j^2\prod_{i<j}(1-2y_i)
 +\frac14\prod_{i=1}^r(1-2y_i),
 \qquad \Phi_0=\frac14.
 \label{eq:nested-Phi}
\end{equation}
Separating the first coordinate gives
\begin{equation}
 \Phi_r(y_1,\ldots,y_r)
 =y_1^2+(1-2y_1)\Phi_{r-1}(y_2,\ldots,y_r).
 \label{eq:nested-Phi-recursion}
\end{equation}
Let
\[
 q_r=\inf_{(y_1,\ldots,y_r)\in[0,1/2)^r}
      \Phi_r(y_1,\ldots,y_r).
\]
Then $q_0=1/4$.  For fixed $y_1$, the factor $1-2y_1$ is strictly
positive.  Hence optimizing over the remaining coordinates
$(y_2,\ldots,y_r)$ in \eqref{eq:nested-Phi-recursion} gives
\begin{equation}
 \inf_{0\le y_1<1/2}\{y_1^2+(1-2y_1)q_{r-1}\}.
 \label{eq:nested-one-variable-minimization}
\end{equation}
Inductively, \(0<q_{r-1}\le1/4<1/2\), so the unconstrained minimizer lies in
the allowed interval.  Its value at \(y=q_{r-1}\) is
\(q_{r-1}(1-q_{r-1})\).  This proves
\eqref{eq:nested-q-recursion}; the minimizing coordinates are
\begin{equation}
 (y_1,\ldots,y_r)=(q_{r-1},q_{r-2},\ldots,q_0).
 \label{eq:nested-Phi-minimizer}
\end{equation}
Together with \eqref{eq:nested-base-gap}, the limiting normalized quotient
is \(q_{d-1}=q_{m-2}\).  The case \(d=1\) follows from the
same construction rather than from an external two-state formula.  Here
\begin{equation}
 \sum_{k\ge1}M_kz^k=ca^2\tau(z),\qquad
 \sum_{k\ge1}H_{1,k}z^k=sca\tau(z),\qquad B=ca,
 \label{eq:nested-d1-form}
\end{equation}
and for \(f(k)=x^k\), \(k\ge1\), with \(f(0)=0\),
\begin{equation}
 \mathcal Q_{1,\rho}(x)=\frac{1+s-2sx}{a}.
 \label{eq:nested-d1-quotient}
\end{equation}
Letting \(x\uparrow1/\rho\) and then \(\rho\uparrow1\) gives
\begin{equation}
 \lim_{x\uparrow1/\rho}\frac{\mathcal Q_{1,\rho}(x)}{a}
 =\frac{(1-\rho)^2}{(1-\rho^2)^2}
 =\frac1{(1+\rho)^2}\longrightarrow\frac14=q_0.
 \label{eq:nested-d1-limit}
\end{equation}
As above, the critical product is approached by subcritical, finitely
supported tests before the finite-window diagonal is taken.

\subsection{Finite kernels and finite windows}

We now realize the limiting quotient by admissible finite kernels, finite
windows, and finitely supported tests.

The following reverse-order selection converts the iterated limiting form
into a single admissible sequence of finite kernels and finite windows.

\begin{proof}[Proof of Theorem~\ref{thm:exact-cardinality-upper}]
Fix \(m\), put \(d=m-1\), and let \(\delta_N\downarrow0\).  Make the
following choices in order.

\begin{enumerate}
\item Choose \(\rho_N\uparrow1\) and the shallow coordinates from
\eqref{eq:nested-Phi-minimizer} so that the critical product quotient,
divided by \(a_N=1-\rho_N^2\), is at most
\(q_{m-2}+\delta_N\).

\item Choose a strictly subcritical \(x_{d,N}<1/\rho_N\), and then a
finite rectangular truncation \(f_N\), so that its leading-form quotient
differs from the critical product quotient by at most
\(\delta_Na_N\), and so that \(\|f_N\|_M>0\).  Write
\begin{equation}
 \Gamma_N^{\mathrm{lead}}
 =\frac{\cE_d(f_N)}{\|f_N\|_M^2},
 \label{eq:nested-diagonal-leading-quotient}
\end{equation}
and let \(L_N\) exceed the total rare-count support of \(f_N\).

\item With \(\rho_N\) and \(f_N\) fixed, choose every
\(\varepsilon_{i,N}>0\) sufficiently small that the outer-rare,
large-window quotient
\begin{equation}
 \Gamma_N^{\varepsilon}
 =\frac{D_{\varepsilon_N}(f_N)}{M_{\varepsilon_N}(f_N)}
 \label{eq:nested-diagonal-epsilon-quotient}
\end{equation}
differs from \(\Gamma_N^{\mathrm{lead}}\) by at most
\(\delta_Na_N\), and that
\(\gamma_0(\varepsilon_N,\rho_N)\) differs from \(a_N\) by at most
\(\delta_Na_N\).  The first choice follows from
\eqref{eq:nested-epsilon-layer}; its denominator is positive for all
sufficiently small \(\varepsilon_N\).  The second follows from matrix
continuity in \eqref{eq:nested-base-gap}.  This step is absent when
\(d=1\), with \(\Gamma_N^\varepsilon=\Gamma_N^{\mathrm{lead}}\).

\item By \eqref{eq:nested-variance-coefficient} and
\eqref{eq:nested-energy-n-limit}, choose a finite \(n_N>L_N\), sufficiently
large that the test embeds, \(V_{n_N,\varepsilon_N}(f_N)>0\), and
\begin{equation}
 \left|
  n_N\frac{D_{n_N,\varepsilon_N}(f_N)}
             {V_{n_N,\varepsilon_N}(f_N)}
  -\Gamma_N^\varepsilon
 \right|\le\delta_Na_N.
 \label{eq:nested-diagonal-window-choice}
\end{equation}

\item With all preceding objects fixed, choose \(\eta_N>0\) sufficiently
small, and put
\[
 P_N=
 \begin{cases}
  P_{\eta_N,\varepsilon_N,\rho_N},&d\ge2,\\
  P_{\eta_N,\rho_N},&d=1.
 \end{cases}
\]
Let \(\mu_{N,n}\) and \(\widetilde P_{N,n}\) be, respectively, the
stationary count law and the induced count kernel for \(P_N\) and window
length \(n\).  The choice can be made so that
\begin{equation}
 \left|
  n_N\frac{\cE_{\widetilde P_{N,n_N}}(f_N,f_N)}
             {\Var_{\mu_{N,n_N}}(f_N)}
  -n_N\frac{D_{n_N,\varepsilon_N}(f_N)}
              {V_{n_N,\varepsilon_N}(f_N)}
 \right|\le\delta_Na_N,
 \label{eq:nested-diagonal-eta-choice}
\end{equation}
and $\Gap(P_N)$ differs from
$\gamma_0(\varepsilon_N,\rho_N)$ by at most $\delta_Na_N$.  The quotient
convergence follows from
\eqref{eq:nested-fixed-window-eta-limits}, whose limiting variance
coefficient is positive for this fixed \(n_N\); the gap convergence is
ordinary finite-matrix continuity.
\end{enumerate}

Define the actual scaled Rayleigh quotient
\begin{equation}
 R_N=
 n_N\frac{\cE_{\widetilde P_{N,n_N}}(f_N,f_N)}
              {\Var_{\mu_{N,n_N}}(f_N)}.
 \label{eq:nested-RN-definition}
\end{equation}
Thus every triple $(P_N,n_N,f_N)$ is admissible in the definition of
$c_m^\star$.  Since the projected gap is at most its Rayleigh quotient, the
preceding tolerances give, for all large \(N\),
\begin{equation}
 R_N\le a_N\{q_{m-2}+5\delta_N\},
 \qquad
 \Gap(P_N)\ge a_N(1-2\delta_N),
 \label{eq:nested-diagonal-errors}
\end{equation}
Therefore
\begin{equation}
 \limsup_{N\to\infty}
 \frac{n_N\Gap(\widetilde P_{N,n_N})}{\Gap(P_N)}
 \le q_{m-2}.
 \label{eq:nested-finite-diagonal-limit}
\end{equation}
This proves \eqref{eq:exact-cardinality-upper} and
\eqref{eq:nested-finite-witness}.

If entrywise positivity is desired, fix \(N\) and let \(\pi_N\) be the
stationary law of \(P_N\).  Replacing \(P_N\) by
\begin{equation}
 (1-\zeta)P_N+\zeta\Pi_N,
 \qquad
 \Pi_N(x,y)=\pi_N(y),
 \label{eq:nested-positive-perturbation}
\end{equation}
preserves reversibility and makes every entry positive.  For fixed
\(N,n_N\), the base gap and the $n_N$-scaled count Rayleigh quotient are
continuous in \(\zeta\).  Choose \(\zeta_N>0\) so that each changes by at
most \(\delta_Na_N\); the extra errors vanish in
\eqref{eq:nested-diagonal-errors}, so the same diagonal limit holds.

It remains to analyze the recursion.  Let \(Y_r=q_r^{-1}\).  Then
\begin{equation}
 Y_{r+1}=Y_r+1+\frac1{Y_r-1},
 \qquad
 Y_0=4.
 \label{eq:nested-Y-recursion}
\end{equation}
In particular, \(Y_r\ge r+4\), which gives
\(q_{m-2}\le(m+2)^{-1}\).  Summing
\eqref{eq:nested-Y-recursion} gives
\[
 Y_r=4+r+\sum_{k=0}^{r-1}\frac1{Y_k-1}.
\]
Since $Y_k\ge k+4$,
\[
 0\le\sum_{k=0}^{r-1}\frac1{Y_k-1}
 \le\sum_{k=0}^{r-1}\frac1{k+3}=O(\log r),
\]
and hence \(Y_r=r+O(\log r)\).  Consequently,
\begin{equation}
 \frac1{Y_r-1}
 =\frac1r+O\!\left(\frac{\log r}{r^2}\right).
 \label{eq:nested-Y-increment-asymptotic}
\end{equation}
Indeed,
\[
 \sum_{k=1}^{r-1}\frac1k=\log r+O(1),
 \qquad
 \sum_{k=1}^{\infty}\frac{\log k}{k^2}<\infty.
\]
Thus the error is summable, and a second summation yields
\begin{equation}
 Y_r=r+\log r+O(1),
 \qquad
 q_r=\frac1{r+\log r+O(1)}.
 \label{eq:nested-q-final-asymptotic}
\end{equation}
This proves \eqref{eq:nested-q-asymptotic} and the theorem.
\end{proof}

\begin{remark}[Scope of the construction]
\label{rem:nested-one-sided}
The construction establishes only $c_m^\star\le q_{m-2}$; the reverse
inequality is not proved.  The optimal linear order follows after combining
this obstruction with Theorem~\ref{thm:main-body}.
\end{remark}

\section{Discussion}
\label{sec:discussion}

Theorem~\ref{thm:intro-main} establishes
$c_m^\star=\Theta(m^{-1})$, or equivalently
$C_m^{\mathrm{opt}}=\Theta(m)$.  The lower and upper bounds therefore have
the same state-space exponent, while the leading constant and lower-order
asymptotics remain open.

\subsection{The two-state case}

When $m=2$, the count vector is determined by one coordinate and the
projected count kernel is birth--death.  Its gap can therefore be studied
through tail masses, edge conductances, and discrete Hardy inequalities.
These one-dimensional tools may yield sharper information, but they do not
extend directly to a count simplex of dimension at least two.

The uniform theorem does not determine $c_2^\star$: it gives
\[
 \frac1{2160}\le c_2^\star\le\frac14.
\]
Determining the exact two-state constant requires a separate
one-dimensional analysis.

\subsection{Scope and limitations}

The kernel $\widetilde P_n$ is the projected count kernel defined by the
stationary edge law in \eqref{eq:count-kernel}.  The full process $(K_t)$
need not be Markov, and the theorem does not assert strong lumpability of
the hidden path chain.

Reversibility enters at the structural points that make the estimate
uniform.  It makes the projected edge law symmetric, permits the killed
Green-kernel interlacing bound, preserves return-word weights under
reversal, and supplies the stopped reverse-time representation used in the
Carleson--Hardy estimate.  A nonreversible analogue would require a
different base-chain parameter and would likely involve both forward and
time-reversed hitting scales.

All spectral gaps in the theorem are right spectral gaps.  Negative
eigenvalues are favorable in the short-window endpoint identity, so the
lower bound does not require laziness.  This differs from
total-variation mixing, where periodicity must be treated separately.

The numerical factor $1080$ is not optimized.  The linear dependence on
$m$ comes from averaging the single-anchor inequalities before estimating
the endpoint sectors.  This cancels the Palm normalization and prevents an
additional factor of $m$ when the window energies are summed over anchor
labels.

The upper-bound witnesses form a singular limiting sequence: their smallest
stationary masses and several transition probabilities tend to zero.  The
argument proves neither attainment of the infimum by a finite parameter
choice nor a quantitative convergence rate for the finite witnesses.

\subsection{Open quantitative problems}

The absolute-constant conjecture from~\cite{XiangXinZhang2026} is false:
the bound $c_m^\star\le q_{m-2}\to0$ rules out a comparison constant
independent of the state-space size.  The remaining questions concern the
optimal $m$-dependent constant.

The first question is the exact value of $c_m^\star$.  The nested
construction gives
\[
 q_0=\frac14,
 \qquad
 q_{r+1}=q_r(1-q_r),
\]
and proves $c_m^\star\le q_{m-2}$.  It remains open whether equality holds
or whether another family produces a smaller constant.

Even without exact equality, one may ask for the leading asymptotic
constant and the first correction.  The nested family gives
$q_{m-2}^{-1}=m+\log m+O(1)$, whereas the general theorem gives only
$(c_m^\star)^{-1}\le1080m$.  Closing this constant-factor gap requires a
sharper endpoint comparison or a new obstruction below the nested
rare-state scale.

Stronger functional inequalities are another natural direction.
Logarithmic Sobolev or modified log-Sobolev estimates for the projected
count kernel would be static inequalities for the auxiliary one-step kernel
and its single-window law $\mu_n$ (most naturally in continuous time or for
a lazy version).  They would not by themselves imply multistep temporal
concentration for the generally non-Markov process $(K_t)$.  The affine
killed-excursion and common-deletion
arguments used here are quadratic, so an entropic theory would require a
replacement for the stopped Carleson--Hardy step.

Other extensions include continuous-time chains, nonstationary initial
windows with burn-in errors, nonreversible chains, and projections to
coarser statistics.  In each case, the order discarded by the projection
must be recovered through local window shifts while retaining enough
information to control the entrance, exit, and clock variables.

\end{document}